\theoremstyle{plain}
\newtheorem{thm}{Theorem}[section]
\newtheorem{lem}[thm]{Lemma}
\newtheorem{cor}[thm]{Corollary}
\theoremstyle{definition}
\newtheorem{defi}[thm]{Definition}
\newtheorem{exam}[thm]{Example}
\newtheorem{rem}[thm]{Remark}
\newcommand{\R}{\mathbb R}
\newcommand{\Z}{\mathbb Z}
\newcommand{\largetimes}{\mbox{\LARGE$\times$}}
\newcommand{\normaltimes}{\scalebox{1}{$\times$}}
\renewcommand{\@secnumfont}{\bfseries}
\def\section{%
  \@startsection{section}{1}
    {\z@}
    {2.0ex plus 0.8ex minus .1ex}
    {1.0ex plus .2ex}
    {\bfseries\large\centering\MakeUppercase}%
}
\begin{document}

\title [\ ] {A generalization of moment-angle manifolds with non-contractible orbit spaces}

\author{Li Yu}
\address{Department of Mathematics, Nanjing University, Nanjing, 210093, P.R.China.
  }
 \email{yuli@nju.edu.cn}



\subjclass[2020]{57S12 (Primary), 57N65, 57S17, 57S25}


\begin{abstract}
 We generalize the notion of moment-angle manifold over a simple convex polytope to an arbitrary nice manifold with corners.
For a nice manifold with corners $Q$,
we first compute the stable decomposition of the moment-angle manifold $\mathcal{Z}_Q$ via a construction called rim-cubicalization of $Q$. From this, we derive a formula to compute the integral cohomology group of $\mathcal{Z}_Q$  via the strata of $Q$. This generalizes the Hochster's formula for the moment-angle manifold over a simple convex polytope. 
 Moreover, we obtain a description of the integral cohomology ring of $\mathcal{Z}_Q$ using the idea of partial diagonal maps. In addition, we define the notion of polyhedral product of a sequence of based CW-complexes over $Q$ and obtain similar results for these spaces as we do for $\mathcal{Z}_Q$. 
Using this general construction, we can compute the 
equivariant cohomology ring of $\mathcal{Z}_Q$ with respect to its canonical torus action from the Davis-Januszkiewicz space of $Q$. The result leads to the definition of a new notion called the topological face ring of $Q$, which generalizes the notion of face ring of a simple polytope. Moreover, the topological face ring of $Q$ computes the equivariant cohomology of all locally standard torus actions with $Q$ as the orbit space when the corresponding principal torus bundle over $Q$ is trivial. 
Meanwhile, we obtain some parallel results for the real moment-angle manifold $\R\mathcal{Z}_Q$ over $Q$ as well. 
  \end{abstract}

\maketitle

 \section{Introduction} 
  The construction of a moment-angle manifold over a simple polytope is first introduced in
  Davis-Januszkiewicz~\cite{DaJan91}. Suppose $P$ is a simple (convex) polytope with $m$ \emph{facets} (codimension-one faces).
  A convex polytope in a Euclidean space is called \emph{simple} if every codimension-$k$ face is the intersection of exactly $k$ facets of the polytope.   
   The moment-angle manifold $\mathcal{Z}_P$ over $P$ is a closed connected manifold
  with an effective action by the compact torus $T^m=(S^1)^m$ whose orbit space is $P$. It is shown 
  in~\cite{DaJan91} that many important topological invariants of $\mathcal{Z}_P$ can be computed easily from the combinatorial structure of $P$. These manifolds play an important role in the research of toric topology. The reader is referred to Buchstaber-Panov~\cite{BP02, BP15} for more discussions on the topological and geometrical aspects of moment-angle manifolds.  
  
 The notion of moment-angle manifold over a simple convex polytope has been generalized in many different ways. For example, Davis and Januszkiewicz~\cite{DaJan91} define a class of topological spaces now called \emph{moment-angle complexes} (named by Buchstaber and Panov in~\cite{BP00}) where the simple polytope is replaced by a simple polyhedral complex. Later, L\"u and Panov~\cite{LuPanov11}
    defined the notion of moment-angle complex of a \emph{simplicial poset}. In addition, Ayzenberg and Buchstaber~\cite{AntonBuch11} defined the notion of moment-angle spaces over arbitrary convex polytopes (not necessarily simple). Note that in all these generalizations, the orbit spaces of the canonical torus actions are all contractible.    
  Yet an even wider class of spaces called
\emph{generalized moment-angle complexes} or \emph{polyhedral products}
over simplicial complexes were introduced by Bahri, Bendersky, Cohen and Gitler in~\cite{BBCG10}, which has become the major subject in the homotopy theoretic study of toric topology.
 
  In this paper, we generalize the construction of moment-angle manifolds by replacing the simple polytope $P$ by a nice manifold with corners $Q$ which is not necessarily contractible. Such a generalization has
  been considered by Poddar and Sarkar~\cite{PodSark15} for polytopes with simple holes.
  
  A motive for the study of this generalized construction is to compute the equivariant cohomology ring of locally standard torus actions. Recall that an action of a compact torus $T^n$ on a smooth compact manifold $M$ of dimension
$2n$ is called locally standard if it is locally modeled on the standard representation of $T^n$
on $\mathbb{C}^n$. Then the orbit space $Q= M\slash T^n$ is  a manifold with corners. Conversely, every manifold with a
locally standard $T^n$-action and with $Q$ as the orbit space is equivariantly homeomorphic to the quotient construction
$Y\slash \sim$, where $Y$ is a principal $T^n$-bundle over $Q$ and $\sim$ is an equivalence relation
determined by the characteristic function on $Q$ (see~\cite{Yos11}). Generally speaking, it is difficult to compute the equivariant cohomology ring of $M$ from the corresponding principal bundle $Y$ and the characteristic function on $Q$. 
But we will see in Corollary~\ref{Cor:Equiv-Cohom-Free-Quotient} that when $Y$ is the trivial $T^n$-bundle  over $Q$, the equivariant cohomology ring of $M$ can be computed from the strata of $Q$ directly. Examples of such kind include many toric origami manifolds (see~\cite{CanGuilPires11, HolmPires13, AntonMasParkZeng17}) with coorientable folding hypersurface where the faces of the orbit spaces may be non-acyclic.
 
 Recall that an \emph{$n$-dimensional manifold with corners} $Q$ is a
   Hausdorff space with a maximal
  atlas of local charts onto open subsets of $\R_{\geq 0}^n $
  such that the transitional functions are homeomorphisms which preserve the codimension of each point. Here
  the \emph{codimension} $c(x)$ of a point $x=(x_1,\cdots,x_n)$ in $\R_{\geq 0}^n$ is the number of
  $x_i$ which are $0$. So we have a well defined map
  $c: Q\rightarrow \Z_{\geq 0}$ where $c(q)$ is the codimension of a point $q\in Q$. In particular, the interior $Q^{\circ}$ of $Q$ consists of points of codimension $0$, i.e. $Q^{\circ} = c^{-1}(0)$.

   Suppose $Q$ is an $n$-dimensional manifold with corners with $\partial Q\neq \varnothing$.
  An \emph{open face} of $Q$ of codimension $k$
  is a connected component of $c^{-1}(k)$. A (closed) 
  \emph{face}
  is the closure of an open face. A face of codimension one is called a \emph{facet} of $Q$. Note that a face of codimension zero in $Q$ is just a connected component of $Q$.

  A manifold with corners $Q$ is said to be \emph{nice} if either its boundary $\partial Q$ is empty or
  $\partial Q$ is non-empty and any codimension-$k$ face of $Q$ is a component of the intersection of
  $k$ different facets in $Q$. 
  
   Let $Q$ be a nice $n$-manifold with corners.
   Let $\mathcal{F}(Q) =\{F_1,\cdots, F_m \}$ 
   be the set of facets of $Q$.  For any subset $J\subseteq [m]=\{1,\cdots,m\}$, let
\[ 
 F_J = \bigcup_{j\in J} F_j, \ F_{\varnothing} = \varnothing ;  \ \ F_{\cap J} =  \bigcap_{j\in J} F_j, \ F_{\cap \varnothing} = Q.
\]

  It is clear that 
 $$F_J\subseteq F_{J'}, \ \ F_{\cap J'}\subseteq F_{\cap J}, \ \ F_{\cap J}\subseteq F_J,\ \forall J\subseteq J'\subseteq [m].$$
 
   Let $\lambda: \mathcal{F}(Q) \rightarrow
    \Z^m$ be a map such that $\{\lambda(F_1),\cdots,\lambda(F_m)\}$ is a unimodular basis of $\Z^m\subset \R^m$.
   Since $S^1=\{ z\in \mathbb{C}\,|\, \|z\|=1\}$,
  we can identify the $m$-torus $(S^1)^m=\R^m\slash \Z^m$. The \emph{moment-angle manifold over $Q$} is defined by:
    \begin{equation} \label{Equ:glue-back-complex}
       \mathcal{Z}_{Q} = Q\times (S^1)^m \slash \sim
     \end{equation} 
     where $(x,g) \sim (x',g')$ if and only if $x=x'$ and $g^{-1}g' \in \mathbb{T}^{\lambda}_x$
   where $\mathbb{T}^{\lambda}_x$ is the subtorus of $(S^1)^m$ determined by
   the linear subspace of $\R^m$ spanned by the set $\{ \lambda(F_j) \, |\, x\in F_j \}$. There is a canonical action of $(S^1)^m$ on $\mathcal{Z}_{Q}$ defined by:
   \begin{equation} \label{Equ:Canon-Action-Complex}
    g' \cdot [(x,g)] = [(x,g'g)], \ x\in Q,\ g,g'\in (S^1)^m. 
   \end{equation} 
 
  Since the manifold with corners $Q$ is nice and $\lambda$ is unimodular, it is easy to see from the above definition that $\mathcal{Z}_{Q}$ is a manifold. \\
     
 \noindent \textbf{Convention:} In the rest of this paper, 
 we assume that any nice manifold with corners $Q$ can be equipped with a 
 CW-complex structure such that every face of $Q$ is a 
 subcomplex. 
 In addition, we assume that $Q$ has only finitely many faces.
Note that a compact smooth nice manifold with corners
always satisfies these two conditions since it is triangulable (see Johnson~\cite{John83}).  But in general we do not require $Q$ to be compact or smooth. We do not assume $Q$ to be connected either.\\

 Similarly to the stable decomposition of (generalized) moment-angle complexes obtained in~\cite{BBCG10}, we have the following stable decomposition of $\mathcal{Z}_Q$.
 
  \begin{thm}\label{Thm:Stable-Decomp-Main}
     Let $Q$ be a nice manifold with corners with facets $F_1,\cdots, F_m$. There is a homotopy equivalence
         \begin{equation}  
           \mathbf{\Sigma}( \mathcal{Z}_Q ) \simeq \bigvee_{J\subseteq [m]} 
           \mathbf{\Sigma}^{|J|+1}( Q\slash F_J)
           \end{equation}
          where $\bigvee$ denotes the wedge sum and $\mathbf{\Sigma}$ denotes the reduced suspension. 
       \end{thm}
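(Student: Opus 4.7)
The plan is to adapt the Bahri--Bendersky--Cohen--Gitler (BBCG) stable splitting of polyhedral products to the setting of moment-angle manifolds over a general nice manifold with corners. The first step is to introduce the ``rim-cubicalization'' of $Q$ advertised in the abstract. Near a codimension-$k$ point of $Q$ one has the local model $\R^{n-k}\times\R_{\geq 0}^{k}$, and one can collapse the $k$ non-negative coordinates onto a cube $[0,1]^{k}$. Assembling these local collapses along a collar of $\partial Q$ produces a cubical refinement whose $k$-cubes are indexed by subsets $J \subseteq [m]$ with $F_{\cap J}\neq\varnothing$, and the orbit projection $\mathcal{Z}_Q \to Q$ locally becomes the standard polyhedral product $(D^{2},S^{1})^{J}$ over each cube. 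This packages $\mathcal{Z}_Q$ as a BBCG-style polyhedral product over a refined cell structure on $Q$.

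Next, I would apply the James--Milnor stable splitting of a finite product,
\[
\mathbf{\Sigma}(X_1\times\cdots\times X_m) \simeq \bigvee_{\varnothing\neq J\subseteq[m]} \mathbf{\Sigma}\bigl(\bigwedge_{j\in J} X_j\bigr),
\]
with $X_j=S^1$, and combine it with the cubical model above. The contribution indexed by $J$ couples the smash $S^{|J|}=\bigwedge_{j\in J}S^{1}$ with the part of the base where the $J$-coordinates are identified, which naturally produces the quotient $Q/F_J$, so that summand becomes $\mathbf{\Sigma}(Q/F_J)\wedge S^{|J|}=\mathbf{\Sigma}^{|J|+1}(Q/F_J)$. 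The $J=\varnothing$ term is forced by the convention $Q/F_\varnothing=Q$ (pointed) and recovers $\mathbf{\Sigma} Q$. To turn these candidate summands into an actual wedge decomposition, I would set up a filtration $\mathcal{Z}_Q^{(0)}\subseteq\cdots\subseteq\mathcal{Z}_Q^{(m)}=\mathcal{Z}_Q$ indexed by the number of still-free circle factors, in the spirit of the fat-wedge/Taylor filtration of BBCG. The standing CW-assumption on $Q$ ensures each inclusion is a closed cofibration, and the standard fact that the James filtration on $T^{m}$ splits stably lets the successive cofibers wedge-sum to give the claimed decomposition after one suspension.

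The main obstacle is that $Q$ is not assumed contractible, so each $Q/F_J$ may be a genuinely nontrivial space; one cannot reduce, as in the polytope or simplicial-complex case, to purely combinatorial data about a nerve or simplicial poset. Instead, one must verify that the cubical model of the first step is globally compatible with both the face stratification of $Q$ and its CW structure. Producing a cubical refinement on the rim of $Q$ that faithfully captures the gluing of $\mathcal{Z}_Q$ out of local polyhedral products -- and, in particular, justifying that the resulting filtration on $\mathcal{Z}_Q$ is a cofibration filtration matching the James-type filtration on $T^{m}$ in the fiber direction -- is in my view the most delicate technical piece of the argument.
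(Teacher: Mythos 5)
Your overall strategy is the paper's: rim-cubicalize $Q$, identify $\mathcal{Z}_Q$ with a polyhedral-product--like subspace of $Q\times(D^2)^m$, and run the BBCG stable-splitting machinery. But two steps that carry the real content of the proof are asserted rather than argued, and one of them is set up in a way that would lead you astray. First, your cubical pieces are ``indexed by subsets $J\subseteq[m]$ with $F_{\cap J}\neq\varnothing$.'' In a general nice manifold with corners an intersection $F_{\cap J}$ can have many connected components, and the correct indexing is by the faces $f$ of $Q$ (the pieces are $\widehat f=f\times\prod_{j\in I_f}[0,1]_{(j)}\times\prod_{j\notin I_f}1_{(j)}$, glued inside $Q\times[0,1]^m$), together with auxiliary subsets $L\subseteq I_f$ so that $(D^2,S^1)^Q$ becomes a genuine colimit of products over a finite poset. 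More importantly, the sentence ``the contribution indexed by $J$ \dots naturally produces the quotient $Q/F_J$'' is exactly the theorem; nothing in your sketch produces it. The actual mechanism is: after suspending and splitting each piece $f_+\rtimes\bigl(\prod D^2\times\prod S^1\bigr)$, one collects, for each fixed $J$, the summands $f_+\wedge\bigwedge_{J\cap(I_f\setminus L)}D^2\wedge\bigwedge_{J\setminus(I_f\setminus L)}S^1$ into a diagram over the poset; contractibility of $D^2$ plus the Homotopy Lemma (Vogt/Ziegler--\v Zivaljevi\'c, as in BBCG) collapses every summand with $J\cap(I_f\setminus L)\neq\varnothing$ to a point, and one then has to compute that the faces supporting collapsed summands sweep out exactly $F_J$ while the surviving ones sweep out $Q^\circ\cup F_{[m]\setminus J}$, so the colimit is $(Q/F_J)\wedge S^{|J|}$. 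That combinatorial identification is where $F_J$ (the \emph{union} of facets) enters, and your indexing by nonempty intersections $F_{\cap J}$ points in the opposite direction.

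Second, the splitting lemma you invoke requires based connected spaces, but there is no basepoint in $Q$ (or in a face $f$) compatible with the colimit structure. The paper's fix is to adjoin a disjoint basepoint $q_0$, work with $\mathcal{Z}_Q\sqcup\{\mathrm{pt}\}$, and prove a generalized James--Milnor splitting valid for spaces that are a connected complex union a disjoint basepoint; the extra $S^1$ summand this creates is then cancelled at the end. Your substitute --- a filtration of $\mathcal{Z}_Q$ ``matching the James-type filtration on $T^m$ in the fiber direction'' --- is not available as stated, because $\mathcal{Z}_Q\to Q$ is not a $T^m$-bundle (the fiber degenerates over $\partial Q$), so there is no global fiber direction in which to filter. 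These are not cosmetic omissions: without the colimit-plus-Homotopy-Lemma step and the basepoint device, the wedge summands cannot be identified.
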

       
       Here we will not explicitly write down the basepoints for our spaces unless it is necessary to do so.

          \begin{cor} \label{Cor:Main}
  Let $Q$ be a nice manifold with corners with facets $F_1,\cdots, F_m$. The integral (reduced) cohomology group of
  $\mathcal{Z}_Q$ is given by:
  \begin{equation} \label{Equ:Main-Result}
     H^p(\mathcal{Z}_Q)\cong \bigoplus_{J \subseteq [m]} H^{p-|J|}(Q,F_J),\ \ 
      \widetilde{H}^p(\mathcal{Z}_Q) \cong \bigoplus_{J \subseteq [m]} \widetilde{H}^{p-|J|}(Q\slash F_J),
     \, \forall p\in \Z. 
  \end{equation}   
\end{cor}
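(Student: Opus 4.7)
The plan is to derive the cohomology formula as a direct consequence of Theorem~\ref{Thm:Stable-Decomp-Main}. First, I would apply reduced cohomology $\widetilde{H}^*(-)$ to the homotopy equivalence
\[
\mathbf{\Sigma}(\mathcal{Z}_Q) \simeq \bigvee_{J\subseteq[m]} \mathbf{\Sigma}^{|J|+1}(Q/F_J).
\]
Reduced cohomology sends wedge sums to direct sums and satisfies the suspension isomorphism $\widetilde{H}^p(\mathbf{\Sigma} X) \cong \widetilde{H}^{p-1}(X)$. Applying these on both sides and cancelling one degree of suspension yields
\[
\widetilde{H}^p(\mathcal{Z}_Q) \cong \bigoplus_{J\subseteq[m]} \widetilde{H}^{p-|J|}(Q/F_J),
\]
which is exactly the right-hand identity of the corollary.

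Next, I would translate the reduced-quotient version into the relative version. By the standing convention, $Q$ admits a CW structure in which each face $F_j$, and hence each union $F_J = \bigcup_{j\in J} F_j$, is a subcomplex. Thus $(Q, F_J)$ is a good pair for every non-empty $J$, and we have the standard identification $\widetilde{H}^{p-|J|}(Q/F_J) \cong H^{p-|J|}(Q, F_J)$. For $J = \varnothing$, recall $F_{\varnothing} = \varnothing$, so the quotient $Q/F_\varnothing$ is $Q$ with a disjoint basepoint added, giving $\widetilde{H}^p(Q/F_\varnothing) \cong H^p(Q) \cong H^p(Q, F_\varnothing)$; this term absorbs the $\Z$ discrepancy between $H^0(\mathcal{Z}_Q)$ and $\widetilde{H}^0(\mathcal{Z}_Q)$ and thus upgrades the reduced formula to the unreduced one
\[
H^p(\mathcal{Z}_Q) \cong \bigoplus_{J\subseteq[m]} H^{p-|J|}(Q, F_J).
\]

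There is no real obstacle beyond the bookkeeping above: the whole argument is a formal consequence of Theorem~\ref{Thm:Stable-Decomp-Main} together with the basic properties of reduced cohomology (wedge, suspension, good-pair excision). The only mild subtlety worth spelling out is the treatment of the $J = \varnothing$ term, where one must distinguish $Q/\varnothing = Q_+$ from $Q$ itself in order to pass cleanly between the reduced and unreduced versions of the formula; everything else is automatic.
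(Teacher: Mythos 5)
Your derivation is correct and is essentially how the paper obtains the corollary: it is presented as an immediate consequence of Theorem~\ref{Thm:Stable-Decomp-Main} via the wedge and suspension axioms for reduced cohomology together with the good-pair identification $\widetilde{H}^{*}(Q/F_J)\cong H^{*}(Q,F_J)$, the only additional remark being that $H^{*}(Q,F_{\varnothing})=H^{*}(Q)\cong\widetilde{H}^{*}(Q)\oplus\Z$ for the empty-set term. One small caution on the point you yourself flag: in the reduced formula the $J=\varnothing$ summand must be read as $\widetilde{H}^{p}(Q)$ (i.e.\ $Q/F_{\varnothing}=Q$ with a basepoint chosen in $Q$, matching the $\mathbf{\Sigma}(Q)$ summand appearing in the proof of Theorem~\ref{Thm:Stable-Decomp-Main}), whereas your reading $\widetilde{H}^{p}(Q/F_{\varnothing})\cong H^{p}(Q)$ via $Q_{+}$ is the one that produces the unreduced formula; conflating the two would make the reduced identity fail by a $\Z$ in degree $0$.
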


 Note that when $J=\varnothing$, $H^{*}(Q,F_{\varnothing})=H^{*}(Q,\varnothing)=H^{*}(Q)\cong\widetilde{H}^{*}(Q)\oplus \Z$. 

The term ``cohomology'' of a space $X$, denoted by $H^*(X)$, in this paper always means singular cohomology with integral coefficients if not specified otherwise.

 When $Q$ is acyclic (i.e. $\widetilde{H}^*(Q)=0$), we have
  $H^{p}(Q,F_J)\cong \widetilde{H}^{p-1}(F_J)$ by a cohomology long exact sequence for the pair $(Q,F_J)$. So in this case, 
  \[    H^p(\mathcal{Z}_Q)\cong \bigoplus_{J \subseteq [m]} \widetilde{H}^{p-|J|-1}(F_J),\, \forall p\in \Z. \]  
 This recovers Hochster's formula for the moment-angle manifold over a simple polytope in~\cite[Theorem 3.2.9]{BP15} (also see~\cite[Proposition 3.2.11]{BP15}). 
 
 \begin{rem} \label{Rem:Real-Moment-Angle}
  There is an analogue of $\mathcal{Z}_Q$ by replacing the group $(S^1)^m$ by $(\Z_2)^m$. The counterpart in the $(\Z_2)^m$ construction, denoted by
  $\R\mathcal{Z}_Q$,
  is a special case of the basic construction in~\cite[Ch.\,5]{Davisbook08} for a mirror space along with a Coxeter system.  A formula parallel to Corollary~\ref{Cor:Main} for computing the integral cohomology group of
  $\R\mathcal{Z}_Q$ is contained in~\cite[Theorem A]{Da87} (also see~\cite[Ch.\,8]{Davisbook08}). We call
  $\R\mathcal{Z}_Q$ the \emph{real moment-angle manifold over $Q$}.
\end{rem}
        
    Given a nice manifold with corners $Q$ with facets $F_1,\cdots, F_m$, define 
  \begin{equation} \label{Equ:Def-R-Q}
    \mathcal{R}^*_Q :=\bigoplus_{J \subseteq [m]} H^*(Q,F_J)
    \end{equation} 
  
  There is a graded ring structure $\Cup$ on $\mathcal{R}^*_Q $ defined as follows:
  \begin{itemize}
       \item If $J\cap J' \neq \varnothing$, then
     $ H^*(Q,F_J) \otimes H^{*}(Q,F_{J'}) \overset{\Cup}{\longrightarrow}
    H^{*}(Q,F_{J\cup J'})$ is trivial.
    
     \item If $J\cap J'=\varnothing$, then
  $ H^*(Q,F_J) \otimes H^{*}(Q,F_{J'}) \overset{\Cup}{\longrightarrow}
    H^{*}(Q,F_{J\cup J'})$ is the relative cup product $\cup$
     (see~\cite[p.\,209]{Hatcher02}).     
    \end{itemize}

 We can prove the following theorem
   via the above stable decomposition of $\mathcal{Z}_Q$.   
 
   \begin{thm}\label{Thm:Cohomology-Ring-Isom}
   Let $Q$ be a nice manifold with corners with $m$ facets
   $F_1,\cdots, F_m$. There exists a ring isomorphism (up to a sign)
     from $(\mathcal{R}^*_Q,\Cup)$ to the integral cohomology ring of $\mathcal{Z}_Q$. Moreover, we can make this ring isomorphism
     degree-preserving by shifting the degrees of all the elements in $H^*(Q,F_J)$ up by $|J|$ for every $J\subseteq [m]$.
  \end{thm}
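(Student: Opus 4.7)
My plan is to upgrade the additive isomorphism of Corollary~\ref{Cor:Main} to a multiplicative one by analyzing how the reduced diagonal of $\mathcal{Z}_Q$ behaves under the stable splitting of Theorem~\ref{Thm:Stable-Decomp-Main}. First I would package the isomorphism of Corollary~\ref{Cor:Main} into a graded linear map
\[
\Phi\colon \mathcal{R}^*_Q \longrightarrow H^*(\mathcal{Z}_Q),
\]
placing $H^q(Q,F_J)\cong \widetilde{H}^q(Q/F_J)$ inside $H^{q+|J|}(\mathcal{Z}_Q)$ as the composite of the suspension isomorphism $H^{q+|J|}(\mathcal{Z}_Q)\cong \widetilde{H}^{q+|J|+1}(\mathbf{\Sigma}\mathcal{Z}_Q)$ with the projection onto the $J$-th wedge summand of Theorem~\ref{Thm:Stable-Decomp-Main}. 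The degree shift by $|J|$ is exactly what is prescribed in the statement, so $\Phi$ is a degree-preserving additive isomorphism, and the remaining task is to check that $\Phi$ intertwines $\Cup$ with the cup product.

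For this I would study the reduced diagonal $\widetilde{\Delta}\colon \mathcal{Z}_Q\to \mathcal{Z}_Q\wedge \mathcal{Z}_Q$, which induces cup product on reduced cohomology. After one suspension, Theorem~\ref{Thm:Stable-Decomp-Main} splits both sides as wedges and $\mathbf{\Sigma}\widetilde{\Delta}$ breaks into components
\[
\Delta_{J,J'}\colon \mathbf{\Sigma}^{|J\cup J'|+1}(Q/F_{J\cup J'}) \longrightarrow \mathbf{\Sigma}^{|J|+1}(Q/F_J) \wedge \mathbf{\Sigma}^{|J'|+1}(Q/F_{J'})
\]
for each pair $(J,J')$, and it suffices to compute the cohomology map induced by each $\Delta_{J,J'}$. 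The key geometric input is that the splitting of Theorem~\ref{Thm:Stable-Decomp-Main} is compatible with the product structure $Q\times(S^1)^m$: the $J$-summand has the form $\mathbf{\Sigma}(Q/F_J)\wedge \bigwedge_{j\in J} S^1_j$, with one smash factor of $S^1$ for each facet index in $J$. The genuine diagonal of $\mathcal{Z}_Q$ restricts to the diagonal on each circle $S^1_j$, and if $j\in J\cap J'$ the reduced diagonal $S^1_j\to S^1_j\wedge S^1_j = S^2$ is null-homotopic because $\pi_1(S^2)=0$. Hence $\Delta_{J,J'}\simeq \ast$ whenever $J\cap J'\neq\varnothing$, which gives the first bullet in the definition of $\Cup$. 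When $J\cap J'=\varnothing$, the circles partition cleanly between the two summands and $\Delta_{J,J'}$ becomes, up to suspension coordinates, the smash of identities on the $S^1_j$ with the topological diagonal $Q\to Q\times Q$; the latter sends the pair $(Q,F_{J\cup J'}) = (Q, F_J\cup F_{J'})$ into $(Q\times Q,\, F_J\times Q\cup Q\times F_{J'})$, which is precisely the geometric model of the relative cup product landing in $H^*(Q,F_{J\cup J'})$.

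The main obstacle is making the above geometric picture rigorous and functorial. One must choose the homotopy equivalence of Theorem~\ref{Thm:Stable-Decomp-Main} so that it is compatible with the partial diagonals extracted from $Q\times (S^1)^m$, and track the suspension coordinates so that the cumulative degree shift $|J|+|J'|=|J\cup J'|$ on the source of $\Delta_{J,J'}$ matches the $J\cup J'$-summand on the target. Here I expect the rim-cubicalization model of $Q$ underlying the proof of Theorem~\ref{Thm:Stable-Decomp-Main} to be essential: it provides a CW-structure in which the cofibrations $F_J\hookrightarrow Q\to Q/F_J$ are cellular and the diagonal admits a natural cellular approximation compatible with the partition of torus factors. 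Once these compatibilities are set up, assembling the $\Delta_{J,J'}$'s and passing to cohomology yields the desired ring isomorphism.
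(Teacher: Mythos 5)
Your proposal is correct and follows essentially the same route as the paper: the paper also upgrades the additive splitting to a ring isomorphism by restricting the partial diagonal maps of the ambient product (there $Q_+\times (D^2)^m$, following \cite{BBCG12}) to the colimit model of $\mathcal{Z}_Q$, kills the components with $J\cap J'\neq\varnothing$ by the null-homotopy of the reduced diagonal of a suspension (Lemma~\ref{Lem:Reduced-Diagonal-Null}), and identifies the $J\cap J'=\varnothing$ components with the relative cup product via the diagonal of $Q$ (Lemmas~\ref{Lem:Cross-Product-Pull-Back} and~\ref{Lem:Induced-Cup-Prod}). The compatibility issue you flag as the main obstacle is resolved in the paper exactly as you anticipate, by the strictly commutative diagram relating $\widehat{\Pi}_{J\cup J'}$ and $\widehat{\Pi}_J\wedge\widehat{\Pi}_{J'}$ on the colimit before any suspensions are taken.
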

  
   It is indicated in~\cite[Exercise\,3.2.14]{BP15} that Theorem~\ref{Thm:Cohomology-Ring-Isom} holds for any simple polytope.   
 Moreover, we can generalize
 Theorem~\ref{Thm:Cohomology-Ring-Isom} to describe the  cohomology ring of the polyhedral product of any
$(\mathbb{D},\mathbb{S}) = \{ \big( D^{n_j+1}, S^{n_j}, a_j \big) \}^m_{j=1}$ over $Q$ (see Theorem~\ref{Thm:Main-Dn-Sn-1}).       
In particular,
we have the following result for $\R\mathcal{Z}_Q$.

  \begin{thm}[Corollary~\ref{Cor:Real-Moment}]
    Let $Q$ be a nice manifold with corners with facets $F_1,\cdots, F_m$. Then 
     $$  \mathbf{\Sigma}( \R\mathcal{Z}_Q ) \simeq \bigvee_{J\subseteq [m]} 
           \mathbf{\Sigma}( Q\slash F_J), \ \ H^p(\R\mathcal{Z}_Q) \cong \bigoplus_{J\subseteq [m]} H^p(Q, F_J), \ \forall p\in \Z.$$
    Moreover, the integral cohomology ring of $\R\mathcal{Z}_Q$ is isomorphic as a graded ring to the ring $(\mathcal{R}^*_Q,\cup)$
     where $\cup$ is the relative cup product
      $$ H^*(Q,F_J) \otimes H^{*}(Q,F_{J'}) \overset{\cup}{\longrightarrow}
    H^{*}(Q,F_{J\cup J'}), \ \forall \, J,J'\subseteq [m].$$ 
  \end{thm}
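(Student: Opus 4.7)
The plan is to deduce the statement from the general polyhedral-product theorem, Theorem~\ref{Thm:Main-Dn-Sn-1}, by specializing the sequence to $(\mathbb{D},\mathbb{S}) = \{(D^1, S^0, a_j)\}_{j=1}^m$ with $a_j$ one endpoint of $S^0 = \{-1,+1\}$. First I would identify $\R\mathcal{Z}_Q$ with the polyhedral product over $Q$ of this sequence: under the identification $\Z_2 \cong S^0 \subset D^1$, the gluing relation in~\eqref{Equ:glue-back-complex} with $(\Z_2)^m$ replacing $(S^1)^m$ reproduces exactly the basic construction described in Remark~\ref{Rem:Real-Moment-Angle}, and a direct comparison on representatives identifies the two spaces.

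With this identification in hand, Theorem~\ref{Thm:Main-Dn-Sn-1} delivers a stable decomposition whose $J$-th wedge summand has the general form $\mathbf{\Sigma}\bigl(\bigwedge_{j\in J} S^{n_j} \wedge (Q/F_J)\bigr)$. Setting $n_j=0$ and using that $S^0$ is the unit for the smash product, each factor $\bigwedge_{j\in J} S^0 \wedge (Q/F_J)$ collapses canonically to $Q/F_J$, yielding the suspension splitting $\mathbf{\Sigma}(\R\mathcal{Z}_Q) \simeq \bigvee_{J\subseteq[m]} \mathbf{\Sigma}(Q/F_J)$. Taking reduced cohomology and invoking $\widetilde{H}^p(Q/F_J) \cong H^p(Q, F_J)$ for the good CW pair $(Q, F_J)$ (guaranteed by our standing convention on $Q$) then gives the asserted group formula.

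For the ring structure I would follow the partial-diagonal approach used in the proof of Theorem~\ref{Thm:Cohomology-Ring-Isom}. In that framework the cup product on $H^*(\R\mathcal{Z}_Q)$ is computed from a partial diagonal map which, for each $j \in J \cap J'$, involves the diagonal $S^{n_j} \to S^{n_j} \wedge S^{n_j}$. In the moment-angle setting ($n_j = 1$), this diagonal is the null-homotopic map $S^1 \to S^2$, which is precisely what forces the $\Cup$-product to vanish whenever $J \cap J' \neq \varnothing$. In the real case ($n_j = 0$), however, the analogous diagonal $S^0 \to S^0 \wedge S^0 \cong S^0$ is the identity, so no degeneration occurs and the product agrees with the relative cup product $H^*(Q, F_J) \otimes H^*(Q, F_{J'}) \to H^*(Q, F_{J \cup J'})$ for every pair $J, J' \subseteq [m]$. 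Since each wedge summand carries only a single suspension (independent of $J$), no degree shift enters, and the resulting graded ring is $(\mathcal{R}^*_Q, \cup)$ with the standard grading.

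The main obstacle I expect is verifying that the partial-diagonal machinery, originally set up with positive-dimensional spheres in mind, passes cleanly through the degenerate case $n_j = 0$, where $S^0$ is disconnected and basepoints play a more delicate role. Concretely, one must check that the stable splitting produced by Theorem~\ref{Thm:Main-Dn-Sn-1} is compatible with the cup product not merely additively but multiplicatively, i.e.\ that the canonical homotopy equivalences $S^0 \wedge X \simeq X$ intertwine the relevant diagonals on both sides of the decomposition. Once this compatibility is established, the identification of the ring structure with $(\mathcal{R}^*_Q, \cup)$ is essentially forced by combining the moment-angle argument with the non-vanishing of the $S^0$-diagonal.
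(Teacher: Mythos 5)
Your proposal is correct and follows essentially the same route as the paper: the author also obtains Corollary~\ref{Cor:Real-Moment} as the special case $(\mathbb{D},\mathbb{S})=\{(D^1,S^0,a_j)\}_{j=1}^m$ of Theorem~\ref{Thm:Main-Dn-Sn-1}, using the identification $\R\mathcal{Z}_Q\cong(D^1,S^0)^Q$, the fact that $X\wedge S^0\cong X$ kills all degree shifts, and the observation that $\Delta_{S^0}=id_{S^0}$ is not null-homotopic so that $\Cup^{(D^1,S^0)}$ is the relative cup product for every pair $J,J'$. The basepoint and disconnectedness issues you flag are exactly what the paper's Theorem~\ref{Thm:Generalized-Stable-Decomp} and case (ii) of the proof of Theorem~\ref{Thm:Main-Dn-Sn-1} are set up to handle.
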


   We can describe the equivariant cohomology ring of $\mathcal{Z}_Q$ with respect to the canonical action
    of $(S^1)^m$ as follows. 
    
  Let $\mathbf{k}$ denote a commutative ring with a unit.
  For any $J\subseteq [m]$, let $R^J_{\mathbf{k}}$ be the subring of
   the polynomial ring $\mathbf{k}[x_1,\cdots, x_m]$ defined by
    \begin{equation}  \label{Equ:RJ-K}
     R^J_{\mathbf{k}} : = 
      \begin{cases}
   \mathrm{span}_{\mathbf{k}}\{ x^{n_1}_{j_1}\cdots x^{n_s}_{j_s} \,|\,
       n_1>0, \cdots, n_s>0 \} ,  &  \text{if $J=\{ j_1,\cdots, j_s \}\neq \varnothing$}; \\
  \ \ \mathbf{k},  &  \text{if $J=\varnothing$}.
 \end{cases} 
  \end{equation}  
  
    We can multiply $f(x)\in  R^J_{\mathbf{k}}$ and 
    $f'(x)\in R^{J'}_{\mathbf{k}}$ in 
    $\mathbf{k}[x_1,\cdots, x_m]$ and obtain an element
    $f(x)f'(x)\in R^{J\cup J'}_{\mathbf{k}}$.
   
  \begin{defi}[Topological Face Ring] \label{Def:Top-Face-Ring}
    Let $Q$ be a nice manifold with corners with $m$ facets $F_1,\cdots, F_m$. For any coefficients ring $\mathbf{k}$, the \emph{topological face ring of $Q$} over $\mathbf{k}$ is defined to be
     \begin{equation} \label{Equ:Def-Top-Face-Ring}
      \mathbf{k}\langle Q\rangle := \bigoplus_{J\subseteq [m]} H^*( F_{\cap J};\mathbf{k})\otimes R^J_{\mathbf{k}}. \end{equation}
  Here if $F_{\cap J}=\varnothing$, we use the convention $H^*(\varnothing;\mathbf{k})=\{0\}$.

     For any $J,J'\subseteq [m]$, the product $\star$ on $\mathbf{k}\langle Q\rangle$,
     $$ \Big( H^*( F_{\cap J};\mathbf{k})\otimes R^J_{\mathbf{k}} \Big) \otimes
     \Big( H^*( F_{\cap J'};\mathbf{k})\otimes R^{J'}_{\mathbf{k}} \Big) \xlongrightarrow{\,\ \scalebox{0.9}{$\star$}\,\ }
     \Big( H^*( F_{\cap (J\cup J')};\mathbf{k})\otimes R^{J\cup J'}_{\mathbf{k}} \Big) $$
    for $\phi\in H^*( F_{\cap J};\mathbf{k})$, 
     $\phi'\in H^*( F_{\cap J'};\mathbf{k})$ and
     $f(x)\in R^J_{\mathbf{k}}$, $f'(x)\in R^{J'}_{\mathbf{k}}$,  is defined by:
     \begin{equation} \label{Equ:Face-Ring-Product}
      (\phi \otimes f(x))\star (\phi'\otimes f'(x)):=
      \big(\kappa^*_{J\cup J', J}(\phi)\cup
   \kappa^*_{J\cup J', J'}(\phi') \big) \otimes f(x)f'(x)   
    \end{equation}
    where $\kappa_{I', I} : F_{\cap I'}\rightarrow F_{\cap I}$ is the inclusion map for any subsets $I\subseteq I'\subseteq [m]$ and $\kappa^*_{I', I}:
    H^*( F_{\cap I};\mathbf{k}) \rightarrow H^*( F_{\cap I'};\mathbf{k})$ is the induced homomorphism on 
    cohomology.
  \end{defi}
  
  In addition, we can consider $\mathbf{k}\langle Q\rangle$ as a graded ring if we choose a degree for every indeterminate $x_j$
  in $\mathbf{k}[x_1,\cdots, x_m]$ and define
  $$\mathrm{deg}\big(\phi\otimes (x^{n_1}_{j_1}\cdots x^{n_s}_{j_s})\big) = \mathrm{deg}(\phi) + n_1\mathrm{deg}(x_{j_1})+\cdots + n_s\mathrm{deg}(x_{j_s}).$$

  \begin{thm} \label{Thm:Equivariant-Cohomology-Z-Q}
    Let $Q$ be a nice manifold with corners with facets $F_1,\cdots, F_m$. Then the equivariant cohomology ring of $\mathcal{Z}_Q$ (or $\R\mathcal{Z}_Q$) with $\Z$-coefficients (or $\Z_2$-coefficients) with respect to the canonical
     $(S^1)^m$-action
     (or $(\Z_2)^m)$-action) is isomorphic as a graded ring to the topological face ring $\Z\langle Q\rangle$ (or $\Z_2\langle Q\rangle$) of $Q$ by choosing
    $\mathrm{deg}(x_j)=2$ (or $\mathrm{deg}(x_j)=1$) for all $1\leq j \leq m$.
  \end{thm}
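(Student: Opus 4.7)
The plan is to realize the Borel construction $ET^m\times_{T^m}\mathcal{Z}_Q$ as a Davis--Januszkiewicz type polyhedral product over $Q$, then apply the cohomology-ring version of the polyhedral product theorem (the $BS^1$-analogue of Theorem~\ref{Thm:Main-Dn-Sn-1}) and recognize its output as the topological face ring $\Z\langle Q\rangle$. First, starting from the gluing description \eqref{Equ:glue-back-complex}, I would rewrite
\[
ET^m\times_{T^m}\mathcal{Z}_Q \;\cong\; \big(Q\times ET^m\big)\big/\!\approx,
\]
where $(x,e)\approx (x,e')$ iff $e$ and $e'$ have the same image in $ET^m/\mathbb{T}^{\lambda}_x$. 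Since $\lambda$ is unimodular and $ET^m\simeq \prod_{j=1}^m ES^1$, a point with $J(x)=\{j:x\in F_j\}$ has its $j$-th coordinate collapsed to $BS^1$ for $j\in J(x)$ and to the basepoint $\ast$ otherwise. This exhibits the Borel space as the polyhedral product $\underline{DJ}(Q)$ of the based CW-pairs $\{(BS^1,\ast)\}_{j=1}^m$ over $Q$, namely the Davis--Januszkiewicz space of $Q$ mentioned in the introduction. The same argument with $E(\Z_2)^m$ and $(B\Z_2,\ast)=(\R P^\infty,\ast)$ handles $\R\mathcal{Z}_Q$.

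Second, I would invoke the CW-pair extension of Theorem~\ref{Thm:Main-Dn-Sn-1} applied to the based CW-pair $(BS^1,\ast)$ in every coordinate. This yields a graded ring decomposition
\[
H^{*}\big(\underline{DJ}(Q)\big)\;\cong\;\bigoplus_{J\subseteq [m]} H^{*}(F_{\cap J})\otimes \widetilde H^{*}\Big(\bigwedge_{j\in J} BS^1\Big),
\]
with product given by relative cup product in the $H^{*}(F_{\cap J})$-factor (after restricting along the inclusions $\kappa_{J\cup J',J}$ and $\kappa_{J\cup J',J'}$ into $F_{\cap(J\cup J')}$) combined with the external smash product in the $\widetilde H^{*}$-factor. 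By the Künneth formula together with $H^{*}(BS^1)=\Z[x]$, the second factor is naturally $R^J_{\Z}$ from \eqref{Equ:RJ-K}, with each $x_j$ in degree $2$. This identifies the right-hand side additively with the topological face ring $\Z\langle Q\rangle$ of \eqref{Equ:Def-Top-Face-Ring} and with the correct grading.

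Third, I would match multiplications directly. The smash-product factor multiplies as polynomials in $\Z[x_1,\dots,x_m]$ via the Künneth ring isomorphism, while the cohomological factor multiplies by the partial-diagonal pullback followed by relative cup product; these are exactly the two ingredients in the definition of $\star$ in \eqref{Equ:Face-Ring-Product}. The real case is identical after replacing $\mathbb{C}P^\infty$, $\Z$ by $\R P^\infty$, $\Z_2$, using $\deg(x_j)=1$. The main technical obstacle is extending Theorem~\ref{Thm:Main-Dn-Sn-1} from the finite-dimensional pairs $(D^{n+1},S^n)$ to the infinite-dimensional CW-pair $(BS^1,\ast)$ at the ring level. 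I plan to handle this via the skeletal filtration $\mathbb{C}P^0\subset \mathbb{C}P^1\subset\cdots\subset \mathbb{C}P^\infty$, checking that the stable decomposition of Theorem~\ref{Thm:Stable-Decomp-Main} and the cup product formula of Theorem~\ref{Thm:Cohomology-Ring-Isom} both pass to the colimit; this should be straightforward because $Q$ has only finitely many facets, so only finitely many smash factors occur in each summand and the cohomology of each summand stabilizes in each fixed degree.
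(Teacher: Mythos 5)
Your overall route is the paper's: identify the Borel construction with the Davis--Januszkiewicz space $(\mathbb{C}P^{\infty},*)^{Q}$ and then read off its cohomology ring from the polyhedral-product machinery, matching the result with the topological face ring. Two remarks. First, the relevant specialization is not Theorem~\ref{Thm:Main-Dn-Sn-1} (which treats pairs whose \emph{first} entry is contractible and produces summands built from $Q\slash F_J$), but the opposite case in which the second entry $A_j=*$ is contractible, i.e. Theorem~\ref{Thm:Stable-Decomp-A-Contract} together with the ring isomorphism~\eqref{Equ:Isom-Cohom-A-Contract}, whose summands are indexed by the strata $F_{\cap J}$ --- which is in fact exactly the shape of the formula you wrote down. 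Since those results are stated and proved for arbitrary based CW-complexes $X_j$, your ``main technical obstacle'' (infinite-dimensionality of $BS^1$) is not an obstacle at all and no skeletal filtration is needed; the only genuinely extra inputs are that $H^*(\mathbb{C}P^{\infty})$ is a free $\Z$-module (so the strong smash form of the K\"unneth formula holds over $\Z$) and that the \emph{reduced} diagonal of $\mathbb{C}P^{\infty}$ pulls back to the cup product (the paper's Example~\ref{Exam:CP-infty}); the latter is what makes a repeated index $j\in J\cap J'$ contribute $x_j^{n}x_j^{n'}=x_j^{n+n'}$ rather than zero, and it is the one multiplicative ingredient your sketch leaves implicit. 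Second, your fiberwise identification of the Borel construction with $(\mathbb{C}P^{\infty},*)^{Q}$ should be assembled into a global homotopy equivalence; the paper does this by writing the Borel construction as the polyhedral product of the pairs $(S^{\infty}\times_{S^1}D^2, S^{\infty}\times_{S^1}S^1)$ and applying the homotopy colimit lemma (Theorem~\ref{Thm:Homotopy-Colimit-Lemma}). With these adjustments your argument is the paper's proof.
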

 
Moreover,
  the natural $H^*(BT^m)$-module structure on
  the integral equivariant cohomology ring $H^*_{T^m}(\mathcal{Z}_Q)$ is described in~\eqref{Equ-BT-module-struc} where $T^m=(S^1)^m$.
  
  \begin{rem}
 A calculation of the equivariant cohomology group of $\mathcal{Z}_Q$ with $\Z$-coefficients was announced earlier  by T.~Januszkiewicz in a talk~\cite{Jan20} in 2020. The formula given in Januszkiewicz's talk is equivalent to our $\Z\langle Q\rangle$. But the ring structure of the equivariant cohomology of $\mathcal{Z}_Q$ was not described in~\cite{Jan20}.
\end{rem}

    For a nice manifold with corners $Q$, 
    there are two other notions which reflect the
    stratification of $Q$.
    One is the \emph{face poset} of $Q$ which is 
    the set of all faces of $Q$ ordered by inclusion, denoted by $\mathcal{S}_Q$ (note that each connected component of $Q$ is also a face). 
    The other one is the \emph{nerve simplicial complex} of the covering of $\partial Q$ by its facets, denoted by
  $K_Q$. The \emph{face ring} (or \emph{Stanley-Reisner ring}) of 
  a simplicial complex is an important
  tool to study combinatorial objects in algebraic combinatorics and combinatorial commutative algebra  (see~\cite{MilSturm05} and~\cite{Stanley07}). 
    
   When $Q$ is a simple polytope, all faces
of $Q$, including $Q$ itself, and all their intersections are acyclic.
   Then it is easy to see that the topological face ring of $Q$ is isomorphic to 
   the face ring of $K_Q$ (see Example~\ref{Exam:Face-Ring-Polytope}). But in general, the topological face ring of $Q$ encodes more topological information of $Q$ than
  the face ring of $K_Q$.

  There is another way to think of the topological face ring $\mathbf{k}\langle Q\rangle$. Let 
  $$ \mathcal{R}^*_{\cap Q,\mathbf{k}}:= \bigoplus_{J\subseteq [m]} H^*( F_{\cap J};\mathbf{k}) $$
  where product $*$ on $\mathcal{R}^*_{\cap Q,\mathbf{k}}$ is defined by: for any $\phi\in H^*( F_{\cap J};\mathbf{k}), 
     \phi'\in H^*( F_{\cap J'};\mathbf{k})$,
  $$ \phi * \phi' :=
     \kappa^*_{J\cup J', J}(\phi)\cup
   \kappa^*_{J\cup J', J'}(\phi') \in H^*( F_{\cap (J\cup J')};\mathbf{k}).$$
  
   Moreover,  
   $$\mathbf{k}[x_1,\cdots, x_m]=\bigoplus_{J\subseteq [m]} R^J_{\mathbf{k}},$$   
     so we can think of both $\mathcal{R}^*_{\cap Q,\mathbf{k}}$ and 
   $\mathbf{k}[x_1,\cdots, x_m]$ as $2^{[m]}$-graded rings where $2^{[m]} = \{ J\subseteq [m]\}$ is the power set of $[m]$.
    Then the topological face ring $\mathbf{k}\langle Q\rangle$ can be thought of as the
  Segre product of $\mathcal{R}^*_{\cap Q,\mathbf{k}}$ and 
   $\mathbf{k}[x_1,\cdots, x_m]$ with respect to their
   $2^{[m]}$-gradings. By definition, the \emph{Segre product} of two rings 
   $R$ and $S$ graded by a common semigroup $\mathcal{A}$
   (using the notation in~\cite{Hoa88}) is:
   $$ R \, \scalebox{1.2}{$\underline{\otimes}$}\,  S =
     \bigoplus_{\mathbf{a}\in \mathcal{A}} R_{\mathbf{a}}\otimes S_{\mathbf{a}}. $$
     
  So $R \, \scalebox{1.2}{$\underline{\otimes}$}\,  S $ is a subring of the tensor product of $R$ and $S$ (as graded rings). The Segre product of two graded 
  rings (or modules) is studied in algebraic geometry
   and commutative algebra (see~\cite{Chou64} and~\cite{Hoa88,FrobHoa92} for example).
   
    Here we can think of $2^{[m]}$ as a semigroup where the product of two subsets of $[m]$ is just their union.   
   Then by this notation, we can write 
   $$ \mathbf{k}\langle Q\rangle = \mathcal{R}^*_{\cap Q,\mathbf{k}}\, \scalebox{1.2}{$\underline{\otimes}$}\,   \mathbf{k}[x_1,\cdots, x_m]. $$ 
   
   From this form, we see that $\mathbf{k}\langle Q\rangle$ is essentially determined by $\mathcal{R}^*_{\cap Q,\mathbf{k}}$.

       The paper is organized as follows.       
 In Section~\ref{Sec:Stable-Decomp},
we first construct an embedding of $Q$ into $Q\times [0,1]^m$ which is analogous to the embedding of a simple polytope into a cube. This induces an embedding 
of $\mathcal{Z}_Q$ into $Q\times (D^2)^m$ from which we can do the stable decomposition of $\mathcal{Z}_Q$ and give a proof of Theorem~\ref{Thm:Stable-Decomp-Main}. Our argument proceeds along the same line as the argument given in~\cite[Sec.\,6]{BBCG10} but with some extra ingredients. In fact, we will not do the stable decomposition of $\mathcal{Z}_Q$ directly, but the stable decomposition of the disjoint union of $\mathcal{Z}_Q$ 
with a point. In Section~\ref{Sec:Cohomology}, we obtain a description of the product structure of the cohomology of 
$\mathcal{Z}_Q$ using the stable decomposition of $\mathcal{Z}_Q$ and the partial diagonal map introduced in~\cite{BBCG12}. From this we give a proof of Theorem~\ref{Thm:Cohomology-Ring-Isom}.
In Section~\ref{Sec:Polyhedral-Prod}, we define the notion of polyhedral product of a sequence of based CW-complexes over a nice manifold with corners $Q$ and obtain some results parallel to
$\mathcal{Z}_Q$ for these spaces. In particular, we obtain a description of the integral cohomology ring of real moment-angle manifold
$\R\mathcal{Z}_Q$ (see Corollary~\ref{Cor:Real-Moment}).
   In Section~\ref{Sec:Equivariant-Cohom}, we compute the equivariant cohomology ring of $\mathcal{Z}_{Q}$ and prove Theorem~\ref{Thm:Equivariant-Cohomology-Z-Q}.
In Section~\ref{Sec:Generalization}, we discuss more generalizations of the construction of $\mathcal{Z}_Q$ and extend our main theorems to some wider settings.

   \section{Stable Decomposition of $\mathcal{Z}_Q$} \label{Sec:Stable-Decomp}

  Let $Q$ be a nice manifold with corners with $m$ facets. To obtain the stable decomposition of $\mathcal{Z}_Q$, we first construct a  special embedding of $Q$ into $Q\times [0,1]^m$, called the rim-cubicalization of $Q$.
  This construction can be thought of as a generalization of the embedding of a simple polytope with $m$ facets into 
  $[0,1]^m$ defined in~\cite[Ch.\,4]{BP02}. 
  
  \subsection{Rim-cubicalization of $Q$ in $Q\times [0,1]^m$} \label{Subsec:Rim-Cubical}
 \ \vskip .1cm
  
  Let $F_1,\cdots, F_m$ be all the facets of $Q$.
  For a face $f$ of $Q$, let $I_f$ be the following subset of $[m]$ called the \emph{strata index} of $f$.
      $$I_f =\{ i\in [m]\,|\, f\subseteq F_i\} \subseteq [m].$$
      
  Then we define a subset $\widehat{f}$ of $Q\times [0,1]^m$ associated to $f$ as follows. We write 
$$[0,1]^m = \prod_{j\in [m]} [0,1]_{(j)},$$
  and define
      \begin{equation*}
       \widehat{f} =  f 
       \times \prod_{j\in I_f} [0,1]_{(j)} 
   \times \prod_{j\in [m]\backslash I_f} 1_{(j)}.
   \end{equation*}
    In particular, 
   \begin{equation*}
     \widehat{F}_i =  F_i
       \times  [0,1]_{(i)} 
   \times \prod_{j\in [m]\backslash \{i\}} 1_{(j)},\ 1\leq i \leq m.
   \end{equation*}
    
  Let $\mathcal{S}_Q$ be the face poset of $Q$ and define
   \begin{equation} \label{Equ:Q-Hat}
     \widehat{Q} = \bigcup_{f\in \mathcal{S}_Q}  \widehat{f}\, \subseteq \, Q\times [0,1]^m.
   \end{equation}
   
   It is easy to see that $\widehat{Q}$ is a nice manifold with corners whose facets are $\widehat{F}_1,\cdots, \widehat{F}_m$. 
     If we identify $Q$ 
  with the subspace $Q\times \prod_{j\in [m]} 1_{(j)} \subseteq \widehat{Q}$, then 
  we can think of $\widehat{Q}$ as inductively gluing 
   the product of all codimension-$k$ strata of $Q$ with a $k$-cube to $\partial Q$ (see Figure~\ref{p:Extend-Cell}).
   $$ Q  \xlongleftarrow{\ \text{glue}\,} F_j \times [0,1] \xlongleftarrow{\ \text{glue}\,} \cdots
  \xlongleftarrow{\ \text{glue}\,}   f \times [0,1]^{|I_f|} \xlongleftarrow{\ \text{glue}\,} \cdots $$
   Due to this viewpoint, we call $\widehat{Q}$ the \emph{rim-cubicalization} of $Q$ in $Q\times [0,1]^m$.

\begin{figure}
         \includegraphics[width=0.36\textwidth]{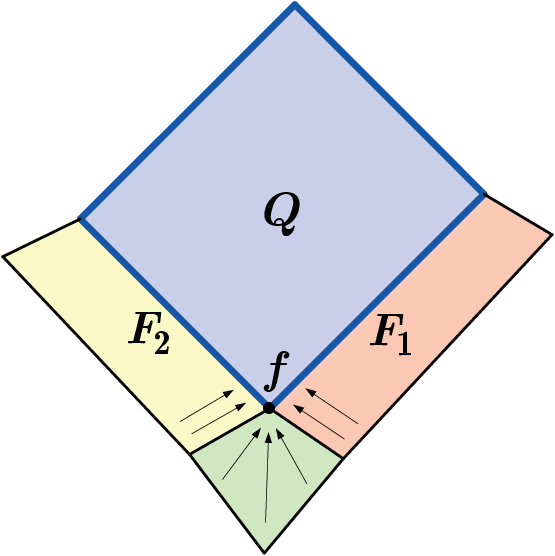}\\
          \caption{Rim-cubicalization of $Q$ in $Q\times [0,1]^m$}\label{p:Extend-Cell}
      \end{figure}

 \begin{lem}\label{Lem:Homeo-Q-Q-hat}
   $\widehat{Q}$ is homeomorphic to $Q$ as a manifold with corners.
 \end{lem}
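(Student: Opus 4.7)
The plan is to exhibit an explicit homeomorphism $\phi: Q \to \widehat{Q}$ that preserves the corner structure, using a compatible system of collars on the faces of $Q$.

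First I would establish the existence of a compatible collar system. Under the standing convention that $Q$ admits a CW structure in which every face is a subcomplex and that $Q$ has only finitely many faces, one constructs by induction on codimension, for each face $f \in \mathcal{S}_Q$ with strata index $I_f = \{i_1, \dots, i_k\}$, an open neighborhood $U_f$ of the open stratum $f^\circ$ and a homeomorphism $\psi_f: U_f \xrightarrow{\ \cong\ } f^\circ \times [0,1)^{I_f}$. Here the coordinate $[0,1)_{(i)}$ records a ``normal distance'' to $F_i$ for each $i \in I_f$, and the collars are chosen compatibly: whenever $f \subset f'$, the $[0,1)^{I_{f'}}$-coordinates of $\psi_{f'}$ appear as a sub-product of those of $\psi_f$ on $U_f \cap U_{f'}$. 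This is classical for smooth compact manifolds with corners, and the CW/finiteness hypotheses let the same construction go through in our setting.

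Next, I use these collars to define $\phi: Q \to \widehat{Q}$. Fix $\delta \in (0,1)$ and a reparametrization $\rho: [0,1) \to [0,1]$ with $\rho(0) = 0$, $\rho$ strictly increasing on $[0,\delta]$, and $\rho \equiv 1$ on $[\delta, 1)$. For $q \in Q$, let $f_q$ be the minimal face containing $q$ and write $\psi_{f_q}(q) = (y;\, s_{i_1}, \dots, s_{i_k})$. Define
\[
\phi(q) = \bigl( r(q),\, t_1(q), \dots, t_m(q) \bigr),
\]
where $t_i(q) = \rho(s_i)$ for $i \in I_{f_q}$ and $t_i(q) = 1$ otherwise, and $r(q)$ is the point with $\psi_{f_q}$-coordinates $(y;\, \min(s_{i_1}/\delta, 1), \dots, \min(s_{i_k}/\delta, 1))$. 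Geometrically, $r$ ``pushes $q$ outward'' along the collars until every collar coordinate is at least $\delta$, while $t_i(q)$ records the original depth of $q$ in the collar of $F_i$. Since $t_i(q) < 1$ only when $i \in I_{f_q}$, we have $\phi(q) \in \widehat{f_q} \subset \widehat{Q}$, and compatibility of the collars is exactly what makes $\phi$ continuous across corners.

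Finally, I verify that $\phi$ is a bijection with continuous inverse: for any $(q',t) \in \widehat{Q}$, the subset $\{i : t_i < 1\}$ together with $q'$ determines the unique preimage $q$ by reversing $\rho$ and the outward push, and by construction $\phi$ sends the open stratum $f^\circ \subset Q$ into the open stratum of $\widehat{f}$ inside $\widehat{Q}$. Hence $\phi$ is a homeomorphism of manifolds with corners. The main obstacle is the first step: building a mutually compatible collar system so that at each codim-$k$ stratum the $k$ incident facet collars glue into an honest product $[0,1)^k$-neighborhood, and these local products agree on overlaps of higher-codimension strata. Once that is in place, Steps 2 and 3 reduce to one-variable reparametrization arguments and routine bookkeeping.
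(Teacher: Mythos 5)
Your overall strategy---construct a compatible collar system on $Q$ and write down an explicit face-preserving homeomorphism coordinate by coordinate---is viable, and it rests on the same local picture as the paper's proof (whose isotopy from $\widehat{Q}$ to $Q\times \prod_j 1_{(j)}$ is exactly a coordinate-wise rescaling in the local model $C^n_k(-1)\cong C^n_k(0)$ of Example~\ref{Exam:Isotopy-Right-Angled-Cell}). But the map $\phi$ you actually write down is broken. First, if $f_q$ is the \emph{minimal} face containing $q$, then $q$ lies in the open stratum $f_q^{\circ}$, so all of its collar coordinates $s_{i_1},\dots,s_{i_k}$ under $\psi_{f_q}$ vanish; your formula then degenerates to $\phi(q)=(q,\,0_{I_{f_q}},\,1_{[m]\setminus I_{f_q}})$, which is discontinuous (as $q$ in $Q^{\circ}$ approaches $F_i$, $\phi(q)\to(q_0,1,\dots,1)\neq\phi(q_0)$) and far from surjective. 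What you need are the collar coordinates of $q$ relative to the facets whose collar neighborhoods contain $q$, which are positive off the boundary.

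Second, and more seriously, even after that repair the formula for $r(q)$ pushes in the wrong direction. A point $(q',t)\in Q\times[0,1]^m$ lies in $\widehat{Q}$ only if $t_i=1$ for every $i$ with $q'\notin F_i$; hence whenever you set $t_i(q)=\rho(s_i)<1$ you must arrange $r(q)\in F_i$. Your rule replaces $s_i$ by $\min(s_i/\delta,1)$, which is strictly positive for $0<s_i<\delta$, so $r(q)\notin F_i$ while $t_i(q)<1$ and the image point is not in $\widehat{Q}$ at all. The inner $\delta$-collar must be collapsed \emph{onto} the facet, not stretched away from it: replace $\min(s_i/\delta,1)$ by, say, $\max((s_i-\delta)/(1-\delta),\,0)$, so that $\{s_i\le\delta\}$ is carried onto the attached piece $F_i\times[0,1]_{(i)}$ via the coordinate $\rho(s_i)$, and $\{s_i\ge\delta\}$ onto $Q\times 1_{(i)}$; in the one-facet local model this is just the evident parametrization of the L-shape $(\{0\}\times[0,1])\cup([0,1)\times\{1\})$ by $[0,1)$. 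With these corrections, and granting the compatible collar system that you rightly identify as the substantive input (the paper sidesteps it by shrinking the manifestly product-structured pieces $f\times[0,1]^{I_f}$ inside $Q\times[0,1]^m$ rather than building collars inside $Q$), the argument does go through.
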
 
 \begin{proof}
  For any face $f$ of $Q$ and $0\leq t\leq 1$, let 
  \begin{align*}
        \widehat{f}(t) &=  f 
       \times \prod_{j\in I_f} [t,1]_{(j)} 
   \times \prod_{j\in [m]\backslash I_f} 1_{(j)}\\
     \widehat{Q}(t) &= \bigcup_{f\in \mathcal{S}_Q}  \widehat{f}(t) \, \subseteq\, Q\times [t,1]^m.
   \end{align*}
  
   Then $\widehat{Q}(t)$ determines an isotopy (see Figure~\ref{p:Extend-Cell}) from $\widehat{Q}(0) = \widehat{Q}$ to 
   $$\widehat{Q}(1)= \bigcup_{f\in \mathcal{S}_Q} \Big( f \times \prod_{j\in [m]} 1_{(j)} \Big) = Q\times \prod_{j\in [m]} 1_{(j)}\cong Q.$$    
   
  Around a codimension-$k$ stratum of $Q$, the isotopy $\widehat{Q}(t)$ is locally equivalent to the standard isotopy
    from $C^n_k(-1)$ to $C^n_k(0)$ defined in Example~\ref{Exam:Isotopy-Right-Angled-Cell}.

 Clearly, the isotopy $\widehat{Q}(t)$ sends each face
   $\widehat{f}$ of $\widehat{Q}$ to
 $f \times \prod_{j\in [m]} 1_{(j)}$. 
   So under the identification of $Q\times \prod_{j\in [m]} 1_{(j)}$ with $Q$, $\widehat{Q}$ is homeomorphic to $Q$ as a manifold with corners. The lemma is proved.
 \end{proof}

   \begin{exam} \label{Exam:Isotopy-Right-Angled-Cell}
  Let $C^n_k(0)$ and $C^n_k(-1)$ be two subspaces of $\R^n$ defined by
     $$ C^n_k(0) := \{ (x_1,\cdots, x_n)\in \R^n\,|\, 0\leq x_1,\cdots, x_k < 1, 
         -1 < x_{k+1},\cdots, x_n < 1 \}. $$
    $$ C^n_k(-1) := \{ (x_1,\cdots, x_n)\in \R^n \,|\, -1\leq x_1,\cdots, x_k < 1, 
         -1 < x_{k+1},\cdots, x_n < 1 \}. $$    
            
    There is a strong deformation retraction from $C^n_k(-1)$ to $C^n_k(0)$ 
    defined by 
    \begin{equation*} 
      H(x_1,\cdots,x_n,t) = \big(\delta_{x_1}(t)\cdot x_1, \cdots, \delta_{x_k}(t)\cdot x_k, x_{k+1},\cdots, x_n \big),
     \end{equation*}
 for any $(x_1,\cdots, x_n)\in C^n_k(-1)$ and $t\in [0,1]$, 
     where 
     $$\delta_x(t) =   \begin{cases}
   1-t ,  &  \text{if $x< 0$}; \\
   1 ,  &  \text{if $x\geq 0$}.
 \end{cases} $$
 
  It is easy to see that for any $t\in [0,1]$, the image of $H(\ ,t)$ is
 $$ C^n_k(t-1) = \{ (x_1,\cdots, x_n)\,|\, t-1\leq x_1,\cdots, x_k < 1, 
         -1 < x_{k+1},\cdots, x_n < 1 \}. $$
  So $H$ actually defines an isotopy from $C^n_k(-1)$ to $C^n_k(0)$ (see Figure~\ref{p:Isotopy}).  
    \end{exam}  
      
   \begin{figure}
         \includegraphics[width=0.44\textwidth]{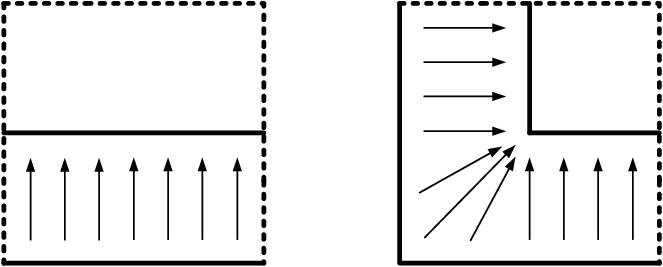}\\
          \caption{Isotopy from $C^n_k(-1)$ to $C^n_k(0)$}\label{p:Isotopy}
      \end{figure}

  \subsection{Embedding $\mathcal{Z}_Q$ into $Q\times (D^2)^m$}
  \ \vskip .1cm
  
  Using the above rim-cubicalization of $Q$ in $Q\times [0,1]^m$, we can embed the manifold $\mathcal{Z}_Q$ into $Q\times (D^2)^m$ where $D^2 =\{ z\in \mathbb{C}\,|\,\| z \| \leq 1\}$ is the unit disk.
   
   In the following, we consider $[0,1]$ as a subset of $D^2$ and the cube $[0,1]^m$ as a subset of $(D^2)^m\subset \mathbb{C}^m$.    
  For any $j\in [m]$, let $S^1_{(j)}$ and $D^2_{(j)}$ denote the corresponding spaces indexed by $j$.
  
  There is a canonical action of $(S^1)^m$ on $Q\times (D^2)^m$ defined by
  $$(g_1,\cdots, g_m)\cdot (x, z_1,\cdots, z_m) = (x,g_1z_1,\cdots, g_mz_m)$$
  where $x\in Q$, $g_j\in S^1_{(j)}$ and $z_j\in D^2_{(j)}$ for $1\leq j \leq m$. The orbit space of this action can be identified with $Q\times [0,1]^m$. We denote the quotient map by
  $$ p: Q\times (D^2)^m\rightarrow Q\times [0,1]^m.  $$
  
  For any face $f$ of $Q$, we define
    \begin{equation} \label{Equ:D2S1-Sigma-I}
      (D^2,S^1)^f := p^{-1}(\widehat{f}) = f
       \times \prod_{j\in I_f} D^2_{(j)} 
   \times \prod_{j\in [m]\backslash I_f} S^1_{(j)}\,\subseteq \, Q\times (D^2)^m,
   \end{equation}
     \begin{align} \label{Equ:D2S1-Q-Hat}
  (D^2,S^1)^{Q} 
  &:= \bigcup_{f\in \mathcal{S}_Q} (D^2,S^1)^f = \bigcup_{f\in \mathcal{S}_Q} \Big(
  f \times \prod_{j\in I_f} D^2_{(j)} 
   \times \prod_{j\in [m]\backslash I_f} S^1_{(j)} \Big).
   \end{align}
   
  There is a canonical action of $(S^1)^m$
   on $(D^2,S^1)^{Q}$ induced by the canonical action of $(S^1)^m$ on $Q\times (D^2)^m$. 
 
     \begin{lem} \label{Lem:Homeo-D2S1-Q-hat-ZQ}
     $(D^2,S^1)^{Q}$ is equivariantly homeomorphic to
     $\mathcal{Z}_Q$.
   \end{lem}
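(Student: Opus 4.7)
The strategy is to realize both $(D^2,S^1)^Q$ and $\mathcal Z_Q$ as twisted $(S^1)^m$-products with matching equivalence relations, and then transfer from one orbit space to the other via Lemma~\ref{Lem:Homeo-Q-Q-hat}. Since the construction of $(D^2,S^1)^Q$ uses the standard coordinate action of $(S^1)^m$ on $(D^2)^m$, I would normalize by taking $\lambda(F_j)=e_j$; any unimodular $\lambda$ reduces to this case by the induced automorphism of $(S^1)^m$.

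First I would analyze the orbit data of $(D^2,S^1)^Q$. The standard $(S^1)^m$-action on $Q\times(D^2)^m$ preserves each piece $(D^2,S^1)^f$ and hence $(D^2,S^1)^Q$, and the restriction of $p$ is the orbit projection. The stabilizer of $(x,z_1,\dots,z_m)$ is $\prod_{j:\,z_j=0}S^1_{(j)}$, which depends only on the image $\hat x=(x,|z_1|,\dots,|z_m|)\in\widehat Q$; writing $\hat x=(x,t_1,\dots,t_m)$, the stabilizer at $\hat x$ is $\prod_{j:\,t_j=0}S^1_{(j)}$.

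Using $[0,1]\subset D^2$ coordinatewise, I would view $\widehat Q\hookrightarrow(D^2,S^1)^Q$ as a continuous section of $p$, so that the equivariant translation map
\begin{equation*}
\Theta\colon\widehat Q\times(S^1)^m\longrightarrow (D^2,S^1)^Q,\qquad \big((x,t_1,\dots,t_m),\,g\big)\longmapsto (x,g_1t_1,\dots,g_mt_m),
\end{equation*}
is a continuous equivariant surjection and $\Theta(\hat x,g)=\Theta(\hat x,g')$ exactly when $g(g')^{-1}$ lies in the stabilizer at $\hat x$. This identifies $(D^2,S^1)^Q$ equivariantly with $(\widehat Q\times(S^1)^m)/\approx$, where $(\hat x,g)\approx(\hat x,g')$ iff $g(g')^{-1}\in\prod_{j:\,t_j(\hat x)=0}S^1_{(j)}$. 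Now the homeomorphism $\phi\colon\widehat Q\to Q$ from Lemma~\ref{Lem:Homeo-Q-Q-hat} carries the outer codim-$1$ stratum $\{t_j=0\}\cap\widehat Q$ onto $F_j\subset Q$, so $t_j(\hat x)=0$ iff $\phi(\hat x)\in F_j$; consequently the stabilizer at $\hat x$ agrees with $\mathbb T^\lambda_{\phi(\hat x)}$, and $\phi\times\mathrm{id}$ carries $\approx$ onto the moment-angle relation $\sim$. Passing to quotients gives the desired equivariant homeomorphism.

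The main technical obstacle is justifying the facet correspondence $\phi(\{t_j=0\}\cap\widehat Q)=F_j$. Although this is intuitively clear from the isotopy of Lemma~\ref{Lem:Homeo-Q-Q-hat}, one must check using the local model of Example~\ref{Exam:Isotopy-Right-Angled-Cell} around a codimension-$k$ stratum that the simultaneous vanishing of $t_{j_1},\dots,t_{j_k}$ on $\widehat Q$ corresponds under $\phi$ precisely to the intersection of the $k$ facets $F_{j_1},\dots,F_{j_k}$ of $Q$ meeting there — a combinatorial bookkeeping that is essential for the isotropies to match. A secondary point is to confirm that the continuous bijection induced by $\Theta$ on the quotient has continuous inverse, which can be handled either by showing $\Theta$ is a closed map or by invoking the CW-structure assumption on $Q$.
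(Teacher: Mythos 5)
Your proposal is correct and takes essentially the same route as the paper: the paper's proof likewise restricts the map $(x,t,g)\mapsto(x,g_1t_1,\dots,g_mt_m)$ to $\widehat{Q}\times (S^1)^m$ (there denoted $\pi_Q$), identifies its image with $(D^2,S^1)^Q$ piece by piece via $\pi_Q(\widehat{f}\times(S^1)^m)=(D^2,S^1)^f$, and then invokes Lemma~\ref{Lem:Homeo-Q-Q-hat} to transfer the identification to $Q$. Your stabilizer bookkeeping and the facet correspondence $\{t_j=0\}\cap\widehat{Q}\leftrightarrow F_j$ are a more explicit phrasing of the same argument.
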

 \begin{proof}
      By Lemma~\ref{Lem:Homeo-Q-Q-hat}, it is equivalent to show that $(D^2,S^1)^{Q}$ is equivariantly homeomorphic to 
      $\mathcal{Z}_{\widehat{Q}}$. 
     We consider $[0,1)^m$ as a nice manifold with corners whose facets are $F^{\square}_1,\cdots, F^{\square}_m$ where
    $$ F^{\square}_i = 0_{(i)}\times \prod_{j\in [m]\backslash \{i\}} [0,1)_{(j)}, \ 1\leq i \leq m.$$
   
   It is clear that
  $\mathcal{Z}_{[0,1)^m} = [0,1)^m\times (S^1)^m\slash \sim$ is homeomorphic to $(D^2\backslash S^1)^m$.
  The quotient map $[0,1)^m\times (S^1)^m\rightarrow \mathcal{Z}_{[0,1)^m} =(D^2\backslash S^1)^m $ extends to a map $\pi: [0,1]^m\times (S^1)^m\rightarrow (D^2)^m$ which can be written explicitly as
  \begin{align} \label{Equ:Pi-square}
    \pi:  [0,1]^m \times (S^1)^m \ & \xlongrightarrow{}\
      (D^2)^m \\
  \big((t_1,\cdots, t_m), (g_1,\cdots g_m)\big)
   & \longmapsto (g_1t_1,\cdots, g_mt_m). \nonumber
    \end{align}
  Define
  $$\pi_Q = id_Q\times \pi : Q\times [0,1]^m\times (S^1)^m \rightarrow Q\times (D^2)^m$$
Notice that the facets of $\widehat{Q}$ are the intersections of 
  $\widehat{Q}$ with $ Q\times F^{\square}_1, \cdots,
  Q\times F^{\square}_m$:
   $$ \widehat{F}_i = \widehat{Q}\cap  \big(Q\times F^{\square}_i\big),\ 1\leq i \leq m. $$ 
 We can easily check that the restriction of $\pi_Q$
  to $\widehat{Q}\times (S^1)^m$ gives exactly $\mathcal{Z}_{\widehat{Q}}$, i.e.
   $$\mathcal{Z}_{\widehat{Q}} = \pi_Q(\widehat{Q}\times (S^1)^m).$$  
    
  Moreover, for any face $f$ of $Q$, we have
    \begin{align*}
      \pi_Q \big(\widehat{f} \times (S^1)^m \big) &= \pi_Q \Big(
      f
       \times \prod_{j\in I_f} \big( [0,1]_{(j)} \times S^1_{(j)} \big)
   \times \prod_{j\in [m]\backslash I_f} S^1_{(j)} \Big) \\
     &= f
       \times \prod_{j\in I_f} D^2_{(j)} 
   \times \prod_{j\in [m]\backslash I_f} S^1_{(j)}
      =  (D^2,S^1)^f. 
      \end{align*}
      
 So we have a homeomorphism
    \begin{align*}
      \mathcal{Z}_{\widehat{Q}} \cong \pi_Q(\widehat{Q}\times (S^1)^m) &=  \bigcup_{f\in \mathcal{S}_Q} \pi_Q \big( \widehat{f} \times (S^1)^m \big) =
   \bigcup_{f\in \mathcal{S}_Q} (D^2,S^1)^f =
     (D^2,S^1)^{Q}. 
     \end{align*}
    
    Clearly, the above homeomorphism
    is equivariant with respect to the canonical actions
    of $(S^1)^m$ on $\mathcal{Z}_{\widehat{Q}}$
    and $(D^2,S^1)^{Q}$. So the lemma is proved.
  \end{proof}

   \subsection{Viewing $\mathcal{Z}_Q$ as a colimit of CW-complexes}    
    \ \vskip .1cm
    
    By Lemma~\ref{Lem:Homeo-D2S1-Q-hat-ZQ}, studying the stable decomposition of $\mathcal{Z}_Q$ is equivalent to studying that for $(D^2,S^1)^{Q}$. To do the stable decomposition as in~\cite{BBCG10}, we want to first think of $(D^2,S^1)^{Q}$ as the colimit of 
a diagram of CW-complexes over a finite \emph{poset} (partially ordered set). 
The following are some basic definitions (see~\cite{ZiegZiv93}).
   
   \begin{itemize}
    \item Let $CW$ be the category of CW-complexes and continuous maps.
    
    \item  Let $CW_*$ be the category of based CW-complexes and based continuous maps. 
    
     \item A \emph{diagram} $\mathcal{D}$ of CW-complexes or based CW-complexes over a finite poset $\mathcal{P}$ is a functor 
   $$\mathcal{D} : \mathcal{P}\rightarrow CW \ 
   \text{or}\ CW_*$$
    such that for every $p\leq p'$ in $\mathcal{P}$, there is a map 
     $d_{pp'}: \mathcal{D}(p')\rightarrow \mathcal{D}(p)$ with
     $$d_{pp}=id_{\mathcal{D}(p)},\ \ d_{pp'} d_{p'p''} = d_{pp''},\ \forall\, p\leq p'\leq p''.$$

   \item The \emph{colimit} of $\mathcal{D}$ is the space
     \[ \mathrm{colim}(\mathcal{D}):= \Big(\coprod_{p\in \mathcal{P}} \mathcal{D}(p)\Big) \Big\slash \sim \]
     where $\sim$ denotes the equivalence relation generated by requiring that for each $x\in \mathcal{D}(p')$,
     $x\sim d_{pp'}(x)$ for every $p < p'$.  
    \end{itemize}
    
   To think of $(D^2,S^1)^{Q}$ as a colimit of CW-complexes, we need to
  introduce a finer decomposition of $(D^2,S^1)^{Q}$ as follows. By the notations in Section~\ref{Subsec:Rim-Cubical}, for any face $f$ of $Q$ and any subset $L\subseteq I_f\subseteq [m]$, let
   \begin{align} \label{Equ:D2S1-Sigma-I-L}
      (D^2,S^1)^{(f,L)} := f \times \prod_{j\in I_f\backslash L} D^2_{(j)} 
   \times \prod_{j\in [m]\backslash (I_f\backslash L)} S^1_{(j)}.
   \end{align}   
   Clearly,  $ (D^2,S^1)^{(f,L)} \subseteq   (D^2,S^1)^{(f,L')}$
   if and only if $L\supseteq L'$. So we have
     \[
        (D^2,S^1)^f = (D^2,S^1)^{(f,\varnothing)} =\bigcup_{L\subseteq I_f}  (D^2,S^1)^{(f,L)},
     \]
  \begin{equation} \label{Equ:D2S1-Q-Hat-2}
  (D^2,S^1)^{Q} 
  = \bigcup_{f \in \mathcal{S}_Q} (D^2,S^1)^f 
  = \bigcup_{f\in \mathcal{S}_Q}\bigcup_{L\subseteq I_f}  (D^2,S^1)^{(f,L)} .
   \end{equation}

  Corresponding to this decomposition, we define a poset associated to $Q$ by
  \begin{equation}\label{Equ:Poset-Q}
    \mathcal{P}_{Q} = \{ (f,L)\,|\,
    f\in \mathcal{S}_Q, L\subseteq I_f\subseteq[m] \}
    \end{equation}
    where $(f, L)\leq (f', L')$ if and only if
    $f\supseteq f'$ and $I_f\backslash L \supseteq I_{f'}\backslash L'$.
    It follows from the definition~\eqref{Equ:D2S1-Sigma-I-L} that:    
    $$(f, L)\leq (f', L') \Longleftrightarrow  (D^2,S^1)^{(f',L')} \subseteq (D^2,S^1)^{(f, L)}.$$
        Note that $\mathcal{P}_{Q}$ is a finite poset since by our convention $Q$ only has finitely many faces.

 \begin{defi}
 Let $\mathbf{D}: \mathcal{P}_{Q}\rightarrow CW$ be a diagram of CW-complexes where  
  $$\mathbf{D}((f,L)) = (D^2,S^1)^{(f,L)}, \ \forall
  (f,L)\in \mathcal{P}_{Q}. $$
 For any $(f,L)\leq (f',L')\in \mathcal{P}_{Q}$, $d_{(f,L),(f',L')}:
  \mathbf{D}((f',L')) \rightarrow \mathbf{D}((f,L))$ is the natural inclusion. 
 \end{defi} 

 Clearly, $(D^2,S^1)^{Q}$ is the colimit of the diagram $\mathbf{D}$. So we have
\begin{equation} \label{Equ:ZQ-Colimit}
 \mathcal{Z}_{Q} \cong (D^2,S^1)^{Q} = \mathrm{colim}(\mathbf{D}) = 
  \bigcup_{(f,L)\in\mathcal{P}_{Q}}  (D^2,S^1)^{(f,L)}.
\end{equation}
 
 \begin{rem} \label{Rem:Base-Points}
  Here we do not write $(D^2,S^1)^{Q}$ as the colimit of a diagram of based CW-complexes. This is because in general it is not possible to choose a basepoint
in each $(D^2,S^1)^{(f,L)}$ to adapt to the colimit construction of a diagram in $CW_*$.
 \end{rem}
 
 \subsection{Stable decomposition of $\mathcal{Z}_Q$}
 \label{Subsec:Stable-Decomp}
 \ \vskip .1cm
 
 First of all, let us recall a well-known theorem (see~\cite{James55, Whitehead56}) which allows us to decompose the
  Cartesian product of a collection of based CW-complexes
 into a wedge of spaces after doing a suspension. 
 
  Let $(X_i, x_i)$ for $1\leq i\leq m$ be based CW-complexes. For $I =\{ i_1,\cdots, i_k\}  \subseteq [m]$ with $1\leq i_1 < \cdots < i_k \leq m$, define
   \[ \widehat{X}^I = X_{i_1}\wedge \cdots \wedge X_{i_k}\] 
   which is the quotient space of $X^I=X_{i_1}\times \cdots \times X_{i_k}$ by the subspace given by 
    \[ \text{$FW(X^I) =\{ (y_{i_1},\cdots, y_{i_k}) \in X^I\,|\, y_{i_j} \ \text{is the basepoint $x_{i_j}\in X_{i_j}$ for at least one}\ i_j  \}$.}\]
    
 \begin{thm} \label{Thm:Stable-Decomp-Lemma}
  Let $(X_i, x_i)$, $1\leq i\leq m$, be based connected CW-complexes.
   There is a based, natural homotopy equivalence
  $$ h:  \mathbf{\Sigma}(X_1\times \cdots\times X_m)\rightarrow \mathbf{\Sigma}\Big(\bigvee_{\varnothing \neq I \subseteq [m]} \widehat{X}^I \Big) $$
  where $I$ runs over all the non-empty subsets of 
   $[m]$. Furthermore, the map $h$
commutes with colimits.
 \end{thm}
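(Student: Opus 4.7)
My plan is to prove the theorem by induction on $m$, starting from the classical two-factor case $\mathbf{\Sigma}(X_1\times X_2)\simeq \mathbf{\Sigma} X_1\vee\mathbf{\Sigma} X_2\vee\mathbf{\Sigma}(X_1\wedge X_2)$ due to James-Whitehead. For this base case, my approach would be to work with the cofiber sequence $X_1\vee X_2\hookrightarrow X_1\times X_2\twoheadrightarrow X_1\wedge X_2$, which is valid since $X_1\vee X_2$ sits in $X_1\times X_2$ as a based subcomplex. After applying the reduced suspension $\mathbf{\Sigma}$, I would construct the splitting map $h$ directly out of the natural collapse and projection maps: the two projections $X_1\times X_2\to X_i$ and the smash quotient $X_1\times X_2\to X_1\wedge X_2$ suspend to maps into $\mathbf{\Sigma} X_1$, $\mathbf{\Sigma} X_2$, and $\mathbf{\Sigma}(X_1\wedge X_2)$, and these can be assembled using the co-$H$-space comultiplication on $\mathbf{\Sigma}(X_1\times X_2)$ into a single map $h$ to the wedge. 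One then verifies $h$ is a homotopy equivalence by checking on integral homology (via the K\"unneth theorem, which gives $H_*(X_1\times X_2)\cong H_*(X_1\vee X_2\vee (X_1\wedge X_2))$ for connected spaces) and invoking Whitehead's theorem, noting that reduced suspensions of based CW-complexes are simply connected.

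For the inductive step, write $X^{[m]}=X^{[m-1]}\times X_m$ and apply the two-factor splitting to obtain
\[\mathbf{\Sigma} X^{[m]}\simeq \mathbf{\Sigma} X^{[m-1]}\vee\mathbf{\Sigma} X_m\vee\mathbf{\Sigma}(X^{[m-1]}\wedge X_m).\]
The inductive hypothesis provides $\mathbf{\Sigma} X^{[m-1]}\simeq\bigvee_{\varnothing\neq I\subseteq[m-1]}\mathbf{\Sigma}\widehat{X}^I$. Smashing this equivalence with $X_m$ and using the standard identities $\mathbf{\Sigma}(A\wedge B)\simeq \mathbf{\Sigma} A\wedge B$ together with $\bigl(\bigvee_\alpha A_\alpha\bigr)\wedge B\simeq\bigvee_\alpha (A_\alpha\wedge B)$ (valid for based CW-complexes) yields
\[\mathbf{\Sigma}(X^{[m-1]}\wedge X_m)\simeq\bigvee_{\varnothing\neq I\subseteq[m-1]}\mathbf{\Sigma}\widehat{X}^{I\cup\{m\}}.\]
Reindexing the three pieces according to whether a nonempty subset $I\subseteq[m]$ omits $m$, equals $\{m\}$, or contains $m$ together with something else produces exactly $\bigvee_{\varnothing\neq I\subseteq[m]}\mathbf{\Sigma}\widehat{X}^I$. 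Naturality of $h$ is automatic since it is assembled from naturally-defined projections, smash quotients, and the co-$H$-comultiplication on a reduced suspension. The colimit-compatibility then follows because $\mathbf{\Sigma}=S^1\wedge-$, $\wedge$, and $\vee$ all preserve colimits in the based CW category (being left adjoints on the pointed category), so any natural transformation between such functors is automatically compatible with colimits of diagrams over a finite poset.

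I expect the main obstacle to be ensuring that $h$ is produced as a \emph{genuine} natural transformation, rather than merely a family of homotopy equivalences defined one system $(X_1,\ldots,X_m)$ at a time, because the subsequent application to \eqref{Equ:ZQ-Colimit} requires $h$ to be compatible with the colimit of the diagram $\mathbf{D}:\mathcal{P}_{Q}\to CW$. This forces me to fix a \emph{specific} splitting map at each inductive step, defined by an explicit formula in terms of the co-$H$-comultiplication, rather than choosing an abstract homotopy inverse. The connectedness assumption on each $X_i$ enters precisely at this point: it guarantees that the cofiber $X_i\wedge X_j$ is connected and hence that the relevant suspensions are simply connected, allowing Whitehead's theorem to upgrade the K\"unneth-level homological isomorphism into a genuine homotopy equivalence, and also ensures that the fat wedge inclusions behave well enough for the splittings at each step to compose coherently.
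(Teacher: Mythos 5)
The paper does not actually prove this statement: it is recalled as a classical result of James and Whitehead (and appears as a theorem in \cite{BBCG10}), so there is no in-paper argument to compare yours against. Your inductive proof is the standard one and is essentially correct: the base case via the cofibration $X_1\vee X_2\hookrightarrow X_1\times X_2\to X_1\wedge X_2$, an explicit $h$ built from the pinch comultiplication composed with $\mathbf{\Sigma}\pi_1\vee\mathbf{\Sigma}\pi_2\vee\mathbf{\Sigma}q$, a homology check (the induced map is ``triangular'' with identity diagonal blocks because the projections kill the smash summand and the quotient kills the wedge summands), and Whitehead's theorem; then the inductive step by smashing the equivalence for $X^{[m-1]}$ with $X_m$ and reindexing. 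Two small points deserve care. First, your parenthetical claim that reduced suspensions of based CW-complexes are simply connected is false in general ($\mathbf{\Sigma}S^0=S^1$); it holds here only because every $X_i$ is assumed connected, which is exactly where that hypothesis enters, so you should state it that way. Second, for the colimit statement the relevant mechanism is simply that $h$ is a genuine natural transformation between functors $CW_*^m\to CW_*$, so applying it objectwise to a diagram induces a map of colimits; the appeal to $-\times X$ preserving colimits is not what is needed (and joint products do not commute with arbitrary colimits), while the identification $\mathrm{colim}\,\mathbf{\Sigma}\simeq\mathbf{\Sigma}\,\mathrm{colim}$ is handled separately in the paper by \cite[Theorem 4.3]{BBCG10}. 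Your own diagnosis that one must fix an explicit $h$ at each stage rather than an abstract homotopy inverse is exactly right and is the only real subtlety in the argument.
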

 
  In our proof later, we need a slightly generalized version of Theorem~\ref{Thm:Stable-Decomp-Lemma}. Before that, let us first prove three simple lemmas.
  
  \begin{lem} \label{Lem:Smash-Contract}
 If $(X,x_0)$ and $(Y,y_0)$ are based CW-complexes with $X$ contractible, then $X\wedge Y$ is also contractible.
\end{lem}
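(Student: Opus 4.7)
The plan is to upgrade the (free) contractibility of $X$ to a \emph{based} null-homotopy, and then contract $X\wedge Y$ slotwise in the first variable.

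First I would observe that since $(X,x_0)$ is a based CW-complex, the inclusion $\{x_0\}\hookrightarrow X$ is a cofibration. Together with $X$ being contractible, this implies that $X$ is null-homotopic \emph{rel} $x_0$: there is a homotopy $H\colon X\times I\to X$ with $H(x,0)=x$, $H(x,1)=x_0$, and $H(x_0,t)=x_0$ for all $t\in I$. One obtains $H$ from any free null-homotopy $H_0$ by applying the homotopy extension property to correct the path $t\mapsto H_0(x_0,t)$ back to the constant path at $x_0$, and then concatenating.

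Next I would use $H$ to define a null-homotopy of the smash product. Set
\[
\widetilde{H}\colon (X\wedge Y)\times I\longrightarrow X\wedge Y,\qquad \widetilde{H}\bigl([x,y],t\bigr)=[H(x,t),y].
\]
The based condition $H(x_0,t)=x_0$ ensures that $\widetilde{H}$ descends to the quotient $X\wedge Y=(X\times Y)/(X\vee Y)$: any point of the collapsed subspace $X\vee Y$ has either $x=x_0$ (so $H(x,t)=x_0$) or $y=y_0$, so its image is still the basepoint. At $t=0$, $\widetilde{H}$ is the identity; at $t=1$, it sends every $[x,y]$ to $[x_0,y]=*$. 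Hence $X\wedge Y$ is contractible.

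The only nontrivial step is the first one, promoting free contractibility to a based null-homotopy; this is where the CW (equivalently, well-pointed) hypothesis is used, and everything afterwards is formal manipulation of the smash quotient.
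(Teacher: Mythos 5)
Your proof is correct and follows essentially the same route as the paper: the paper simply says that the deformation retraction from $X$ to $x_0$ induces a deformation retraction of $X\wedge Y$ onto its basepoint. Your extra step of using the cofibration $\{x_0\}\hookrightarrow X$ to upgrade free contractibility to a basepoint-preserving null-homotopy is exactly the detail the paper leaves implicit, and the rest matches.
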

 \begin{proof} 
 The deformation retraction from $X$ to $x_0$ naturally induces a deformation retraction from 
 $X\wedge Y = X\times Y \slash (\{x_0\}\times Y)\cup (X\times\{y_0\})$ to its canonical basepoint 
$[(x_0,y_0)]=[(\{x_0\}\times Y)\cup (X\times\{y_0\})]$.
\end{proof}

  \begin{lem} \label{Lem:Suspen-Union}
   Suppose a CW-complex $X$ has $N$ connected components $X_1,\cdots, X_N$. Then there is a homotopy equivalence
   $$ \mathbf{\Sigma}(X)\simeq \mathbf{\Sigma}(X_1)\vee \cdots \vee \mathbf{\Sigma}(X_N) \vee \bigvee_{N-1} S^1. $$
   where $\bigvee_{N-1} S^1$ is the wedge sum of $N-1$ copies of $S^1$.
  \end{lem}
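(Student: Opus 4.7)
The strategy is to exhibit a based homeomorphism
\[
X \;\cong\; X_1 \,\vee\, (X_2)_+ \,\vee\, \cdots \,\vee\, (X_N)_+
\]
where $Y_+$ denotes $Y$ with a disjoint basepoint added, and then to apply two standard splittings for the reduced suspension: distribution over wedge sums of well-pointed CW-complexes, and $\mathbf{\Sigma}(Y_+) \simeq \mathbf{\Sigma}(Y) \vee S^1$.

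To obtain the wedge decomposition, I would choose a $0$-cell $x_i \in X_i$ for each $i$ and take $x_1$ as the basepoint of $X$. In the wedge on the right, the basepoint $x_1 \in X_1$ is identified with the added basepoint $*_i$ of each $(X_i)_+$ for $i \geq 2$; since each $*_i$ lies outside $X_i$, these identifications merely re-glue $X_2, \ldots, X_N$ to $X_1$ at the single point $x_1$, which recovers the original disjoint union $X_1 \sqcup \cdots \sqcup X_N$ with basepoint $x_1$. All summands are well-pointed since their chosen basepoints are $0$-cells, so applying reduced suspension gives
\[
\mathbf{\Sigma}(X) \;\simeq\; \mathbf{\Sigma}(X_1) \,\vee\, \mathbf{\Sigma}\bigl((X_2)_+\bigr) \,\vee\, \cdots \,\vee\, \mathbf{\Sigma}\bigl((X_N)_+\bigr).
\]

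It then remains, for each $i \geq 2$, to apply the standard splitting $\mathbf{\Sigma}\bigl((X_i)_+\bigr) \simeq \mathbf{\Sigma}(X_i) \vee S^1$. This splitting follows because the based inclusion $S^0 \hookrightarrow (X_i)_+$, sending the basepoint of $S^0$ to the added point and the other point to a chosen $0$-cell of $X_i$, admits a retraction onto $S^0$; hence the associated cofibre sequence $S^0 \to (X_i)_+ \to X_i$ splits after one suspension, contributing a wedge summand $\mathbf{\Sigma}(S^0) = S^1$ and a cofibre $\mathbf{\Sigma}(X_i)$. Substituting the $N-1$ splittings into the formula above and regrouping yields the claimed decomposition. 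The proof is essentially formal; the only delicate point is well-pointedness of the summands, and that is automatic once one uses $0$-cells as basepoints, so I do not foresee any substantive obstacle.
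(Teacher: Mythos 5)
Your proposal is correct. The paper offers no argument at all here --- its proof is the single line ``This follows easily from the definition of reduced suspension,'' the intended picture presumably being that in $\mathbf{\Sigma}(X)=X\times[0,1]\big/\big(X\times\{0,1\}\cup\{x_0\}\times[0,1]\big)$ each component $X_i$ not containing the basepoint contributes its unreduced suspension with the two cone points glued together at the basepoint, which is $\mathbf{\Sigma}(X_i)\vee S^1$ up to homotopy. Your route --- the based homeomorphism $X\cong X_1\vee (X_2)_+\vee\cdots\vee (X_N)_+$, the fact that $-\wedge S^1$ distributes over wedges, and the standard splitting $\mathbf{\Sigma}(Y_+)\simeq\mathbf{\Sigma}(Y)\vee S^1$ coming from the retractible cofibre sequence $S^0\to Y_+\to Y$ --- is a clean and fully rigorous formalization of exactly that computation, and your attention to well-pointedness (taking $0$-cells as basepoints) is the right hygiene. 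No gaps.
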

  \begin{proof}
    This follows easily from the definition of reduced suspension.
  \end{proof}
 
 \begin{lem} \label{Lem:Smash-Decomp}
   Let $X_1=X'_1\cup \{x_1\}$ where $X'_1$ is a connected based CW-complex and $x_1\notin X'_1$ is the basepoint of $X_1$.

  \begin{itemize}
   \item[(a)] For any connected based CW-complex $X_2$, 
   there is a homotopy equivalence 
   $$ \mathbf{\Sigma}(X_1\wedge X_2) \simeq \mathbf{\Sigma}(X_2)
  \vee \mathbf{\Sigma}(X'_1\wedge X_2).$$

  \item[(b)] If $X_2 = X'_2\cup \{x_2\}$ where 
  $X'_2$ is a connected based CW-complex and $x_2\notin X'_2$ is the basepoint of $X_2$, then 
   $ X_1\wedge X_2 $ is the disjoint union of $
         X'_1\wedge X'_2 $ and a point represented by
         $\{x_1\}\times \{x_2\}$.
      \end{itemize}   
 \end{lem}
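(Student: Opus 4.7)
My plan is to unwind the definition of the smash product directly: part (a) reduces to the stable splitting of a split cofibration, while part (b) is a straightforward inspection of a disjoint-union decomposition.

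For part (a), I begin by identifying $X_1 = X'_1 \cup \{x_1\}$ with the based CW-complex $(X'_1)_+$ obtained from $X'_1$ by adjoining a disjoint base-point $x_1$. There is a canonical split cofibration of based CW-complexes
\[ S^0 \;\hookrightarrow\; (X'_1)_+ \;\twoheadrightarrow\; X'_1, \]
where $S^0 = \{x'_1, x_1\}$ (with $x'_1 \in X'_1$ the base-point of $X'_1$), the quotient collapses $S^0$ to the base-point of $X'_1$, and the retraction $(X'_1)_+ \to S^0$ sends $X'_1$ to $x'_1$ and fixes $x_1$. Smashing with $X_2$ yields a split cofibration of based connected CW-complexes
\[ X_2 \;\cong\; S^0 \wedge X_2 \;\hookrightarrow\; X_1 \wedge X_2 \;\twoheadrightarrow\; X'_1 \wedge X_2. \]
The one delicate step is passing from this split cofibration to a wedge decomposition after suspension; I would invoke the classical Puppe-sequence argument, namely that the connecting map $X'_1 \wedge X_2 \to \mathbf{\Sigma} X_2$ is null-homotopic in the presence of a section, so the Puppe sequence splits after one reduced suspension and yields $\mathbf{\Sigma}(X_1 \wedge X_2) \simeq \mathbf{\Sigma}(X_2) \vee \mathbf{\Sigma}(X'_1 \wedge X_2)$.

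For part (b), the computation is direct from the definitions. Because $x_1 \notin X'_1$ and $x_2 \notin X'_2$, the product splits as a disjoint union
\[ X_1 \times X_2 \;=\; (X'_1 \times X'_2) \;\sqcup\; (X'_1 \times \{x_2\}) \;\sqcup\; (\{x_1\} \times X'_2) \;\sqcup\; \{(x_1, x_2)\}, \]
and the subspace $(X_1 \times \{x_2\}) \cup (\{x_1\} \times X_2)$ to be collapsed in $X_1 \wedge X_2$ is precisely the union of the last three pieces, disjoint from $X'_1 \times X'_2$. Collapsing this subspace to the single base-point therefore leaves $X'_1 \times X'_2$ intact and contributes one extra isolated point (the image of $\{(x_1, x_2)\}$), giving the desired disjoint-union decomposition.
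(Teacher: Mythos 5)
Your part~(a) is correct but follows a genuinely different route from the paper. The paper first rewrites $X_1\wedge X_2\cong X'_1\times X_2\slash (X'_1\times\{x_2\})$, applies the James--Whitehead splitting (Theorem~\ref{Thm:Stable-Decomp-Lemma}) to $\mathbf{\Sigma}(X'_1\times X_2)\simeq \mathbf{\Sigma}(X'_1)\vee\mathbf{\Sigma}(X_2)\vee\mathbf{\Sigma}(X'_1\wedge X_2)$, and then collapses the $\mathbf{\Sigma}(X'_1)$ summand. You instead smash the split cofibration $S^0\hookrightarrow (X'_1)_+\twoheadrightarrow X'_1$ with $X_2$ and use the standard Puppe-sequence fact that a cofibration admitting a retraction splits after one suspension. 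Both arguments are valid; yours is more self-contained (no appeal to the James--Whitehead theorem), while the paper's reuses machinery it has already set up and whose naturality it needs elsewhere. One terminological nit: the splitting datum you use is a \emph{retraction} of the inclusion $X_2\to X_1\wedge X_2$, not a section of the projection; since you construct exactly that retraction, the argument is fine, but the phrase ``in the presence of a section'' should be adjusted.

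In part~(b) your set-level computation is right, but your last sentence conceals a real discrepancy: what you obtain is that $X_1\wedge X_2$ is the disjoint union of $X'_1\times X'_2$ (the \emph{product}) with a point, whereas the statement you are asked to prove asserts the non-base-point piece is $X'_1\wedge X'_2$ (the \emph{smash}). These are not homeomorphic in general --- for $X'_1=X'_2=S^1$ one is a torus and the other a $2$-sphere --- so declaring your decomposition to be ``the desired'' one is not justified. In fact the printed statement appears to be a typo for $X'_1\times X'_2$: your version is the true one, and it is also the version the paper actually needs later, since in the proof of Theorem~\ref{Thm:Generalized-Stable-Decomp} the identification $\widehat{X}^I= Y^I\sqcup\{x^I\}$ (with the full product $Y^I$, subsequently re-split by Theorem~\ref{Thm:Stable-Decomp-Lemma}) is what makes the count of wedge summands match~\eqref{Equ:Product-Decomp-Wedge}. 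So you should state explicitly that you are proving the corrected statement with $X'_1\times X'_2$; as a proof of the literal claim with $X'_1\wedge X'_2$ the final identification is a gap that cannot be closed.
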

 \begin{proof}
  (a) By the definition of smash product, we have a homeomorphism
     \begin{align*}
       X_1\wedge X_2 &= (X'_1\cup\{ x_1\})\times X_2\slash
          \big( \{x_1\}\times X_2 \cup X'_1\times \{x_2\}\big)\\
          &\cong X'_1\times X_2\slash X'_1\times \{x_2\}.
      \end{align*}
 
   Then we have    
     \begin{align*}
      \mathbf{\Sigma}(X_1\wedge X_2) &= \mathbf{\Sigma}\big( X'_1\times X_2\slash X'_1\times \{x_2\}\big) \\ 
      & \simeq  \mathbf{\Sigma}(X'_1\times X_2) \slash 
       \mathbf{\Sigma}(X'_1\times \{x_2\}) \\
       \text{(by Theorem~\ref{Thm:Stable-Decomp-Lemma})} \  &\simeq   \Big( \mathbf{\Sigma}(X'_1) \vee \mathbf{\Sigma}(X_2)
  \vee \mathbf{\Sigma}(X'_1\wedge X_2) \Big) \Big\slash \mathbf{\Sigma}(X'_1) \\
      &\simeq \mathbf{\Sigma}(X_2)
  \vee \mathbf{\Sigma}(X'_1\wedge X_2) .
    \end{align*}
    
   (b) It follows directly from the definition of smash product. 
   \end{proof}

We can generalize Theorem~\ref{Thm:Stable-Decomp-Lemma}
    to the following form.
  
 \begin{thm} \label{Thm:Generalized-Stable-Decomp}
   Let $(X_i, x_i)$ for $1\leq i\leq m$ be based CW-complexes. Assume that for some $1\leq n \leq m$,  
   \begin{itemize}
    \item $X_i = Y_i \cup \{x_i\}$, $1\leq i \leq n$, where $Y_i$ is a connected CW-complex and $x_i\notin Y_i$. 
    
    \item $X_i$, $n+1\leq i \leq m$, is a connected CW-complex.
    \end{itemize}
    There is a based, natural homotopy equivalence
     which commutes with colimits:
  $$ h:  \mathbf{\Sigma}(X_1\times \cdots\times X_m)\rightarrow \mathbf{\Sigma}\Big(\bigvee_{\varnothing\neq I \subseteq [m]} \widehat{X}^I \Big). $$
 \end{thm}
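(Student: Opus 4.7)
The plan is to induct on $n$, the number of two-component factors $X_i = Y_i \cup \{x_i\}$. The base case $n = 0$ (all factors connected) is precisely Theorem~\ref{Thm:Stable-Decomp-Lemma}, so assume $n \geq 1$ and that the result is known for $n-1$.

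For the inductive step, write $X_1 = Y_1 \sqcup \{x_1\}$ and set $P = X_2 \times \cdots \times X_m$. Since $Y_1$ and $\{x_1\}$ are disjoint,
\[
X_1 \times \cdots \times X_m = (Y_1 \times P) \sqcup (\{x_1\} \times P),
\]
with the base-point in the second piece. I would first record a mild extension of Lemma~\ref{Lem:Suspen-Union}: for a based CW-complex of the form $Z = A \sqcup B$ with base-point in $B$, a direct unwinding of the reduced suspension gives
\[
\mathbf{\Sigma}(A \sqcup B) \simeq \mathbf{\Sigma}(A_+) \vee \mathbf{\Sigma}(B) \simeq \mathbf{\Sigma}(A) \vee \mathbf{\Sigma}(B) \vee S^1.
\]
Applied here,
\[
\mathbf{\Sigma}(X_1 \times \cdots \times X_m) \simeq \mathbf{\Sigma}(P) \vee \mathbf{\Sigma}(Y_1 \times P) \vee S^1,
\]
and the induction hypothesis applies both to $P$ (containing $n-1$ two-component factors) and to $Y_1 \times P$ (also $n-1$ two-component factors, with $Y_1$ as an additional connected factor).

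On the target side, I would split $\bigvee_{\varnothing \neq I \subseteq [m]} \mathbf{\Sigma}\widehat{X}^I$ according to whether $1 \in I$, and for every $I \ni 1$ apply Lemma~\ref{Lem:Smash-Decomp}(a) with $X_1$ and the connected smash $\widehat{X}^{I \setminus \{1\}}$:
\[
\mathbf{\Sigma}\widehat{X}^I \simeq \mathbf{\Sigma}\widehat{X}^{I \setminus \{1\}} \vee \mathbf{\Sigma}(Y_1 \wedge \widehat{X}^{I \setminus \{1\}}).
\]
The case $I = \{1\}$ is absorbed by the conventions $\widehat{X}^{\varnothing} = S^0$ and $\mathbf{\Sigma}(S^0) = S^1$. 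Carrying out the combinatorial bookkeeping, both the source and the target reduce to the same wedge: two copies of $\bigvee_{\varnothing \neq K \subseteq \{2,\ldots,m\}} \mathbf{\Sigma}\widehat{X}^K$ together with $\bigvee_{K \subseteq \{2,\ldots,m\}} \mathbf{\Sigma}(Y_1 \wedge \widehat{X}^K)$ and one copy of $S^1$, closing the induction.

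The naturality of $h$ and its compatibility with colimits are inherited from the corresponding properties of Theorem~\ref{Thm:Stable-Decomp-Lemma}, Lemma~\ref{Lem:Suspen-Union}, and Lemma~\ref{Lem:Smash-Decomp}, since every step is built from natural maps. The main technical obstacle will be the careful base-point bookkeeping through the disjoint-union decomposition and the half-smash forms $A_+ \wedge Z$, and verifying that the wedge-summand matching on the two sides is induced by the same natural map rather than merely by an abstract homotopy-equivalence count. Establishing the auxiliary decomposition $\mathbf{\Sigma}(A \sqcup B) \simeq \mathbf{\Sigma}(A_+) \vee \mathbf{\Sigma}(B)$ and propagating it through the induction is the one new ingredient beyond what is already in place in the excerpt.
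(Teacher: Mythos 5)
Your route is genuinely different from the paper's. The paper does not induct on $n$: it decomposes $X_1\times\cdots\times X_m$ into all $2^n$ connected components $x^{[n]\backslash I}\times Y^I\times X^{[n+1,m]}$ at once, applies Theorem~\ref{Thm:Stable-Decomp-Lemma} to each connected block $Y^I\times X^{[n+1,m]}$, and then matches the result against the target by writing each $\widehat{X}^H$ as $\widehat{X}^I\wedge\widehat{X}^J$ with $I\subseteq[1,n]$, $J\subseteq[n+1,m]$, treating $J\neq\varnothing$ (iterate Lemma~\ref{Lem:Smash-Decomp}(a)) and $J=\varnothing$ (Lemma~\ref{Lem:Smash-Decomp}(b)) separately. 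Your induction peels off one disconnected factor at a time instead; the auxiliary identity $\mathbf{\Sigma}(A\sqcup B)\simeq\mathbf{\Sigma}(A)\vee\mathbf{\Sigma}(B)\vee S^1$ is correct (and, importantly, remains valid when $A$ is itself disconnected, which you need since $Y_1\times P$ has $2^{n-1}$ components). I have checked that your wedge-summand bookkeeping closes the induction; both proofs are in the end summand-matching arguments, and yours hides the exponential component count inside the recursion, which is arguably tidier. Like the paper, you still owe the reader the verification that the matching is realized by a single natural map compatible with colimits, but you are no worse off there than the source.

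The one step that does not go through as written is the target-side appeal to Lemma~\ref{Lem:Smash-Decomp}(a) ``with $X_1$ and the connected smash $\widehat{X}^{I\setminus\{1\}}$'': that smash is \emph{not} connected whenever $\varnothing\neq I\setminus\{1\}\subseteq\{2,\ldots,n\}$, i.e.\ when every remaining index in $I$ is of two-component type. In that case $\widehat{X}^{I\setminus\{1\}}$ is itself a disjoint union of a connected complex $W$ with a separate base-point $z_0$, so the hypothesis of Lemma~\ref{Lem:Smash-Decomp}(a) fails; this is precisely the case the paper isolates and routes through Lemma~\ref{Lem:Smash-Decomp}(b). The identity you need, $\mathbf{\Sigma}(X_1\wedge Z)\simeq\mathbf{\Sigma}(Z)\vee\mathbf{\Sigma}(Y_1\wedge Z)$, does remain true for $Z=W\sqcup\{z_0\}$: a direct computation with the definition of the smash product gives $X_1\wedge Z\cong (Y_1\times W)\sqcup\{\mathrm{pt}\}$ and $Y_1\wedge Z\cong Y_1\times W\slash(\{y_0\}\times W)$, after which Theorem~\ref{Thm:Stable-Decomp-Lemma} and Lemma~\ref{Lem:Suspen-Union} identify both sides with $\mathbf{\Sigma}(Y_1)\vee\mathbf{\Sigma}(W)\vee\mathbf{\Sigma}(Y_1\wedge W)\vee S^1$, arguing as in the proof of Lemma~\ref{Lem:Smash-Decomp}(a). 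But this is an extra case you must prove rather than cite; insert it (or split the wedge over $I\ni 1$ according to whether $I\setminus\{1\}$ meets $[n+1,m]$) and your argument is complete.
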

 \begin{proof}
   For brevity, let $[n_1, n_2]=\{ n_1,\cdots,n_2 \}$
  for any integer $n_1\leq n_2$. Let
   $$ x^I = \{x_{i_1}\}\times \cdots \times \{x_{i_k}\},\ 
   Y^I=Y_{i_1}\times \cdots \times Y_{i_k},\ I=\{i_1,\cdots, i_k\},\, i_1 <\cdots < i_k. $$
   
  There are $2^n$
  connected components in $X^{[m]}=X_1\times\cdots\times X_m$, which are
  $$  \{ x^{[n]\backslash I}\times Y^I \times X^{[n+1,m]} \,|\, I\subseteq [1,n]\}. $$
  We choose a basepoint for each $Y_i$, $1\leq i \leq n$. So by Lemma~\ref{Lem:Suspen-Union}, we have
  \begin{align*}
   \mathbf{\Sigma} ( X_1\times\cdots\times X_m )
   \simeq  \bigvee_{I\subseteq [1,n]}\mathbf{\Sigma}\big(Y^I \times X^{[n+1,m]}\big)  \vee \bigvee_{2^{n}-1} S^1.
  \end{align*}
 
 Since $Y_1,\cdots, Y_n, X_{n+1},\cdots, X_m$ are all connected based CW-complexes, we can apply Theorem~\ref{Thm:Stable-Decomp-Lemma} to each
 $Y^I \times X^{[n+1,m]}$ and
  obtain
  \begin{align} \label{Equ:Product-Decomp-Wedge}
  \mathbf{\Sigma} ( X_1\times\cdots\times X_m )
   \simeq \bigvee_{I\subseteq [1,n]}  
   \underset{J\subseteq [n+1,m]}{\bigvee_{L\cup J\neq\varnothing, L\subseteq I}} 
    \mathbf{\Sigma}(\widehat{Y}^L \wedge \widehat{X}^{J})  \vee \bigvee_{2^{n}-1} S^1.
  \end{align}
 
  On the other hand, for any $I=\{i_1,\cdots, i_k\}\subseteq [1,n]$ and $J\subseteq [n+1,m]$,
   $$\quad \widehat{X}^I\wedge \widehat{X}^{J} = X_{i_1}\wedge\cdots\wedge X_{i_k}\wedge \widehat{X}^{J} =  ( Y_{i_1}\cup \{x_{i_1}\} )\wedge\cdots\wedge (Y_{i_k}\cup \{x_{i_k}\})\wedge \widehat{X}^{J}. $$
   \begin{itemize}
    \item If $J\neq \varnothing$, $\widehat{X}^I\wedge \widehat{X}^{J}$ is a connected CW-complex. Then by iteratively using Lemma~\ref{Lem:Smash-Decomp}(a), we obtain
  \begin{align*}
      \mathbf{\Sigma}(\widehat{X}^I\wedge \widehat{X}^{J}) \simeq \bigvee_{L\subseteq I}   \mathbf{\Sigma}( \widehat{Y}^L\wedge\widehat{X}^{J}).
  \end{align*}
  \item If $J=\varnothing$ and $I\neq \varnothing$, by iteratively using Lemma~\ref{Lem:Smash-Decomp}(b), we can deduce that
   $\widehat{X}^I$ is the disjoint union of 
  $\widehat{Y}^I$ and a point represented by $x^I$.
  So by Lemma~\ref{Lem:Suspen-Union}, 
  $ \mathbf{\Sigma}(\widehat{X}^I ) \simeq 
     \mathbf{\Sigma}(\widehat{Y}^I) \vee S^1$.
  \end{itemize}
  
   So we have
   \begin{align*}
  \quad \underset{H\neq\varnothing}{\bigvee_{H \subseteq [m]}} \mathbf{\Sigma} (\widehat{X}^H) &=  \underset{J \subseteq [n+1,m]}{\bigvee_{I\cup J\neq\varnothing, I\subseteq [1,n]}}
    \mathbf{\Sigma} (\widehat{X}^I\wedge \widehat{X}^J )
    \\
    &= \Big( \underset{\varnothing\neq J \subseteq [n+1,m]}{\bigvee_{I\subseteq [1,n]}}
    \mathbf{\Sigma} (\widehat{X}^I\wedge \widehat{X}^J )
    \Big)
    \vee \Big( \bigvee_{\varnothing\neq I\subseteq [1,n]}
    \mathbf{\Sigma} (\widehat{X}^I) \Big)\\
    &\simeq  \Big( \bigvee_{I\subseteq [1,n]} \underset{\varnothing\neq J \subseteq [n+1,m]}{\bigvee_{L\subseteq I}} \mathbf{\Sigma}(\widehat{Y}^L \wedge \widehat{X}^{J}) \Big) 
    \vee \Big( \bigvee_{\varnothing\neq I\subseteq [1,n]}
    \big( \mathbf{\Sigma} (\widehat{Y}^I) \vee S^1 \big) \Big).
   \end{align*}
   
  By comparing the above expression with~\eqref{Equ:Product-Decomp-Wedge},
  we prove the theorem. 
   \end{proof}
 
 \begin{rem}
   By Theorem~\ref{Thm:Generalized-Stable-Decomp}, it is not hard to see that all the main theorems in~\cite{BBCG10} also hold for based CW-complex pairs $\{(X_i,A_i,a_i)\}^m_{i=1}$ where each of $X_i$ and $A_i$ is either connected or is a disjoint union of a connected CW-complex with its basepoint. In particular, \cite[Corollary 2.24]{BBCG10} also holds for $(D^1, S^0)$.
 \end{rem}

\begin{rem}
  It is possible to extend Theorem~\ref{Thm:Generalized-Stable-Decomp} further to 
  deal with spaces each of which is a disjoint union of a connected CW-complex with finitely many points. But since
  Theorem~\ref{Thm:Generalized-Stable-Decomp} is already
  enough for our discussion in this paper, we leave the
  more generalized statement to the reader.
  \end{rem}

 \begin{defi} \label{Def:rtimes-ltimes}
   For any based CW-complexes $(X,x_0)$ and $(Y, y_0)$, let
    $$  X\rtimes Y := X\times Y \big\slash x_0 \times Y,\ \ X\ltimes Y := X\times Y \big\slash X\times y_0. $$
    
   If each of $X$ and $Y$ is either connected or is a
     disjoint union of a connected CW-complex with its basepoint, there is a homotopy equivalence by Theorem~\ref{Thm:Generalized-Stable-Decomp} 
    \begin{align} 
      \mathbf{\Sigma}(X\rtimes Y) &\simeq  \mathbf{\Sigma}( X\times Y) \big\slash  \mathbf{\Sigma}(x_0 \times Y)\simeq \mathbf{\Sigma}(X)\vee \mathbf{\Sigma}(X\wedge Y) \label{Equ:rtimes-homotopy} \\
      \mathbf{\Sigma}(X\ltimes Y) &\simeq  \mathbf{\Sigma}( X\times Y) \big\slash  \mathbf{\Sigma}(X \times y_0)\simeq \mathbf{\Sigma}(Y)\vee \mathbf{\Sigma}(X\wedge Y).  \label{Equ:ltimes-homotopy}
      \end{align}   
   \end{defi}

 We can further generalize Theorem~\ref{Thm:Stable-Decomp-Lemma} to the following form. We will use the following convention in the
   rest of the paper:\\
     
  \noindent\textbf{Convention:} 
  For any based space $Y$, define $Y\wedge \widehat{X}^I := Y$ when $I=\varnothing$.

 \begin{thm} \label{Thm:Stabel-Decomp-rtimes}
     Let $(X_i, x_i)$, $1\leq i\leq m$ and $(B,b_0)$ be a collection of based CW-complexes where each of $X_i$ and $B$ is either connected or is a
     disjoint union of a connected CW-complex with its basepoint.
    Then 
    there is a based, natural homotopy equivalence which commutes with colimits:
  $$ h:  \mathbf{\Sigma} \big(B\rtimes (X_1\times \cdots\times X_m) \big) \rightarrow \mathbf{\Sigma}\Big(\bigvee_{I \subseteq [m]} B\wedge \widehat{X}^I \Big). $$
 \end{thm}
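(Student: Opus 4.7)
The plan is to reduce Theorem~\ref{Thm:Stabel-Decomp-rtimes} to Theorem~\ref{Thm:Generalized-Stable-Decomp} by treating $B$ as one additional factor. Writing $X_0 := B$ and applying Theorem~\ref{Thm:Generalized-Stable-Decomp} to the $(m+1)$-fold product $X_0 \times X_1 \times \cdots \times X_m$, we obtain a based, natural homotopy equivalence commuting with colimits
$$\widetilde{h}: \mathbf{\Sigma}(B \times X_1 \times \cdots \times X_m) \xrightarrow{\,\simeq\,} \mathbf{\Sigma}\Big(\bigvee_{\varnothing\neq I \subseteq \{0,1,\ldots,m\}} \widehat{X}^I\Big).$$
The strategy is then to decompose the right-hand wedge according to whether $0 \in I$ or not, and to collapse the piece corresponding to $0 \notin I$ by quotienting by $\{b_0\} \times X_1 \times \cdots \times X_m$.

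First I would split the wedge on the right as $\bigvee_{0\in I}\widehat{X}^I \,\vee\, \bigvee_{\varnothing\neq J\subseteq[m]}\widehat{X}^J$. Under the convention $B \wedge \widehat{X}^\varnothing := B$, the $I\ni 0$ summands (writing $I = \{0\}\cup J$) reassemble as
$$\bigvee_{J\subseteq [m]} B\wedge \widehat{X}^J,$$
which is precisely the target wedge in the statement. Next I would use the standard identity $\mathbf{\Sigma}(X/A) \simeq \mathbf{\Sigma}(X)/\mathbf{\Sigma}(A)$ for the CW-pair $(B\times X_1\times\cdots\times X_m,\ \{b_0\}\times X_1\times\cdots\times X_m)$ to rewrite
$$\mathbf{\Sigma}\bigl(B \rtimes (X_1\times\cdots\times X_m)\bigr) \simeq \mathbf{\Sigma}(B\times X_1\times\cdots\times X_m)\big/ \mathbf{\Sigma}(\{b_0\}\times X_1\times\cdots\times X_m).$$

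The key input is the naturality of $\widetilde{h}$ applied to the based inclusion $\iota:\{b_0\}\times X_1\times\cdots\times X_m \hookrightarrow B\times X_1\times\cdots\times X_m$. Since $\iota$ sends the $X_0$-coordinate to the base-point $b_0$, any smash summand $\widehat{X}^I$ with $0\in I$ is crushed to a point, so up to homotopy $\widetilde{h}\circ\mathbf{\Sigma}(\iota)$ lands in the wedge summands indexed by $I\subseteq[m]$ with $I\neq\varnothing$. Hence the quotient by $\mathbf{\Sigma}(\{b_0\}\times X_1\times\cdots\times X_m)$ is (up to homotopy) the collapse of exactly those summands, leaving
$$\mathbf{\Sigma}\Big(\bigvee_{0\in I \subseteq \{0,\ldots,m\}} \widehat{X}^I\Big) \simeq \mathbf{\Sigma}\Big(\bigvee_{J\subseteq [m]} B\wedge \widehat{X}^J\Big),$$
and composing with the identifications above defines the desired $h$. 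Both naturality and compatibility with colimits transfer from $\widetilde{h}$, since the quotient is taken by a subspace whose inclusion is natural in $B$ and in each $X_i$, and colimits in $CW$ commute with taking quotients by such naturally-defined subspaces.

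The main obstacle is the identification of $\widetilde{h}\circ\mathbf{\Sigma}(\iota)$ with the wedge-inclusion of the $0\notin I$ summands. Although intuitively clear because smashing with the base-point collapses any factor of $B$, a clean argument needs the precise naturality clause in Theorem~\ref{Thm:Generalized-Stable-Decomp}: applied to the based inclusion $\iota$, it forces the homotopy equivalence for the subspace $\{b_0\}\times X_1\times\cdots\times X_m$ to be the restriction of $\widetilde{h}$ and to hit exactly the summands without $X_0$. Once that is established, the remaining verifications (the quotient computation, handling of potentially disconnected $B$ or $X_i$ via Theorem~\ref{Thm:Generalized-Stable-Decomp}, and checking that $h$ is based) are formal.
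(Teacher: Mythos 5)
Your proposal is correct and follows essentially the same route as the paper: both apply Theorem~\ref{Thm:Generalized-Stable-Decomp} to the $(m+1)$-fold product $B\times X_1\times\cdots\times X_m$ and then identify which wedge summands survive the passage to the half-smash. The only cosmetic difference is that you collapse $\{b_0\}\times X_1\times\cdots\times X_m$ in a single quotient, whereas the paper first forms the full smash $B\wedge(X_1\times\cdots\times X_m)$ by collapsing $\mathbf{\Sigma}(B)\vee\mathbf{\Sigma}(X_1\times\cdots\times X_m)$ and then wedges $\mathbf{\Sigma}(B)$ back on via the identity \eqref{Equ:rtimes-homotopy}.
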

 \noindent \textit{Proof.}
  By definition, we have
  \begin{align*}
   \mathbf{\Sigma}\big(B\wedge (X_1\times\cdots\times X_m)\big) &=  \mathbf{\Sigma}\big( B\times (X_1\times\cdots\times X_m) \big\slash
    B\vee (X_1\times\cdots\times X_m) \big) \\
    &\simeq  \mathbf{\Sigma}\big( B\times X_1\times\cdots\times X_m \big) \big \slash
    \mathbf{\Sigma}(B) \vee   \mathbf{\Sigma}(X_1\times\cdots\times X_m)\\
  \text{(by Lemma~\ref{Thm:Generalized-Stable-Decomp})} \, &\simeq \bigvee_{\varnothing\neq I \subseteq [m]} \mathbf{\Sigma}\big( B\wedge \widehat{X}^I \big).
  \end{align*}
  Then by~\eqref{Equ:rtimes-homotopy}, we have
  \begin{align*}
  \mathbf{\Sigma} \big(B\rtimes (X_1\times \cdots\times X_m) \big) &= \mathbf{\Sigma}(B) \vee \mathbf{\Sigma}(B\wedge (X_1\times\cdots\times X_m)) \\
   &\simeq \mathbf{\Sigma}(B) \vee \bigvee_{\varnothing\neq I \subseteq [m]} \mathbf{\Sigma}\big( B\wedge \widehat{X}^I \big) \\
   &\simeq \mathbf{\Sigma}\Big(\bigvee_{I \subseteq [m]} B\wedge \widehat{X}^I \Big). \qquad\qquad \qquad\qquad \qed
    \end{align*}

 To apply the above stable decomposition lemmas to $(D^2,S^1)^{Q}$, we need to choose a basepoint for each
 $(D^2,S^1)^{(f, L)}$ in the first place. But by Remark~\ref{Rem:Base-Points}, there is no good way to choose a basepoint inside each
  $(D^2,S^1)^{(f,L)}$ to adapt to the colimit construction
  of $(D^2,S^1)^{Q}$.  So in the following, we add an auxiliary point to all $(D^2,S^1)^{(f,L)}$ as their common basepoint.
 
 \begin{itemize}
  \item Let $1_{(j)}$ be the basepoint of $S^1_{(j)}$ and 
 $D^2_{(j)}$ for every $j\in [m]$. 
 
 \item Let $Q_+=Q\cup q_0$ where $q_0 \notin Q$ is the basepoint of $Q_+$.
 
 \item For any face $f$ of $Q$, let $f_+ = f\cup q_0$ with basepoint $q_0$.
 
 \item For any $(f,L)\in \mathcal{P}_{Q}$, define 
   \[
   \qquad   (D^2,S^1)^{(f,L)}_+ :=   (D^2,S^1)^{(f,L)} \cup \widehat{q}_0, \ \text{where}\ 
   \widehat{q}_0 = q_0\times \prod_{j\in [m]} 1_{(j)}\ \,\text{is the basepoint}.
 \]
   
 \item Let
 $(D^2,S^1)^{Q}_+ = (D^2,S^1)^{Q}
 \cup \widehat{q}_0$ with basepoint $\widehat{q}_0$.
  \end{itemize}

 Let $\mathbf{D}_+: \mathcal{P}_{Q}\rightarrow CW_*$ be a diagram of based CW-complexes where  
  $$\mathbf{D}_+((f,L)) = (D^2,S^1)^{(f,L)}_+, \ \forall
  (f,L)\in \mathcal{P}_{Q}, $$
 and $(d_+)_{(f,L),(f',L')}:
  \mathbf{D}_+((f',L')) \rightarrow \mathbf{D}_+((f,L))$ is the natural inclusion for any $(f,L)\leq (f',L')\in \mathcal{P}_{Q}$. Then it is clear that
      \begin{equation} \label{Equ:Extra}
   (D^2,S^1)^{Q}_+ = (D^2,S^1)^{Q}
 \cup \widehat{q}_0 =  \mathrm{colim}(\mathbf{D})
   \cup \widehat{q}_0 = \mathrm{colim}(\mathbf{D}_+). 
   \end{equation}

 Next, we analyze the reduced suspension $\mathbf{\Sigma}\big(\mathrm{colim}(\mathbf{D}_+)\big)$ from the colimit point of view. 
  Since 
  all the $(D^2,S^1)^{(f,L)}_+$ share the same basepoint $\widehat{q}_0$, we have
  $$ \mathbf{\Sigma}(\mathrm{colim}(\mathbf{D}_+))=
   \mathbf{\Sigma}\Big(\bigcup_{(f,L)\in\mathcal{P}_{Q}} (D^2,S^1)^{(f,L)}_+ \Big) = \bigcup_{(f,L)\in\mathcal{P}_{Q}}   \mathbf{\Sigma}\big((D^2,S^1)^{(f,L)}_+\big) $$
     
   \begin{lem} \label{Lem:Stable-Decomp-plus}
   For any $(f,L)\in \mathcal{P}_{Q}$, there is a natural homeomorphism which commutes with taking the colimit:
    $$ (D^2,S^1)^{(f,L)}_+ \cong 
   f_+ \rtimes \Big(  \prod_{j\in I_f\backslash L} D^2_{(j)} 
   \times \prod_{j\in [m]\backslash (I_f\backslash L)} S^1_{(j)} \Big) . $$
   \end{lem}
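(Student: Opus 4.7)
The plan is to unwind both sides of the claimed homeomorphism and exhibit the correspondence directly. Write $Y_{(f,L)} := \prod_{j\in I_f\backslash L} D^2_{(j)} \times \prod_{j\in [m]\backslash (I_f\backslash L)} S^1_{(j)}$, so that by \eqref{Equ:D2S1-Sigma-I-L} we have $(D^2,S^1)^{(f,L)} = f \times Y_{(f,L)}$, and hence $(D^2,S^1)^{(f,L)}_+$ is just $(f \times Y_{(f,L)})$ with the single extra point $\widehat{q}_0$ attached as base-point, i.e.\ a topologically disjoint union of a product with a point.

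On the right-hand side, because $q_0\notin f\subseteq Q$, the space $f_+ = f\sqcup\{q_0\}$ is a literal disjoint union, so $f_+\times Y_{(f,L)} = (f\times Y_{(f,L)})\sqcup(\{q_0\}\times Y_{(f,L)})$. The half-smash $f_+\rtimes Y_{(f,L)}$ then collapses $\{q_0\}\times Y_{(f,L)}$ to a single point $*$, yielding $(f\times Y_{(f,L)})\sqcup\{*\}$. The natural candidate homeomorphism sends $f\times Y_{(f,L)}$ identically to itself and $*$ to $\widehat{q}_0$; it is a continuous bijection between spaces of exactly the same combinatorial shape, hence a homeomorphism.

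For naturality (and hence commutation with the colimit), suppose $(f,L)\leq (f',L')$ in $\mathcal{P}_Q$, so that $f'\subseteq f$ and $I_{f'}\backslash L'\subseteq I_f\backslash L$. Then one has inclusions $f'_+\hookrightarrow f_+$ and $Y_{(f',L')}\hookrightarrow Y_{(f,L)}$ (using $S^1_{(j)}\subseteq D^2_{(j)}$ on the coordinates $j\in (I_f\backslash L)\backslash(I_{f'}\backslash L')$), which descend to an inclusion $f'_+\rtimes Y_{(f',L')}\hookrightarrow f_+\rtimes Y_{(f,L)}$ sending one collapsed base-point to the other. Under the homeomorphism above this matches the defining inclusion $(D^2,S^1)^{(f',L')}_+\hookrightarrow (D^2,S^1)^{(f,L)}_+$ (which fixes $\widehat{q}_0$). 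Thus the pointwise homeomorphisms assemble into a natural isomorphism of diagrams on $\mathcal{P}_Q$, so they induce a homeomorphism of colimits as required. There is essentially no substantive obstacle; the only point demanding care is that $q_0$ is a genuinely extra point, which makes $f_+$ a disjoint union and reduces the half-smash quotient to the trivial one-point extension.
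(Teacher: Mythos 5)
Your argument is correct and is essentially the paper's own proof: both unwind the half-smash $f_+\rtimes Y_{(f,L)}$ as the quotient of a disjoint union $(f\times Y_{(f,L)})\sqcup(\{q_0\}\times Y_{(f,L)})$ collapsing the second (clopen) piece to a point, identified with $\widehat{q}_0$. The only cosmetic difference is that the paper obtains compatibility with the colimit by observing all these homeomorphisms are restrictions of one global homeomorphism $Q_+\rtimes\prod_{j\in[m]}D^2_{(j)}\to\big(Q\times\prod_{j\in[m]}D^2_{(j)}\big)\cup\widehat{q}_0$, whereas you verify the naturality squares directly.
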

   \begin{proof}
    By our definitions, 
     \begin{align*}
    &\quad \  f_+ 
       \rtimes \prod_{j\in I_f\backslash L} D^2_{(j)} 
   \times \prod_{j\in [m]\backslash (I_f\backslash L)} S^1_{(j)} \\
   &= \big( f \cup q_0  \big)
       \times \prod_{j\in I_f\backslash L} D^2_{(j)} 
   \times \prod_{j\in [m]\backslash (I_f\backslash L)} S^1_{(j)} \bigg\slash q_0\times \prod_{j\in I_f\backslash L} D^2_{(j)} 
   \times \prod_{j\in [m]\backslash (I_f\backslash L)} S^1_{(j)} \\
   &\cong \Big( f 
       \times \prod_{j\in I_f\backslash L} D^2_{(j)} 
   \times \prod_{j\in [m]\backslash (I_f\backslash L)} S^1_{(j)} \Big) \cup \widehat{q}_0 = (D^2,S^1)^{(f,L)}_+.
   \end{align*}
   
  The above homeomorphism ``$\cong$'' is induced by the global homeomorphism
   $$ Q_+\rtimes \prod_{j\in [m]} D^2_{(j)} \rightarrow \Big( Q\times \prod_{j\in [m]} D^2_{(j)}\Big) 
    \cup \widehat{q}_0   $$
    which identifies $q_0\times \prod_{j\in [m]} D^2_{(j)} \Big\slash q_0\times \prod_{j\in [m]} D^2_{(j)}$ with
    $\widehat{q}_0$. The lemma follows.   
 \end{proof}

  Since we assume that each face $f$ of $Q$ is a CW-complex in our convention, we can deduce from Theorem~\ref{Thm:Stabel-Decomp-rtimes} and Lemma~\ref{Lem:Stable-Decomp-plus} that 
  \begin{align}\label{Equ:term-Essential}
   \mathbf{\Sigma} \Big(  (D^2,S^1)^{(f,L)}_+ \Big) 
 &\cong \mathbf{\Sigma}\Big( f_+ 
       \rtimes \Big(\prod_{j\in I_f\backslash L} D^2_{(j)} 
   \times \prod_{j\in [m]\backslash (I_f\backslash L)} S^1_{(j)}\Big)  \Big) \\
    & \simeq \bigvee_{J \subseteq [m]} \mathbf{\Sigma}\Big(  f_+
       \wedge \bigwedge_{j\in J\cap (I_f\backslash L)} D^2_{(j)} 
   \wedge \bigwedge_{j\in  J\backslash (I_f\backslash L)} S^1_{(j)} \Big). \nonumber
\end{align}

 According to~\eqref{Equ:term-Essential}, we define a family of diagrams of based CW-complexes 
 $$ \widehat{\mathbf{D}}^J_+:  \mathcal{P}_{Q}\rightarrow CW_*, \ J \subseteq [m]$$
   \begin{equation} \label{Equ:E-hat-J-Sigma-I-L}
   \widehat{\mathbf{D}}^J_+((f,L)) := f_+
       \wedge \bigwedge_{j\in J\cap (I_f\backslash L)} D^2_{(j)} 
   \wedge \bigwedge_{j\in  J\backslash (I_f\backslash L)} S^1_{(j)} \, \subseteq Q_+\wedge \bigwedge_{j\in J} D^2_{(j)}.
    \end{equation}
where $(\widehat{d}^J_+)_{(f,L),(f',L')}: 
 \widehat{\mathbf{D}}^J_+((f',L'))\rightarrow  \widehat{\mathbf{D}}^J_+((f,L))$
 is the natural inclusion
  for any $(f,L) \leq (f',L')\in \mathcal{P}_{Q}$. The basepoint of 
 $ \widehat{\mathbf{D}}^J_+((f,L)) $ is
 $$[\widehat{q}^J_0] := \Big[ q_0\times \prod_{j\in J} 1_{(j)}\Big].$$

Since here the reduced suspension commutes with colimits up to homotopy equivalence (see~\cite[Theorem 4.3]{BBCG10}), we obtain a homotopy equivalence
\begin{equation} \label{Equ:Homotopy-Colimits-1}
    \mathbf{\Sigma} \big( \mathrm{colim}(\mathbf{D}_+) \big)
\simeq \mathrm{colim}(\mathbf{\Sigma}(\mathbf{D}_+)) \simeq 
  \bigvee_{J\subseteq [m]} \mathbf{\Sigma}\big( \mathrm{colim}(\widehat{\mathbf{D}}^J_+)\big).
\end{equation}

The following theorem from~\cite{BBCG10} will be useful in our proof of
 Theorem~\ref{Thm:Stable-Decomp-Main}.
 It is a modification of the ``Homotopy Lemma'' given in~\cite{Vogt73, ZiegZiv93, WelkZiegZiv99}.

\begin{thm}[Corollary 4.5 in~\cite{BBCG10}] \label{Thm:Homotopy-Colimit-Lemma}
 Let $D$ and $E$ be two diagrams over a finite poset $\mathcal{P}$ with values in $CW_*$ for which the maps
 $\mathrm{colim}_{q > p} D(q)\hookrightarrow D(p)$, and
 $\mathrm{colim}_{q > p} E(q)\hookrightarrow E(p)$ are all 
 closed cofibrations. If $f$ is a map of diagrams over $\mathcal{P}$ such that for every $p\in\mathcal{P}$,
 $f_p: D(p)\rightarrow E(p)$ is a homotopy equivalence,
 then $f$ induces a homotopy equivalence 
 $f: \mathrm{colim}(D(P)) \rightarrow \mathrm{colim}(E(P))$.
\end{thm}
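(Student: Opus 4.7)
The plan is to argue by induction on the cardinality $N = |\mathcal{P}|$. The base case $N = 1$ is immediate: $\mathrm{colim}(D) = D(p)$, $\mathrm{colim}(E) = E(p)$, and $f_p$ is a homotopy equivalence by hypothesis. For the inductive step, I would pick a minimal element $p \in \mathcal{P}$ and set $\mathcal{P}' = \mathcal{P} \setminus \{p\}$ and $\mathcal{P}_{>p} = \{q \in \mathcal{P} : q > p\} \subseteq \mathcal{P}'$. The heart of the argument is establishing the pushout identification
$$\mathrm{colim}_{\mathcal{P}}(D) \;\cong\; \mathrm{colim}_{\mathcal{P}'}(D) \cup_{\,\mathrm{colim}_{\mathcal{P}_{>p}}(D)\,} D(p),$$
where the right-hand map is induced by the transition maps $d_{p,q} : D(q) \to D(p)$ for $q > p$, and the left-hand map is the natural inclusion of colimits coming from $\mathcal{P}_{>p} \subseteq \mathcal{P}'$. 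Minimality of $p$ is essential here: no generating relation $x \sim d_{p',p''}(x)$ in the quotient definition of the colimit involves $D(p)$ unless $p' = p$ and $p'' > p$, so $D(p)$ is glued to the remaining pieces exactly along $\mathrm{colim}_{\mathcal{P}_{>p}}(D)$. The same decomposition holds for $E$, and the cofibration hypothesis makes both pushouts into honest homotopy pushouts.

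With the pushout decomposition in hand, I would invoke the classical gluing lemma for pushouts along closed cofibrations: given a commutative ladder of two pushout squares in which all three component maps on the inputs are homotopy equivalences and both pushouts are formed along closed cofibrations, the induced map on pushouts is a homotopy equivalence. The three component maps in our setting are $f_p : D(p) \to E(p)$ (given), the colimit map on $\mathcal{P}_{>p}$, and the colimit map on $\mathcal{P}'$. The latter two are homotopy equivalences by the inductive hypothesis, which applies because $|\mathcal{P}_{>p}|, |\mathcal{P}'| < N$. To invoke the induction legitimately I must check that the cofibration hypothesis descends to these sub-posets; this is a short verification, since for any $q \in \mathcal{P}'$ the set $\{q' > q\}$ cannot contain the minimal element $p$, and for any $q \in \mathcal{P}_{>p}$ every $q' > q$ automatically lies in $\mathcal{P}_{>p}$. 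Thus the upward sets (and the corresponding colimits and cofibrations) are unchanged when restricted to the sub-poset, and the induction closes.

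The main technical obstacle is justifying the pushout decomposition cleanly from the explicit quotient definition of $\mathrm{colim}$ given earlier in the section. One must verify that no hidden identifications between $D(p)$ and the rest of $\coprod D(q)$ occur beyond those factoring through $\mathcal{P}_{>p}$, which requires tracing through chains of the generating relations $x \sim d_{p',p''}(x)$. Minimality of $p$ rules these out: any chain of relations connecting a point of $D(p)$ to a point of some $D(q)$ with $q \neq p$ must begin with a relation of the form $x \sim d_{p,q_1}(x)$ for some $q_1 > p$, so the identification factors through the sub-colimit over $\mathcal{P}_{>p}$. Once this identification is justified, the remainder of the proof is formal and the gluing lemma closes the induction.
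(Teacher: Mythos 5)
Your argument is correct, but note that the paper does not prove this statement at all: it is imported verbatim as Corollary~4.5 of the Bahri--Bendersky--Cohen--Gitler paper, which in turn derives it from the ``Homotopy Lemma'' of Vogt and Ziegler--\v{Z}ivaljevi\'c. The proofs in those references typically pass through homotopy colimits (the projection $\mathrm{hocolim}\to\mathrm{colim}$ is an equivalence under the cofibrancy hypothesis, and $\mathrm{hocolim}$ is homotopy invariant), whereas you give the direct elementary induction on $|\mathcal{P}|$: peel off a minimal element $p$, identify $\mathrm{colim}_{\mathcal{P}}(D)$ as the pushout of $\mathrm{colim}_{\mathcal{P}'}(D)\leftarrow \mathrm{colim}_{\mathcal{P}_{>p}}(D)\rightarrow D(p)$, and apply the gluing lemma along the closed cofibration $\mathrm{colim}_{q>p}D(q)\hookrightarrow D(p)$. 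All the points that need checking are in place: the pushout identification is the standard ``quotient in stages'' argument and uses minimality of $p$ exactly as you say; the leg that must be a cofibration is precisely the one the hypothesis supplies (the other leg need not be); and you correctly verify that the upward sets, hence the cofibration hypotheses, are unchanged on the sub-posets $\mathcal{P}'$ and $\mathcal{P}_{>p}$, so the induction closes. The only cosmetic point is the based versus unbased colimit: the paper's diagrams live in $CW_*$ while its colimit is defined via $\coprod$; your argument works verbatim in either setting (for the based version replace $\coprod$ by the wedge and use that CW-complexes are well-pointed). Your route is more self-contained than the cited one, at the cost of generality (the hocolim argument also yields statements where the pointwise maps are only weak equivalences).
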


 Now we are ready to give a proof of Theorem~\ref{Thm:Stable-Decomp-Main}.

\begin{proof}[\textbf{Proof of Theorem~\ref{Thm:Stable-Decomp-Main}.}]
   By~\eqref{Equ:Extra} and~\eqref{Equ:Homotopy-Colimits-1}, we obtain a homotopy equivalence
\begin{equation} \label{Equ:Iso-Wedge-Decomp}
 \mathbf{\Sigma}\big( (D^2,S^1)^{Q}_+ \big)   \simeq \bigvee_{J\subseteq [m]} \mathbf{\Sigma}\big( \mathrm{colim}(\widehat{\mathbf{D}}^J_+)\big)  
  \end{equation}

Notice that when $J\cap (I_f\backslash L) \neq \varnothing$,
 $\widehat{\mathbf{D}}^J_+((f,L))$ is contractible by Lemma~\ref{Lem:Smash-Contract}.
 So for any $J\subseteq [m]$, we define another diagram of based CW-complexes 
$$\widehat{\mathbf{E}}^J_+:  \mathcal{P}_{Q}\rightarrow CW_*$$ 
   \begin{equation}\label{Equ:F-Hat-J}
       \widehat{\mathbf{E}}^J_+((f,L)) :=
        \begin{cases}
  \widehat{\mathbf{D}}^J_+((f,L)) = f_+
       \wedge \underset{j\in  J}{\scalebox{1.3}{$\bigwedge$}} S^1_{(j)},  &  \text{if $J\cap (I_f\backslash L)=\varnothing$}; \\
  \quad   [\widehat{q}^J_0],  &  \text{if $J\cap (I_f\backslash L)\neq \varnothing$}.
 \end{cases} 
    \end{equation}
For $(f,L) \leq (f',L')\in \mathcal{P}_{Q}$, 
$(\widehat{e}^J_+)_{(f,L),(f',L')}: 
 \widehat{\mathbf{E}}^J_+((f',L'))\rightarrow  \widehat{\mathbf{E}}^J_+((f,L))$ is either the natural inclusion or the constant map $\mathbf{c}_{[\widehat{q}^J_0]}$ (mapping all points to $[\widehat{q}^J_0]$). The basepoint of $\widehat{\mathbf{E}}^J_+((f,L))$ is $[\widehat{q}^J_0]$.
 
  Moreover, let $\Phi^J: 
  \widehat{\mathbf{D}}^J_+\rightarrow \widehat{\mathbf{E}}^J_+$ be a map of diagrams over $\mathcal{P}_{Q}$ defined by:
   $$  \Phi^J_{(f,L)} : \widehat{\mathbf{D}}^J_+((f,L)) \rightarrow  \widehat{\mathbf{E}}^J_+((f,L)) $$
   $$ \Phi^J_{(f,L)} =  \begin{cases}
    id_{\widehat{\mathbf{D}}^J_+((f,L))} ,  &  
    \text{if $J\cap (I_f\backslash L)=\varnothing$}; \\
    \mathbf{c}_{[\widehat{q}^J_0]},  &  \text{if $J\cap (I_f\backslash L)\neq\varnothing$}.
 \end{cases}  $$

   Then by Theorem~\ref{Thm:Homotopy-Colimit-Lemma}, there exists a homotopy equivalence:
  $$\mathrm{colim}\big(\widehat{\mathbf{D}}^J_+\big) \simeq
    \mathrm{colim}\big(\widehat{\mathbf{E}}^J_+\big).$$
    
    Note that we always have 
 $\widehat{\mathbf{D}}^J_+((f,I_f))\subseteq \widehat{\mathbf{D}}^J_+((f,L))$ for any $L\subsetneq I_f \neq \varnothing$.
 So we can ignore the terms $\{ \widehat{\mathbf{D}}^J_+((f,I_f))\,|\, I_f\neq \varnothing, f\in \mathcal{S}_Q \}$ when computing
 $\mathrm{colim}\big(\widehat{\mathbf{D}}^J_+\big)$. 
 If $I_f =\varnothing$, then $f\subseteq Q^{\circ}$ and 
 $\widehat{\mathbf{D}}^J_+((f,L)) = \widehat{\mathbf{D}}^J_+((f,\varnothing)) = f_+
       \wedge \bigwedge_{j\in  J} S^1_{(j)} $.

 To understand $\mathrm{colim}\big(\widehat{\mathbf{E}}^J_+\big)$, we need to figure out in~\eqref{Equ:F-Hat-J} what are 
 those faces $f$ of $Q$ with some $L\subsetneq I_f$ such that $J\cap (I_f\backslash L) \neq \varnothing$.

  \begin{itemize}
   \item   
  There exists $L\subsetneq I_f$ with
 $J\cap (I_f\backslash L) \neq \varnothing$ if and only if $J\cap I_f\neq \varnothing$, which is equivalent to $f\subseteq F_J$.
  Conversely,  we have
$$ \qquad\ F_J =\bigcup_{f \in \mathcal{S}_Q} f\cap F_J =
\bigcup_{f \in \mathcal{S}_Q} \bigcup_{j\in J} f\cap F_j=
\bigcup_{f \in \mathcal{S}_Q} \bigcup_{j\in J\cap I_f} f\cap F_j \subseteq \bigcup_{f \in \mathcal{S}_Q} \bigcup_{J\cap I_f\neq \varnothing} f. $$ 
 This implies
 \begin{equation} \label{Equ:I-J-1}
  \bigcup_{f \in \mathcal{S}_Q} \underset{J\cap (I_f\backslash L)\neq \varnothing}{\bigcup_{\exists L\subsetneq I_f}} f = F_J.
  \end{equation} 
 
  \item There exists $L\subsetneq I_f$ with
 $J\cap (I_f\backslash L) = \varnothing$
 if and only if $f \subseteq F_{[m]\backslash J}$. So
 \begin{equation} \label{Equ:I-J-2}
   \bigcup_{f \in \mathcal{S}_Q} \underset{J\cap (I_f\backslash L)=\varnothing}{\bigcup_{\exists L\subsetneq I_f}} f  =  F_{[m]\backslash J}.
    \end{equation}
 \end{itemize}

 The above discussion implies:
 \begin{equation} \label{Equ:I-J-Com}
   \bigcup_{f \in \mathcal{S}_Q} \underset{J\cap (I_f\backslash L')\neq \varnothing}{\underset{J\cap (I_f\backslash L)= \varnothing}{\bigcup_{\exists L, L'\subsetneq I_f}}} f  =   F_{[m]\backslash J} \cap F_J.    
 \end{equation}
 
By the definition of $ \widehat{\mathbf{E}}^J_+$, if we have a face $f$ of $Q$ and two subsets $L,L'\subsetneq I_f$
such that $J\cap (I_f\backslash L)= \varnothing$ while $J\cap (I_f\backslash L') \neq \varnothing$,
then $J\cap (I_f\backslash (L\cup L')) = \varnothing$ and $J\cap (I_f\backslash (L\cap L')) \neq \varnothing$. So in this case, we have 
\begin{align*}
 &\bullet\, (\widehat{e}^J_+)_{(f,L'),(f,L\cup L')}:
 \widehat{\mathbf{E}}^J_+((f, L\cup L')) 
  \longrightarrow \widehat{\mathbf{E}}^J_+((f,L'))\ \text{is the constant map}\
   \mathbf{c}_{[\widehat{q}^J_0]}.\\
 &\qquad\qquad\qquad\qquad\quad \  \ f_+
       \wedge \bigwedge_{j\in  J} S^1_{(j)}\   
       \longrightarrow \ \,  [\widehat{q}^J_0] \\
 & \bullet\, (\widehat{e}^J_+)_{(f,L),(f,L\cup L')}:
 \widehat{\mathbf{E}}^J_+((f, L\cup L')) 
  \longrightarrow \widehat{\mathbf{E}}^J_+((f,L))\  \text{is identity map.}\\
 &\qquad\qquad\qquad\quad \qquad\   f_+
       \wedge \bigwedge_{j\in  J} S^1_{(j)}    \    \longrightarrow \  f_+
       \wedge \bigwedge_{j\in  J} S^1_{(j)}            
 \end{align*} 
Then in $\mathrm{colim}\big(\widehat{\mathbf{E}}^J_+\big)$,
the image of any of such $ f_+
       \wedge \bigwedge_{j\in  J} S^1_{(j)} $ is equivalent to the point
$[\widehat{q}^J_0]$. So we can deduce
\begin{itemize}
 \item For $J\neq \varnothing$,
 \begin{align*} 
 \qquad\ \ \mathrm{colim}\big(\widehat{\mathbf{E}}^J_+\big) \cong
   \Big( & Q^{\circ}\cup\underset{J\cap (I_f\backslash L)=\varnothing}{\bigcup_{\exists L\subsetneq I_f}} f_+ \Big) \wedge  \bigwedge_{j\in  J} S^1_{(j)} \bigg\slash 
    \Big( \underset{J\cap (I_f\backslash L')\neq \varnothing}{\underset{J\cap (I_f\backslash L)= \varnothing}{\bigcup_{\exists L, L'\subsetneq I_f}}} f_+ \Big) \wedge  \bigwedge_{j\in  J} S^1_{(j)}
  \\
   \overset{\eqref{Equ:I-J-2},\eqref{Equ:I-J-Com}}{\cong}& \Big( \big( (Q^{\circ} \cup F_{[m]\backslash J})\cup q_0 \big)  \Big\slash 
    \big( ( F_{[m]\backslash J} \cap F_J ) \cup q_0 \big)  \Big) \wedge  \bigwedge_{j\in  J} S^1_{(j)}  \\
   \cong  \  \big( & Q  \slash   F_J  \big)  \wedge  \bigwedge_{j\in  J} S^1_{(j)}  
   \cong \mathbf{\Sigma}^{|J|} ( Q\slash F_J ).
\end{align*}
\item For $J=\varnothing$,
$\mathrm{colim}\big(\widehat{\mathbf{E}}^{\varnothing}_+\big) \cong  Q^{\circ} \cup F_{[m]}\cup q_0 = Q\cup q_0=Q_+$.
\end{itemize}

 Combining all the above arguments, we obtain
 homotopy equivalences:
 \begin{align*}   
 &\quad\, \, \mathbf{\Sigma}\big( (D^2,S^1)^{Q}_+ \big)   \simeq  \bigvee_{J\subseteq [m]} \mathbf{\Sigma} \big( \mathrm{colim}\big(\widehat{\mathbf{D}}^J_+ \big) \big)
  \simeq 
  \bigvee_{J\subseteq [m]} \mathbf{\Sigma} \big( \mathrm{colim}\big(\widehat{\mathbf{E}}^J_+ \big) \big)\\
  &\simeq \mathbf{\Sigma}(Q_+) \vee \bigvee_{\varnothing \neq J\subseteq [m]}  \mathbf{\Sigma}^{|J|+1} ( Q\slash F_J ) \simeq  S^1\vee \mathbf{\Sigma}(Q) \vee \bigvee_{\varnothing\neq J\subseteq [m]}  \mathbf{\Sigma}^{|J|+1} ( Q\slash F_J ) \\
  &\simeq S^1\vee \bigvee_{J\subseteq [m]}  \mathbf{\Sigma}^{|J|+1} ( Q\slash F_J ).
   \end{align*}
 
   On the other hand, we have 
   $$  \mathbf{\Sigma}\big( (D^2,S^1)^{Q}_+ \big)  = 
   \mathbf{\Sigma}\big( (D^2,S^1)^{Q} \cup \widehat{q}_0 \big)
    \simeq  S^1 \vee
   \mathbf{\Sigma}\big( (D^2,S^1)^{Q} \big) \cong S^1 \vee \mathbf{\Sigma}( \mathcal{Z}_Q ) .$$
   
    Then the theorem follows.
\end{proof}

   \section{Cohomology ring structure of $\mathcal{Z}_Q$}
   \label{Sec:Cohomology}
   
   The cohomology ring of the moment-angle
complex over a simplicial complex $K$ was computed in Franz~\cite{Franz06} and  Baskakov-Buchstaber-Panov~\cite{BasBuchPanov04}.
The cohomology rings of a much wider class of spaces called
\emph{generalized moment-angle complexes} or \emph{polyhedral products} were computed in Bahri-Bendersky-Cohen-Gitler~\cite{BBCG12} via partial diagonal maps and in Bahri-Bendersky-Cohen-Gitler~\cite{BBCG17}
by a spectral sequence 
under certain freeness conditions (coefficients in a field for example).
The study in this direction is further extended in~\cite{BBCG20}.
A computation using different methods was carried out in Wang-Zheng~\cite{WangZheng15} and Zheng~\cite{Zheng16}.

 It was shown in Bahri-Bendersky-Cohen-Gitler~\cite{BBCG12} that the product structure on the cohomology of a polyhedral product over a simplicial complex can be formulated
 in terms of the stable decomposition and partial diagonal maps of the polyhedral product.
For a nice manifold with corners $Q$,
since we also have the stable decomposition of $\mathcal{Z}_Q$, we should be able to describe the cohomology ring of 
$\mathcal{Z}_Q$ in a similar way.

  Let us first recall the definition of partial diagonal in product spaces from~\cite{BBCG12}.
  Let $X_1,\cdots, X_m$ be a collection of based CW-complexes.
  Using the notations in Section~\ref{Subsec:Stable-Decomp}, for any $I\subseteq [m]$, there are natural projections $X^{[m]}\rightarrow \widehat{X}^I$ obtained as the composition
  \[ \widehat{\Pi}_I: X^{[m]} \overset{\Pi_I}{\longrightarrow} X^I
  \overset{\rho_I}{\longrightarrow} \widehat{X}^I
   \]
   where $\Pi_I : X^{[m]}\rightarrow X^I$ is the natural projection and $\rho_I$ is the quotient map
   in the definition of the smash product $\widehat{X}^I$. In addition, let
    $$ W^{J,J'}_I := \bigwedge_{|J|+|J'|} W_i,\ J\cup J' =I,\ \text{where} $$   
     $$   W_i =  \begin{cases}
    X_i ,  &  \text{if $i\in I\backslash (J\cap J')$}; \\
    X_i\wedge X_i ,  &  \text{if $i\in J\cap J'$}.
 \end{cases} $$
  
   Note that if $J\cup J'=I$ and $J\cap J'=\varnothing$, $ W^{J,J'}_I = \widehat{X}^I$. 
   
   Define
     $$ \psi^{J,J'}_{I} : \widehat{X}^I \rightarrow W^{J,J'}_I\ \,\text{as}\ \, \psi^{J,J'}_I=\bigwedge_{i\in I}\psi_i $$
     where $\psi_i: X_i\rightarrow W_i$ is defined by
     $$ \psi_i =  \begin{cases}
    id ,  &  \text{if $i\in I\backslash (J\cap J')$}; \\
    \Delta_i: X_i\rightarrow X_i\wedge X_i ,  &  \text{if $i\in J\cap J'$}.
 \end{cases}  $$
 where $\Delta_i : X_i\rightarrow X_i\times X_i\rightarrow X_i\wedge X_i $ is the \emph{reduced diagonal} of $X_i$.
 
  Note that the smash products $W^{J,J'}_I$ and $\widehat{X}^J\wedge \widehat{X}^{J'}$ have the same factors, but in a different order arising from the natural shuffles. Let
   \begin{equation} \label{Equ:Shuffle}
      \Theta^{J,J'}_{I}: W^{J,J'}_I \rightarrow \widehat{X}^J\wedge \widehat{X}^{J'}, \, J\cup J'=I, 
      \end{equation}
   be the natural homeomorphism given by a shuffle. Define
   the \emph{partial diagonal} 
   \begin{equation} \label{Equ:Partial-Diagonal}
     \widehat{\Delta}^{J,J'}_{I} :
     \widehat{X}^I \xlongrightarrow{\ \psi^{J,J'}_{I}} W^{J,J'}_I \xlongrightarrow{\ \Theta^{J,J'}_{I}} \widehat{X}^J\wedge \widehat{X}^{J'} 
    \end{equation}
    be the composition of $\Theta^{J,J'}_{I}$ and $ \psi^{J,J'}_{I}$.    
    There is a commutative diagram
    \[   \xymatrix{
          X^{[m]}\ar[d]_{\widehat{\Pi}_I} \ar[r]^{\Delta^X_{[m]}\quad \ }
                & X^{[m]}\wedge X^{[m]} \ar[d]^{\widehat{\Pi}_J\wedge \widehat{\Pi}_{J'}}  \\
            \widehat{X}^I  \ar[r]^{ \widehat{\Delta}^{J,J'}_{I}\ \ } & \widehat{X}^J\wedge \widehat{X}^{J'} 
                 }  \] 
  where $\Delta^X_{[m]}$ is the reduced diagonal map of $X^{[m]}$. 
  
  Let $\mathbf{k}$ denote a commutative ring with a unit. For any $J\subseteq [m]$, there is a homomorphism of rings given by the \emph{reduced cross product} $\times$ (see~\cite[p.\,223]{Hatcher02}):
  \[  \bigotimes_{j\in J} \widetilde{H}^*(X_j;\mathbf{k})
   \xlongrightarrow{\ \, \normaltimes\ \, } \widetilde{H}^*(\widehat{X}^{J};\mathbf{k}). \]
 In particular, this ring homomorphism becomes a ring isomorphism if all (possibly except one)
  $\widetilde{H}^*(X_j;\mathbf{k})$ are free $\mathbf{k}$-modules (see~\cite[Theorem\,3.21]{Hatcher02}).

\begin{lem} \label{Lem:Cross-Product-Pull-Back}
  For any $\phi_j \in \widetilde{H}^*(X_j;\mathbf{k})$,
  $j\in J$ and any $\phi'_j \in \widetilde{H}^*(X_j;\mathbf{k})$, $j\in J'$, 
  $$ \big( \widehat{\Delta}^{J,J'}_{I} \big)^*: 
  H^*\big(\widehat{X}^J\wedge \widehat{X}^{J'};\mathbf{k} \big) \longrightarrow H^*\big(\widehat{X}^I;\mathbf{k}\big),\  I=J\cup J', $$
  $$ \big( \widehat{\Delta}^{J,J'}_{I} \big)^* \Big(\big( \underset{j\in J}{\largetimes} \phi_j \big) \times 
  \big( \underset{j\in J'}{\largetimes} \phi'_j \big) \Big)=
  \big( \underset{j\in J\backslash J'}{\largetimes} \phi_j \big) \times  \big( \underset{j\in J'\backslash J}{\largetimes} \phi'_j \big) \times \big( 
  \underset{j\in J\cap J'}{\largetimes} \Delta^*_j(\phi_j \times \phi'_j)
  \big). $$
  \end{lem}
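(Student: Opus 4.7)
The plan is to unpack the definition $\widehat{\Delta}^{J,J'}_{I} = \Theta^{J,J'}_{I}\circ \psi^{J,J'}_{I}$ and apply naturality of the reduced cross product with respect to smash products, once for each of the two composed maps.

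First, I would handle the shuffle homeomorphism $\Theta^{J,J'}_{I}: W^{J,J'}_I \to \widehat{X}^J\wedge \widehat{X}^{J'}$. This map merely permutes the smash factors so that those indexed by $J$ precede those indexed by $J'$ (with the duplicated copies at indices $i \in J\cap J'$ split between the two groups). Since the reduced cross product is associative and commutes with reordering of smash factors (up to the Koszul signs determined by the permutation), the class $\bigl(\underset{j\in J}{\largetimes}\phi_j\bigr)\times \bigl(\underset{j\in J'}{\largetimes}\phi'_j\bigr)$ on $\widehat{X}^J\wedge \widehat{X}^{J'}$ corresponds under $(\Theta^{J,J'}_{I})^*$ to the cross product on $W^{J,J'}_I$ indexed by $I$ whose factor at $i\in J\setminus J'$ is $\phi_i$, at $i\in J'\setminus J$ is $\phi'_i$, and at $i \in J\cap J'$ is the external product $\phi_i\times \phi'_i \in \widetilde{H}^*(X_i\wedge X_i)$.

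Second, I would apply $(\psi^{J,J'}_{I})^* = \bigl(\bigwedge_{i\in I}\psi_i\bigr)^*$. Since the reduced cross product is natural under smash products, i.e.\ $(f\wedge g)^*(\alpha\times \beta) = f^*(\alpha)\times g^*(\beta)$, this pullback acts factor-by-factor:
\[
(\psi^{J,J'}_{I})^*\Bigl(\underset{i\in I}{\largetimes}\beta_i\Bigr) = \underset{i\in I}{\largetimes}\,\psi_i^*(\beta_i).
\]
For $i\in I\setminus (J\cap J')$, $\psi_i=\mathrm{id}$ acts trivially on $\beta_i$; for $i\in J\cap J'$, $\psi_i = \Delta_i$ sends $\beta_i = \phi_i\times\phi'_i$ to $\Delta_i^*(\phi_i\times \phi'_i)\in \widetilde{H}^*(X_i)$. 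Composing the two steps yields the claimed formula.

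The only real obstacle is tracking the Koszul signs produced by the shuffle in the first step, but these are absorbed by the symmetric notation $\underset{j\in J}{\largetimes}$ on the source and target sides, since both are defined with respect to the natural ordering of indices coming from $[m]$ and the shuffle relating them is the same on both sides. Everything else is formal naturality of the cross product together with the definitions of $\psi^{J,J'}_I$ and $\Theta^{J,J'}_I$.
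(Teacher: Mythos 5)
Your proof follows the same route as the paper's: factor $\widehat{\Delta}^{J,J'}_I$ as $\Theta^{J,J'}_I\circ\psi^{J,J'}_I$, pull the cross product back factor-by-factor through $\psi^{J,J'}_I=\bigwedge_{i\in I}\psi_i$ by naturality (picking up $\Delta_j^*(\phi_j\times\phi'_j)$ exactly at $j\in J\cap J'$), and account for the shuffle. The one place you are looser than the paper is the sign bookkeeping: the right-hand side of the formula is grouped as $J\setminus J'$, then $J'\setminus J$, then $J\cap J'$, which is \emph{not} the natural ordering of $I$, so your claim that the Koszul signs are ``absorbed by the symmetric notation'' because both sides use the natural ordering does not quite stand as stated; the paper instead disposes of the signs by asserting explicitly that the transposition $T\colon X\wedge Y\to Y\wedge X$ satisfies $T^*(\phi_Y\times\phi_X)=\phi_X\times\phi_Y$ with no sign, so that the shuffle $\Theta^{J,J'}_I$ permutes the cross-product factors without introducing any $\pm$.
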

  \begin{proof}
   The above formula follows easily from the definition of $ \widehat{\Delta}^{J,J'}_{I}$. 
 Note that the shuffle $\Theta^{J,J'}_{I}$ (see~\eqref{Equ:Shuffle}) sorts all the
 cohomology classes $\{\phi_j\}_{j\in J}$ and $\{\phi'_j\}_{j\in J'}$ in order without introducing any $\pm$ sign. This is because for any space $X$ and $Y$,  
 \begin{align*}
   T: X\wedge Y &\longrightarrow Y\wedge X
   \\
    [(x,y)]&\longmapsto [(y,x)]
 \end{align*}
 induces a group isomorphism $T^*: H^*(Y\wedge X;\mathbf{k}) \longrightarrow H^*(X\wedge Y;\mathbf{k})$ such that
   \begin{align*}
    T^*(\phi_Y \times \phi_X) = \phi_X\times \phi_Y,\ 
     \phi_X \in H^*(X;\mathbf{k}), \phi_Y \in H^*(Y;\mathbf{k}). 
 \end{align*}
 So when $\Theta^{J,J'}_{I}$ transposes the space factors, 
 the cohomology classes in the reduced cross product are transposed accordingly.
 \end{proof} 
 
 The following lemma will be useful for our proof of
 Theorem~\ref{Thm:Cohomology-Ring-Isom} later.
 
 \begin{lem} \label{Lem:Induced-Cup-Prod}
   Let $X$ be a CW-complex and $A ,B$ be two subcomplexes of $X$. The relative cup product 
   $$ H^*(X,A)\otimes H^*(X,B) \overset{\cup}{\longrightarrow} H^*(X,A\cup B) $$ 
   induces a product 
   $$\widetilde{H}^*(X\slash A)\otimes \widetilde{H}^*(X\slash B) \overset{\widetilde{\cup}}{\longrightarrow} \widetilde{H}^*(X\slash ( A\cup B)),$$ 
   which can be factored as 
   $$ \phi\, \widetilde{\cup}\, \phi' = \Delta^*_X (\phi \times \phi'),\ \phi\in \widetilde{H}^*(X\slash A),
    \phi'\in \widetilde{H}^*(X\slash B) $$
    where $\Delta_X : X\rightarrow X\times X$ is the diagonal map and $\phi\times\phi'$ is the reduced cross product of $\phi$ and $\phi'$. 
 \end{lem}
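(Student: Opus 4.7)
The plan is to transport the standard definition of the relative cup product through the quotient isomorphisms provided by the hypothesis that $A$ and $B$ are subcomplexes of $X$.

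First, since $(X,A)$, $(X,B)$, $(X,A\cup B)$ are good pairs of CW-complexes, the quotient maps $q_A\colon X\to X/A$, $q_B\colon X\to X/B$, and $q_{A\cup B}\colon X\to X/(A\cup B)$ induce isomorphisms $q_A^*\colon \widetilde{H}^*(X/A)\cong H^*(X,A)$, and similarly for $B$ and $A\cup B$. Moreover, $(X\times X,\, A\times X\cup X\times B)$ is also a good CW-pair, and the standard identification $(X\times X)/(A\times X\cup X\times B)\cong (X/A)\wedge (X/B)$ gives an isomorphism $\widetilde{H}^*\!\big((X/A)\wedge (X/B)\big)\cong H^*(X\times X,\, A\times X\cup X\times B)$ under which the reduced cross product corresponds to the relative cross product (see \cite[p.~277]{Hatcher02}). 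The induced product $\widetilde{\cup}$ on the left-hand side is by definition the one obtained from the relative cup product under the isomorphisms $q_A^*,q_B^*,q_{A\cup B}^*$.

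Next, observe that $\Delta_X(A\cup B)\subseteq A\times X\cup X\times B$, so $\Delta_X$ is a map of pairs and descends to a continuous map
\[
\overline{\Delta}\colon X/(A\cup B)\longrightarrow (X\times X)/(A\times X\cup X\times B)\cong (X/A)\wedge (X/B),
\]
with $\overline{\Delta}\circ q_{A\cup B}= \bar{p}\circ\Delta_X$, where $\bar{p}$ is the composition of the quotient maps. The relative cup product is the composition
\[
H^*(X,A)\otimes H^*(X,B)\xrightarrow{\ \times\ } H^*(X\times X,\,A\times X\cup X\times B)\xrightarrow{\ \Delta_X^*\ } H^*(X,A\cup B),
\]
as in \cite[p.~209]{Hatcher02}. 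Translating this through the isomorphisms above, the product $\widetilde{\cup}$ is the composition of the reduced cross product with $\overline{\Delta}^*$, which is what is meant by $\Delta_X^*(\phi\times\phi')$ in the statement.

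The only real content is thus the commutativity of the diagram
\[
\xymatrix{
H^*(X,A)\otimes H^*(X,B)\ar[r]^-{\times}\ar[d]^{\cong} & H^*(X\times X,\,A\times X\cup X\times B)\ar[r]^-{\Delta_X^*}\ar[d]^{\cong} & H^*(X,A\cup B)\ar[d]^{\cong}\\
\widetilde{H}^*(X/A)\otimes\widetilde{H}^*(X/B)\ar[r]^-{\times} & \widetilde{H}^*\!\big((X/A)\wedge (X/B)\big)\ar[r]^-{\overline{\Delta}^{\,*}} & \widetilde{H}^*\!\big(X/(A\cup B)\big)
}
\]
which I would verify square-by-square: the left square is naturality of the cross product under quotient maps of good pairs, and the right square commutes by $\bar{p}\circ\Delta_X=\overline{\Delta}\circ q_{A\cup B}$. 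The main (minor) obstacle is simply bookkeeping the identification $(X\times X)/(A\times X\cup X\times B)\cong (X/A)\wedge (X/B)$ compatibly with cross products; once that is in place, the factorization $\phi\,\widetilde{\cup}\,\phi'=\overline{\Delta}^{\,*}(\phi\times\phi')$ is immediate.
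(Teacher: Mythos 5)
Your proposal is correct and follows essentially the same route as the paper: the paper's proof is precisely the commutative diagram you write down, with the vertical quotient isomorphisms for good CW-pairs and the two squares verified by naturality of the cross product and the relation $\bar p\circ\Delta_X=\overline{\Delta}\circ q_{A\cup B}$. The only detail you omit is the paper's brief remark on the degenerate case where $A$ or $B$ is empty, in which $\widetilde H^*(X\slash A)$ or $\widetilde H^*(X\slash B)$ should be replaced by $H^*(X)$.
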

 \begin{proof}
    This can be verified directly from the following
    diagram when $A,B$ are nonempty.
    \begin{equation*}\tiny
       \xymatrix{
           H^*(X,A)\otimes H^*(X,B)  \ar[r]^{\scalebox{1}{$\times$}\qquad}_{\text{relative cross product}\qquad\ }
                & H^*\big( X\times X, (A\times X)\cup (X\times B) \big)  \ar[r]^{\qquad\quad\ \
                \, \Delta^*_X} &
                 H^*(X,A\cup B)  \\
  \underset{\scalebox{1}{ $\widetilde{H}^*(X\slash A)$}}{  
   \underset{ \scalebox{1}{$\rotatebox{270}{$\cong$}$} }{H^*(X\slash A,A\slash A)} }\otimes  
       \underset{\scalebox{1}{ $\widetilde{H}^*(X\slash B)$}}{  
   \underset{ \scalebox{1}{$\rotatebox{270}{$\cong$}$} }{ H^*(X\slash B, B\slash B)}} \ar[u]_{\scalebox{1}{$\cong$}} \ar[r]^{\scalebox{1}{$\times$}\qquad\   
       \quad } & 
      \underset{\scalebox{1}{ $\widetilde{H}^*(X\slash A\wedge X\slash B)$}}{  
   \underset{ \scalebox{1}{$\rotatebox{270}{$\cong$}$} }{  H^*\big(X\slash A\times X\slash B, (A\slash A\times X\slash B)\cup (X\slash A \times B\slash B)\big) }} \ar[u]^{\scalebox{1}{$\cong$}}
       &  \underset{\scalebox{1}{ $\widetilde{H}^*(X\slash (A\cup B))$}}{  
   \underset{ \scalebox{1}{$\rotatebox{270}{$\cong$}$} }{ H^*\big( X\slash (A\cup B), A\cup B\slash A\cup B\big) }}
   \ar[u]^{\scalebox{1}{$\cong$}}
                 } 
    \end{equation*}
    where the lower $\overset{\times}{\longrightarrow}$ is the reduced cross product on $\widetilde{H}^*(X\slash A)\otimes \widetilde{H}^*(X\slash B)$. \vskip .1cm
    
    If $A$ or $B$ is empty, we should replace
    $\widetilde{H}^*(X\slash A)$ or $\widetilde{H}^*(X\slash B)$ by $H^*(X)$ in the above diagram.
    Moreover, since $H^*(X)\cong \widetilde{H}^*(X)\oplus \Z$, the $\widetilde{\cup}$ on
    $\widetilde{H}^*(X)$ is just the restriction of
    $\cup$ from $H^*(X)$. The lemma is proved.  
 \end{proof}

  Another useful fact is when 
 $X_i$ is the suspension of some space, the reduced diagonal $ \Delta_i : X_i \rightarrow X_i \wedge X_i $ is 
 null-homotopic (see~\cite{BBCG12}). So we have the following lemma. 
 \begin{lem} \label{Lem:Reduced-Diagonal-Null}
   If for some $j\in J\cap J'$, $X_j$ is a suspension space, then the partial diagonal 
  $ \widehat{\Delta}^{J,J'}_{I} :
     \widehat{X}^I \rightarrow \widehat{X}^J\wedge \widehat{X}^{J'}$ is null-homotopic, where $I= J\cup J'$.
     \end{lem}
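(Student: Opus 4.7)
The plan is to reduce the null-homotopy of the full partial diagonal to the null-homotopy of one factor in its smash-product description. Recall that by definition
$$\widehat{\Delta}^{J,J'}_I = \Theta^{J,J'}_I \circ \psi^{J,J'}_I = \Theta^{J,J'}_I \circ \Bigl(\bigwedge_{i\in I}\psi_i\Bigr),$$
where $\psi_i$ is the identity for $i\in I\setminus(J\cap J')$ and the reduced diagonal $\Delta_i : X_i \to X_i\wedge X_i$ for $i\in J\cap J'$. Since $\Theta^{J,J'}_I$ is a homeomorphism, it suffices to show $\psi^{J,J'}_I$ is null-homotopic.

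The first step is to invoke the classical fact that the reduced diagonal of a suspension space is null-homotopic: if $X_j = \mathbf{\Sigma} Y$, then $\Delta_j : \mathbf{\Sigma} Y \to \mathbf{\Sigma} Y \wedge \mathbf{\Sigma} Y$ is based-homotopic to the constant map. The standard argument uses the comultiplication structure of a suspension: writing a point of $\mathbf{\Sigma} Y$ as $[t, y]$ with $t\in [0,1]$, one can slide the two copies of $[t,y]$ in $[t,y]\wedge[t,y]$ to $[0,y]\wedge[t,y]$ via the homotopy $[s t,y]\wedge[t,y]$, $s\in [0,1]$, and the target is the base-point of the smash product.

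The second step is to observe that the smash of any based map with a null-homotopic based map is null-homotopic: if $f : A\to B$ is null-homotopic via $H : A\wedge I_+ \to B$, then for any $g : C\to D$ the map $f\wedge g : A\wedge C \to B\wedge D$ is null-homotopic via $H\wedge g$, because a homotopy from the constant map in the smash factorizes through the collapsed subspace $A\vee C$. Applying this to $\psi^{J,J'}_I = \bigwedge_{i\in I}\psi_i$ with the distinguished factor $\psi_j = \Delta_j$ null-homotopic (since $j\in J\cap J'$ and $X_j$ is a suspension by hypothesis) gives that the entire smash $\psi^{J,J'}_I$ is null-homotopic.

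Composing with the homeomorphism $\Theta^{J,J'}_I$ preserves null-homotopy, so $\widehat{\Delta}^{J,J'}_I$ is null-homotopic, which is the desired conclusion. The only genuinely non-formal ingredient is the classical null-homotopy of the reduced diagonal on a suspension; everything else is routine manipulation of smash products and homotopies, so I do not anticipate any real obstacle.
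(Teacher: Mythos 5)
Your proposal is correct and follows the same route the paper takes (the paper simply cites the classical fact that the reduced diagonal of a suspension is null-homotopic and leaves the rest implicit): one factor $\psi_j=\Delta_j$ of the smash $\psi^{J,J'}_I=\bigwedge_{i\in I}\psi_i$ is null-homotopic, a smash of based maps with one null-homotopic factor is null-homotopic, and composing with the shuffle homeomorphism $\Theta^{J,J'}_I$ preserves this. Your filled-in details (the explicit homotopy $[st,y]\wedge[t,y]$ on the suspension and the homotopy $[H(a,t),g(c)]$ on the smash, both of which descend to the quotients) are sound.
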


    Now we are ready to give a proof of Theorem~\ref{Thm:Cohomology-Ring-Isom}.
     Our argument is parallel to the argument used in the proof of~\cite[Theorem 1.4]{BBCG12}.
  
 \begin{proof}[\textbf{Proof of Theorem~\ref{Thm:Cohomology-Ring-Isom}}]
  \ \vskip .1cm
  
  For brevity, we will use the following notation in the proof.
   \begin{align*}
         Q_+\times (D^2)^{[m]} &:= Q_+\times \prod_{j\in [m]} D^2_{(j)}; \ \   Q_+\wedge \widehat{D^2}^J :=
          Q_+\wedge \bigwedge_{j\in J} D^2_{(j)}.
    \end{align*}

  Considering the partial diagonals~\eqref{Equ:Partial-Diagonal} for $Q_+\times (D^2)^{[m]}$, we obtain a map 
  $$ \widehat{\Delta}^{J,J'}_{J\cup J',Q_+} :   Q_+\wedge \widehat{D^2}^{J\cup J'} \longrightarrow  \big( Q_+\wedge \widehat{D^2}^{J} \big) \wedge \big( Q_+\wedge \widehat{D^2} ^{J'}\big)$$
  for any $J, J'\subseteq [m]$ and a commutative diagram:
   \begin{equation} \label{Equ:Diagram-Q-D2-m}
       \xymatrix{
          Q_+\times (D^2)^{[m]}\ar[d]_{\widehat{\Pi}_{J\cup J'}} \ar[r]^{\Delta^{Q_+, D^2}_{[m]}\qquad \quad\ \ }
               \, & \, \big( Q_+\times (D^2)^{[m]} \big)\wedge \big( Q_+\times (D^2)^{[m]} \big) \ar[d]^{\widehat{\Pi}_{J}\wedge \widehat{\Pi}_{J'}}  \\
            Q_+\wedge \widehat{D^2}^{J\cup J'}  \ar[r]^{ \widehat{\Delta}^{J,J'}_{J\cup J',Q_+}\ \ \ \qquad }  &\,  \big(Q_+ \wedge \widehat{D^2}^{J}
            \big) \wedge \big(Q_+\wedge \widehat{D^2}^{J'} 
               \big)  }  
   \end{equation}      
       where $\Delta^{Q_+,D^2}_{[m]}$ is the reduced diagonal map of $Q_+\times (D^2)^{[m]}$.   
  By restricting the above diagram to $\mathrm{colim}(\mathbf{D}_+)$,    
   we obtain a commutative diagram for $\forall J , J'\subseteq [m]$:
   \begin{equation} \label{Equ:Diagram-Z-Q}
       \xymatrix{
         \mathrm{colim}(\mathbf{D}_+) \ar[d]_{\widehat{\Pi}_{J\cup J'}}  \ar[r]^{\Delta^{Q_+,D^2}_{[m]}\qquad \ \ }
               \, & \, \mathrm{colim}(\mathbf{D}_+) \wedge \mathrm{colim}(\mathbf{D}_+) \ar[d]^{\widehat{\Pi}_{J}\wedge \widehat{\Pi}_{J'}}  \\
        \mathrm{colim} \big(\widehat{\mathbf{D}}^{J\cup J'}_+\big)  \ar[r]^{ \widehat{\Delta}^{J,J'}_{J\cup J',Q_+} \ \qquad }\, & \, \mathrm{colim} \big(\widehat{\mathbf{D}}^{J}_+\big) \wedge 
        \mathrm{colim} \big(\widehat{\mathbf{D}}^{J'}_+\big).
         }  
   \end{equation}

       Given cohomology classes $u\in \widetilde{H}^*\big( \mathrm{colim} \big(\widehat{\mathbf{D}}^{J}_+\big) \big)$ and $v\in
 \widetilde{H}^*\big(\mathrm{colim} \big(\widehat{\mathbf{D}}^{J'}_+\big)\big)$, let
 \begin{equation}\label{Equ:Ring-Structure-2}
   u  \circledast v = \big( \widehat{\Delta}^{J,J'}_{J\cup J',Q_+} \big)^*
 (u\times v) \in \widetilde{H}^* \big(   \mathrm{colim} \big(\widehat{\mathbf{D}}^{J\cup J'}_+\big) \big) 
 \end{equation}
 where $u\times v\in \widetilde{H}^*\big( \mathrm{colim} \big(\widehat{\mathbf{D}}^{J}_+\big) \wedge 
        \mathrm{colim} \big(\widehat{\mathbf{D}}^{J'}_+\big) \big) $ is the reduced cross product of $u$ and $v$.
 This defines a ring structure on $\bigoplus_{J
 \subseteq [m]} \widetilde{H}^*\big( \mathrm{colim}\big(\widehat{\mathbf{D}}^{J}_+  \big) \big)$.
 
 The commutativity of diagram~\eqref{Equ:Diagram-Z-Q} implies
  \[ \widehat{\Pi}^*_{J\cup J'}(u \circledast v) = \widehat{\Pi}^*_{J}(u)\cup \widehat{\Pi}^*_{J'}(v),  \]
  where $\cup$ is the cup product for 
  $\mathrm{colim}(\mathbf{D}_+) $. 
  
  By~\eqref{Equ:Homotopy-Colimits-1}, the direct sum of $\widehat{\Pi}^*_{J}$ induces an additive isomorphism  
  \begin{equation} \label{Equ:Cohomology-Iso-All}
     \bigoplus_{J\subseteq [m]} \widehat{\Pi}^*_{J} : \bigoplus_{J \subseteq [m]} \widetilde{H}^*\big( \mathrm{colim}\big(\widehat{\mathbf{D}}^{J}_+  \big) \big) \longrightarrow \widetilde{H}^*\big(\mathrm{colim}(\mathbf{D}_+)\big)=\widetilde{H}^*\big((D^2,S^1)^{Q}_+\big).
      \end{equation}
    
 Then since
  $\widehat{\Pi}^*_{J}: \widetilde{H}^*\big( \mathrm{colim} \big(\widehat{\mathbf{D}}^{J}_+\big)  \big)\rightarrow \widetilde{H}^*\big(\mathrm{colim}(\mathbf{D}_+)\big)$
   is a ring homomorphism for every $J\subseteq [m]$, we can assert that $\bigoplus_{J\subseteq [m]} \widehat{\Pi}^*_{J}$ is a ring isomorphism. Then by the proof of Theorem~\ref{Thm:Stable-Decomp-Main}, this induces a ring isomorphism
   \begin{equation} \label{Equ:Cohomology-Ring-Isomor}
    \bigoplus_{J\subseteq [m]} \widehat{\Pi}^*_{J}:
         \Big( \bigoplus_{J\subseteq [m]}\widetilde{H}^*\big(Q\slash F_J   \wedge  \bigwedge_{j\in  J} S^1_{(j)} \big), \circledast \Big) \longrightarrow \widetilde{H}^*\big((D^2,S^1)^{Q}\big).
     \end{equation}

  Finally, let us show 
 how to define a ring isomorphism from $(\mathcal{R}^*_Q,\Cup)$ 
 to the cohomology ring $H^*\big((D^2,S^1)^{Q}\big)$ via $\bigoplus_{J\subseteq [m]} \widehat{\Pi}^*_{J}$.

    For any $1\leq j \leq m$, let $\iota^1_{(j)}$ denote a generator of $\widetilde{H}^1(S^1_{(j)})$. Then for any subset $J=\{j_1,\cdots,j_s\}\subseteq [m]$ with
  $j_1 <\cdots < j_s$, we have a generator
 $$ \iota^J = \iota^1_{(j_1)}\times \cdots\times \iota^1_{(j_s)} \in \widetilde{H}^{|J|}\Big(\bigwedge_{j\in  J} S^1_{(j)})\Big). $$

 For each $J\subseteq [m]$, there is a canonical linear isomorphism (see~\cite[p.\,223]{Hatcher02}):
   \begin{align*} 
      \widetilde{H}^*(Q\slash F_J) &\overset{\cong}{\longrightarrow} \widetilde{H}^*\Big(Q\slash F_J 
        \wedge \bigwedge_{j\in  J} S^1_{(j)}\Big) \cong \widetilde{H}^*\big(\mathbf{\Sigma}^{|J|}(Q\slash F_J)\big)\\
      \phi\ \ &\longmapsto\ \ \phi\times \iota^J. \nonumber
 \end{align*}

   Let   \begin{equation*}
    \widetilde{\mathcal{R}}^*_Q :=\bigoplus_{J \subseteq [m]} \widetilde{H}^*(Q\slash F_J), \ \text{then}\
    \mathcal{R}^*_Q =  \widetilde{\mathcal{R}}^*_Q \oplus \Z.
    \end{equation*} 
    
    By Lemma~\ref{Lem:Induced-Cup-Prod}, there is natural ring structure on $\widetilde{\mathcal{R}}^*_Q$,
    denoted by $\widetilde{\Cup}$,
    that is induced from the product $\Cup$ on $\mathcal{R}^*_Q$ (see~\eqref{Equ:Def-R-Q}).     
    We have a commutative diagram 
     \begin{equation}  \label{Equ:Diagram-Induced-Prod}
   \xymatrix{
           H^*(Q,F_J) \otimes H^*(Q,F_{J'}) \ar[d] \ar[r]^{\quad\ \ \ \ \scalebox{0.85}{$\Cup$}}
                &   H^*(Q,F_{J\cup J'}) \ar[d]  \\
           \widetilde{H}^*(Q\slash F_J) \otimes \widetilde{H}^*(Q\slash F_{J'}) \ar[r]^{\quad\ \ \ \  \, \scalebox{0.85}{$\widetilde{\Cup}$}} & \widetilde{H}^*(Q\slash F_{J\cup J'}) .
                 }  
            \end{equation}      
     \begin{itemize}
     \item For any $J,J'\subseteq [m]$ with $J\cap J'\neq \varnothing$, $\widetilde{\Cup}$ is trivial.
     
     \item For any $J,J'\subseteq [m]$ with $J\cap J=\varnothing$, $\widetilde{\Cup} =\widetilde{\cup}$ is induced from the relative cup product $\cup$ on $H^*(Q,F_J)\otimes H^*(Q,F_{J'})$ (see Lemma~\ref{Lem:Induced-Cup-Prod}).
   \end{itemize}
    
    It is clear that $\big(\mathcal{R}^*_Q, \Cup \big)$
    and $\big(\widetilde{\mathcal{R}}^*_Q, \widetilde{\Cup} \big)$ determine 
    each other. 
    
  \begin{itemize}
   \item When $J\cap J' \neq \varnothing$,
   since $(D^2,S^1) \cong (\Sigma D^1, \Sigma S^0)$ is a
  pair of suspension spaces, Lemma~\ref{Lem:Reduced-Diagonal-Null} implies that
  $$\widehat{\Delta}^{J,J'}_{J\cup J',Q_+}:
   \mathrm{colim} \big(\widehat{\mathbf{D}}^{J\cup J'}_+\big) \rightarrow 
   \mathrm{colim} \big(\widehat{\mathbf{D}}^{J}_+\big) \wedge 
        \mathrm{colim} \big(\widehat{\mathbf{D}}^{J'}_+\big)
        $$
    is null-homotopic. So by~\eqref{Equ:Ring-Structure-2},
    $\circledast$ is trivial in this case which corresponds to the definition of $\widetilde{\Cup}$ on $\widetilde{\mathcal{R}}^*_Q$.

  \item When $J\cap J'=\varnothing$, suppose in~\eqref{Equ:Cohomology-Ring-Isomor}, we have elements
  \begin{align*}
  \ \,  u &= \phi\times \iota^J \in \widetilde{H}^*\Big(Q\slash F_J\wedge \bigwedge_{j\in  J} S^1_{(j)} \Big) = \widetilde{H}^*\Big(\mathbf{\Sigma}^{|J|} \big( Q\slash F_J\big) \Big),\\
     v &= \phi'\times \iota^{J'} \in \widetilde{H}^*\Big( Q\slash F_{J'}   \wedge  \bigwedge_{j\in  J'} S^1_{(j)}) \Big) =  \widetilde{H}^*\Big(\mathbf{\Sigma}^{|J'|} \big( Q\slash F_{J'}\big) \Big).
    \end{align*}    
 Then Lemma~\ref{Lem:Cross-Product-Pull-Back} and Lemma~\ref{Lem:Induced-Cup-Prod} imply that
  \begin{align*}  
       u \circledast v = \big( \widehat{\Delta}^{J,J'}_{J\cup J',Q_+} \big)^*
  \big( ( \phi\times \iota^{J} ) \times (\phi'\times \iota^{J'}) \big) &= (-1)^{|J||\phi'|}
   (\phi\, \widetilde{\cup}\, \phi') \times 
  \iota^{J\cup J'}. 
  \end{align*} 
   So we have a commutative diagram below, which
    implies that the product $\widetilde{\Cup}$ on $\widetilde{\mathcal{R}}^*_{Q}$  corresponds to the product $\circledast$ in~\eqref{Equ:Cohomology-Ring-Isomor} in this case.        
   \begin{equation} 
    \label{Equ:Diagram-Commute}
   \xymatrix{
             \widetilde{H}^*(Q\slash F_J) \otimes \widetilde{H}^*(Q\slash F_{J'}) \ar[d]_{\scalebox{0.9}{$\times \iota^{J} \otimes \times \iota^{J'}$} }  \ar[r]^{\quad\ \ \ \ \scalebox{0.85}{$\widetilde{\Cup}$}} & \widetilde{H}^*(Q\slash F_{J\cup J'}) \ar[d]_{\scalebox{0.9}{$\times \iota^{J\cup J'}$}} \\
           \widetilde{H}^*\big(\mathbf{\Sigma}^{|J|}(Q\slash F_J)\big) \otimes  \widetilde{H}^*\big(\mathbf{\Sigma}^{|J’|}(Q\slash F_{J’})\big) \ar[r]^{\ \ \, \qquad \ \scalebox{0.85}{$\circledast$} } & \widetilde{H}^*\big(\mathbf{\Sigma}^{|J\cup J'|}(Q\slash F_{J\cup J'})\big)
                 }  
            \end{equation}   
  \end{itemize}

   Combining the above arguments, we obtain isomorphisms of rings:
    $$ \big(\widetilde{\mathcal{R}}^*_Q, \widetilde{\Cup} \big) \overset{\cong}{\longrightarrow} 
   \Big(  \bigoplus_{J\subseteq [m]}\widetilde{H}^*\big(Q\slash F_J   \wedge  \bigwedge_{j\in  J} S^1_{(j)} \big), \circledast \Big) 
   \xlongrightarrow{ 
   \underset{J\subseteq [m]}{\bigoplus}    
    \widehat{\Pi}^*_{J}    
   } \widetilde{H}^*\big((D^2,S^1)^{Q}\big)    
    = \widetilde{H}^*(\mathcal{Z}_Q). $$   
   
  It follows that there is a ring isomorphism from 
   $\big(\mathcal{R}^*_Q, \Cup \big)$
   to $H^*(\mathcal{Z}_Q)$ up to a sign. 
   
   Note that the above ring isomorphism is not degree-preserving. But by the diagram in~\eqref{Equ:Diagram-Commute},
  we can
    make this ring isomorphism degree-preserving by shifting
     the degrees of all the elements in $H^*(Q,F_J)$ up by $|J|$
     for every $J\subseteq [m]$. The theorem is proved.
  \end{proof}

  \section{Polyhedral product over a nice manifold with corners} \label{Sec:Polyhedral-Prod}
    Let $Q$ be a nice manifold with corners whose facets are $F_1,\cdots, F_m$. Let
       \begin{equation*} 
        (\mathbb{X},\mathbb{A}) = \{ (X_j,A_j,a_j) \}^m_{j=1}
      \end{equation*}  
     where $X_j$ and $A_j$ are CW-complexes with a basepoint $a_j\in A_j\subseteq X_j$. 
     
     For any face $f$ of $Q$, define
    \begin{align*}
      (\mathbb{X},\mathbb{A})^{f} &:= 
     f
       \times \prod_{j\in I_f} X_j 
   \times \prod_{j\in [m]\backslash I_f} A_j,\\
 (\mathbb{X},\mathbb{A})^{Q} 
  &:= \bigcup_{f\in \mathcal{S}_Q}   (\mathbb{X},\mathbb{A})^{f} 
   \subseteq Q\times \prod_{j\in [m]} X_j.
   \end{align*}
 
  If $(\mathbb{X},\mathbb{A})= \{(X_j,A_j,a_j)=(X,A,a_0)\}^m_{j=1}$,
  we also denote $ (\mathbb{X},\mathbb{A})^{Q}$ by $ (X, A)^{Q}$.

  We call $(\mathbb{X},\mathbb{A})^{Q}$ the
  \emph{polyhedral product} of $(\mathbb{X},\mathbb{A})$
  over $Q$. Note that in general, the homeomorphism type of $(\mathbb{X},\mathbb{A})^{Q}$ depends on the ordering of the facets of $Q$ and the ordering of the $X_j$'s. We consider $ (\mathbb{X},\mathbb{A})^{Q}$ as an analogue of polyhedral products over a simplicial complex (see~\cite{BP00}). 
  
  In the rest of this section, we assume that
  each of $X_j$ and $A_j$ in $(\mathbb{X},\mathbb{A})$
  is either connected or is a disjoint union of a connected
  CW-complex with its basepoint. Then we can study the stable decomposition and cohomology ring of
 $(\mathbb{X},\mathbb{A})^{Q}$ in the same way as we do for $\mathcal{Z}_Q$.
 
  \begin{itemize}
  \item Let $Q_+=Q\cup q_0$ where $q_0 \notin Q$ is the basepoint of $Q_+$.
  
   \item For any face $f$ of $Q$, let $f_+ = f\cup q_0$ with basepoint $q_0$.
 
 \item For any $(f,L)\in \mathcal{P}_{Q}$, define 
    \begin{align} 
    (\mathbb{X},\mathbb{A})^{(f,L)} &:= 
   f \times \prod_{j\in I_f\backslash L} X_j 
   \times \prod_{j\in [m]\backslash (I_f\backslash L)} A_j
   \label{Equ:X-A-sigma-I-L} \\
      (\mathbb{X},\mathbb{A})^{(f,L)}_+ &:=   (\mathbb{X},\mathbb{A})^{(f,L)} \cup \widehat{q}^{(\mathbb{X},\mathbb{A})}_0 \ \text{where
      $\widehat{q}^{(\mathbb{X},\mathbb{A})}_0$ is the
       basepoint defined by} \nonumber \\
   \widehat{q}^{(\mathbb{X},\mathbb{A})}_0 &:= q_0\times \prod_{j\in [m]} a_j. \nonumber
    \end{align}
 \item Let $(\mathbb{X},\mathbb{A})^{Q}_+ = (\mathbb{X},\mathbb{A})^{Q}
 \cup \widehat{q}^{(\mathbb{X},\mathbb{A})}_0 $ with basepoint $\widehat{q}^{(\mathbb{X},\mathbb{A})}_0$.
  \end{itemize}

  Let $\mathbf{D}_{(\mathbb{X},\mathbb{A})+}: \mathcal{P}_{Q}\rightarrow CW_*$ be the diagram of based CW-complexes where  
  $$\mathbf{D}_{(\mathbb{X},\mathbb{A})+}((f,L))  
  :=  (\mathbb{X},\mathbb{A})^{(f,L)}_+, \ \forall
  (f,L)\in \mathcal{P}_{Q} $$
 and let $(d_{(\mathbb{X},\mathbb{A})+})_{(f,L),(f',L')}:
  \mathbf{D}_{(\mathbb{X},\mathbb{A})+}((f',L')) \rightarrow \mathbf{D}_{(\mathbb{X},\mathbb{A})+}((f,L))$ be the natural inclusion for any $(f,L)\leq (f',L')\in \mathcal{P}_{Q}$. Then we have
 \begin{equation} \label{Equ:D-A-Extra}
   (\mathbb{X},\mathbb{A})^{Q}_+  = \mathrm{colim}(\mathbf{D}_{(\mathbb{X},\mathbb{A})+}) = \bigcup_{(f,L)\in \mathcal{P}_{Q}} (\mathbb{X},\mathbb{A})^{(f,L)}_+ . 
   \end{equation}
   
  By Theorem~\ref{Thm:Generalized-Stable-Decomp}, we can prove the following lemma
  parallel to Lemma~\ref{Lem:Stable-Decomp-plus}.
    \begin{lem} \label{Lem:Stable-Decomp-plus-X-A}
   For any $(f,L)\in \mathcal{P}_{Q}$, there is a natural homeomorphism which commutes with taking the colimit:
    $$ (\mathbb{X},\mathbb{A})^{(f,L)}_+ \cong 
    f_+ \rtimes \Big(  \prod_{j\in I_f\backslash L} X_j 
   \times \prod_{j\in [m]\backslash (I_f\backslash L)} A_j \Big) . $$
   \end{lem}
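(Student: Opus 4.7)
The plan is to imitate the proof of Lemma~\ref{Lem:Stable-Decomp-plus} verbatim, with $D^2_{(j)}$ replaced by $X_j$ and $S^1_{(j)}$ replaced by $A_j$. The essential point is that adjoining the extra base-point $\widehat{q}^{(\mathbb{X},\mathbb{A})}_0$ to $(\mathbb{X},\mathbb{A})^{(f,L)}$ is precisely the effect of taking the half-smash $f_+ \rtimes (-)$, and that this identification can be realized as the restriction of a single global homeomorphism so that it is automatically compatible with the inclusions defining $\mathbf{D}_{(\mathbb{X},\mathbb{A})+}$.

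First I would unpack the definition of $\rtimes$ from Definition~\ref{Def:rtimes-ltimes}. Writing $Y^{(f,L)} := \prod_{j\in I_f\backslash L} X_j \times \prod_{j\in [m]\backslash(I_f\backslash L)} A_j$, the definition gives
\[
 f_+ \rtimes Y^{(f,L)} \;=\; (f\cup q_0)\times Y^{(f,L)} \big/ \bigl(\{q_0\}\times Y^{(f,L)}\bigr).
\]
Collapsing the slice $\{q_0\}\times Y^{(f,L)}$ to a single point produces the disjoint union of $f\times Y^{(f,L)} = (\mathbb{X},\mathbb{A})^{(f,L)}$ with one extra point, and I would identify that extra point with $\widehat{q}^{(\mathbb{X},\mathbb{A})}_0 = q_0\times \prod_{j\in[m]} a_j$. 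This yields the claimed homeomorphism on each $(f,L)\in\mathcal{P}_Q$.

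To ensure naturality and compatibility with colimits, I would define a single global map
\[
 \Psi\colon Q_+ \rtimes \prod_{j\in[m]} X_j \;\longrightarrow\; \Bigl(Q\times \prod_{j\in[m]} X_j\Bigr)\cup \widehat{q}^{(\mathbb{X},\mathbb{A})}_0
\]
that collapses $\{q_0\}\times\prod_j X_j$ to the point $\widehat{q}^{(\mathbb{X},\mathbb{A})}_0$ and is the identity elsewhere, exactly as in the proof of Lemma~\ref{Lem:Stable-Decomp-plus}. Since each $(\mathbb{X},\mathbb{A})^{(f,L)}_+$ sits inside $Q\times\prod_j X_j$ together with the auxiliary base-point, and since $f_+\rtimes Y^{(f,L)}$ sits inside $Q_+\rtimes\prod_j X_j$ in the obvious way, the required homeomorphism is simply the restriction of $\Psi$. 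Because inclusions $(f,L)\le (f',L')$ in $\mathcal{P}_Q$ correspond under both constructions to honest inclusions of subspaces of these global ambient spaces, the restrictions of $\Psi$ assemble into a map of diagrams $\mathcal{P}_Q\to CW_*$, and hence commute with the colimit.

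The only step that needs any care is the last one: checking that the connecting morphisms $(d_{(\mathbb{X},\mathbb{A})+})_{(f,L),(f',L')}$ and their counterparts for the half-smash diagram actually correspond under the restrictions of $\Psi$. This is routine because both are simply inclusions of subspaces inside the common ambient spaces, but one has to be a little careful to check that the base-point $\widehat{q}^{(\mathbb{X},\mathbb{A})}_0$ is preserved consistently across all $(f,L)$. I do not anticipate any obstacle beyond bookkeeping, since once the global map $\Psi$ is in place everything else is formal.
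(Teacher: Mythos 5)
Your proposal is correct and is essentially the paper's own argument: the paper does not write out a proof of this lemma, stating only that it is proved ``parallel to Lemma~\ref{Lem:Stable-Decomp-plus},'' and the proof of that lemma is exactly what you reproduce --- unpacking the half-smash as the quotient of $(f\cup q_0)\times Y^{(f,L)}$ by the slice over $q_0$, and realizing all of these identifications simultaneously as restrictions of the single global homeomorphism $Q_+\rtimes\prod_{j\in[m]}X_j \to \big(Q\times\prod_{j\in[m]}X_j\big)\cup\widehat{q}^{(\mathbb{X},\mathbb{A})}_0$ so that compatibility with the colimit is automatic.
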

  
  So by Theorem~\ref{Thm:Stabel-Decomp-rtimes}, we have 
 \begin{align}\label{Equ:term-Essential-X-A}
   \mathbf{\Sigma} \Big(  (\mathbb{X},\mathbb{A})^{(f,L)}_+ \Big) 
 &\cong \mathbf{\Sigma}\Big( f_+ 
       \rtimes \Big(\prod_{j\in I_f\backslash L} X _j
   \times \prod_{j\in [m]\backslash (I_f\backslash L)} A_j \Big)  \Big) \\
    & \simeq \bigvee_{J \subseteq [m]} \mathbf{\Sigma}\Big(  f_+
       \wedge \bigwedge_{j\in J\cap (I_f\backslash L)} X_j 
   \wedge \bigwedge_{j\in  J\backslash (I_f\backslash L)} A_j \Big) \nonumber
\end{align}

 Accordingly, we define a family of 
 diagrams of 
based CW-complexes,
$$\widehat{\mathbf{D}}^J_{(\mathbb{X},\mathbb{A})+}:  \mathcal{P}_{Q}\rightarrow CW_*, \ J\subseteq [m]$$
   \begin{equation*} 
   \widehat{\mathbf{D}}^J_{(\mathbb{X},\mathbb{A})+}((f,L)) := f_+
       \wedge \bigwedge_{j\in J\cap (I_f\backslash L)} X_j 
   \wedge \bigwedge_{j\in  J\backslash (I_f\backslash L)} A_j, \ \forall
  (f,L)\in \mathcal{P}_{Q}.
    \end{equation*}
 Define $(\widehat{d}^J_{(\mathbb{X},\mathbb{A})+})_{(f,L),(f',L')}: 
 \widehat{\mathbf{D}}^J_{(\mathbb{X},\mathbb{A})+}((f',L'))\rightarrow  \widehat{\mathbf{D}}^J_{(\mathbb{X},\mathbb{A})+}((f,L))$
 to be the natural inclusion for any $(f,L) \leq (f',L')\in \mathcal{P}_{Q}$. The basepoint of $\widehat{\mathbf{D}}^J_{(\mathbb{X},\mathbb{A})+}((f,L))$ is $\big[q_0\times \prod_{j\in J} a_j \big]$. So we have the following
 theorem by~\cite[Theorem 4.3]{BBCG10}.
 
  \begin{thm} \label{Thm:Stable-Decomp-X-A}
  Let $(\mathbb{X},\mathbb{A})= \{ (X_j,A_j,a_j) \}^m_{j=1}$ where each $X_j$ and $A_j$ 
  is either connected or is a disjoint union of a connected
  CW-complex with its basepoint. Then there are homotopy equivalences:
  \begin{equation*} 
  S^1\vee \mathbf{\Sigma}\big( (\mathbb{X},\mathbb{A})^{Q} \big) \simeq
 \mathbf{\Sigma}\big( (\mathbb{X},\mathbb{A})^{Q}_+ \big) =
  \mathbf{\Sigma}\big( \mathrm{colim}(\mathbf{D}_{(\mathbb{X},\mathbb{A})+}) \big)   \simeq \bigvee_{J\subseteq [m]} \mathbf{\Sigma}\big( \mathrm{colim}\big(\widehat{\mathbf{D}}^J_{(\mathbb{X},\mathbb{A})+}\big)\big).  
  \end{equation*}
  This implies
  $$ H^*\big( (\mathbb{X},\mathbb{A})^{Q} \big)
  \cong \widetilde{H}^*( \mathrm{colim}(\mathbf{D}_{(\mathbb{X},\mathbb{A})+}))
  \cong \bigoplus_{J\subseteq [m]} \widetilde{H}^*\big( \mathrm{colim}\big(\widehat{\mathbf{D}}^{J}_{(\mathbb{X},\mathbb{A})+}  \big) \big).  $$
  \end{thm}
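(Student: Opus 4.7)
The proof will follow the same line of argument as that of Theorem~\ref{Thm:Stable-Decomp-Main}, with the pair $(D^2_{(j)}, S^1_{(j)})$ systematically replaced by $(X_j, A_j)$. The essential simplification compared to Theorem~\ref{Thm:Stable-Decomp-Main} is that we do not need to identify each summand $\mathrm{colim}\big(\widehat{\mathbf{D}}^J_{(\mathbb{X},\mathbb{A})+}\big)$ with an explicit space like $\mathbf{\Sigma}^{|J|}(Q/F_J)$; that identification relied on $D^2_{(j)}$ being contractible (used via Lemma~\ref{Lem:Smash-Contract} to kill all $(f,L)$ with $J\cap(I_f\setminus L)\neq \varnothing$), a property which general $X_j$ need not enjoy.

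First, since $(\mathbb{X},\mathbb{A})^{Q}_+$ is the disjoint union of $(\mathbb{X},\mathbb{A})^{Q}$ with the isolated point $\widehat{q}^{(\mathbb{X},\mathbb{A})}_0$, Lemma~\ref{Lem:Suspen-Union} immediately gives the leftmost equivalence $S^1\vee \mathbf{\Sigma}\big((\mathbb{X},\mathbb{A})^{Q}\big) \simeq \mathbf{\Sigma}\big((\mathbb{X},\mathbb{A})^{Q}_+\big)$, and the equality with $\mathbf{\Sigma}(\mathrm{colim}(\mathbf{D}_{(\mathbb{X},\mathbb{A})+}))$ is just~\eqref{Equ:D-A-Extra}. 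For the main homotopy equivalence, the plan is: (i) apply Lemma~\ref{Lem:Stable-Decomp-plus-X-A} to rewrite $(\mathbb{X},\mathbb{A})^{(f,L)}_+\cong f_+\rtimes\big(\prod_{j\in I_f\setminus L}X_j\times \prod_{j\in [m]\setminus (I_f\setminus L)}A_j\big)$; (ii) apply Theorem~\ref{Thm:Stabel-Decomp-rtimes} with base $f_+$ to each $(f,L)$ to obtain, naturally and compatibly with colimits, the wedge decomposition displayed in~\eqref{Equ:term-Essential-X-A}; (iii) assemble these equivalences into a map of diagrams over $\mathcal{P}_Q$ and invoke Theorem~\ref{Thm:Homotopy-Colimit-Lemma} together with the fact that reduced suspension commutes with colimits up to homotopy equivalence (\cite[Theorem 4.3]{BBCG10}). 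Putting these together yields
\begin{equation*}
\mathbf{\Sigma}\big(\mathrm{colim}(\mathbf{D}_{(\mathbb{X},\mathbb{A})+})\big)
\simeq \mathrm{colim}\big(\mathbf{\Sigma}(\mathbf{D}_{(\mathbb{X},\mathbb{A})+})\big)
\simeq \bigvee_{J\subseteq[m]} \mathbf{\Sigma}\big(\mathrm{colim}(\widehat{\mathbf{D}}^{J}_{(\mathbb{X},\mathbb{A})+})\big),
\end{equation*}
which is precisely the claimed decomposition.

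The cohomology statement then follows formally. Indeed, $\mathrm{colim}(\mathbf{D}_{(\mathbb{X},\mathbb{A})+}) = (\mathbb{X},\mathbb{A})^{Q}\sqcup\{\widehat{q}^{(\mathbb{X},\mathbb{A})}_0\}$, so $\widetilde{H}^*\big(\mathrm{colim}(\mathbf{D}_{(\mathbb{X},\mathbb{A})+})\big)\cong H^*\big((\mathbb{X},\mathbb{A})^{Q}\big)$; the wedge decomposition of the suspension together with the suspension isomorphism in cohomology then produces the claimed direct sum splitting.

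I do not anticipate any serious obstacle: the hypothesis that each $X_j$ and $A_j$ be either connected or the disjoint union of a connected CW-complex with its base-point is precisely what is needed to justify the use of Theorem~\ref{Thm:Stabel-Decomp-rtimes} at step (ii). The main point requiring care is checking that the stable decomposition map of Theorem~\ref{Thm:Stabel-Decomp-rtimes} is natural enough to yield a map of diagrams over $\mathcal{P}_Q$ — but this naturality (and compatibility with colimits) is built into the statement of that theorem, so the cofibration hypotheses required by Theorem~\ref{Thm:Homotopy-Colimit-Lemma} are satisfied by the same inclusions as in the $\mathcal{Z}_Q$ case, and the argument goes through verbatim.
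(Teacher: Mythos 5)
Your proposal is correct and follows essentially the same route as the paper: the paper establishes Lemma~\ref{Lem:Stable-Decomp-plus-X-A}, applies Theorem~\ref{Thm:Stabel-Decomp-rtimes} to each $(\mathbb{X},\mathbb{A})^{(f,L)}_+$ to get~\eqref{Equ:term-Essential-X-A}, and then assembles the pieces over $\mathcal{P}_Q$ using the commutation of reduced suspension with colimits, exactly as you outline. Your observation that the identification of the summands $\mathrm{colim}\big(\widehat{\mathbf{D}}^J_{(\mathbb{X},\mathbb{A})+}\big)$ with explicit spaces is deliberately deferred (since it requires contractibility hypotheses not assumed here) also matches the paper's organization.
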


  Moreover, using the partial diagonal map for $Q_+\times \prod_{j\in [m]} X_j$ as in the proof of Theorem~\ref{Thm:Cohomology-Ring-Isom}, 
  we have a diagram parallel to diagram~\eqref{Equ:Diagram-Z-Q} for any $J , J'\subseteq [m]$:
   \begin{equation} \label{Equ:Diagram-Z-Q-X-A}
       \xymatrix{
         \mathrm{colim}(\mathbf{D}_{(\mathbb{X},\mathbb{A})+}) \ar[d]_{\widehat{\Pi}_{J\cup J'}}  \ar[r]^{\Delta^{Q_+,\mathbb{X}}_{[m]}\qquad \ \ \ \ \ }
               \, & \, \mathrm{colim}(\mathbf{D}_{(\mathbb{X},\mathbb{A})+}) \wedge \mathrm{colim}(\mathbf{D}_{(\mathbb{X},\mathbb{A})+}) \ar[d]^{\widehat{\Pi}_{J}\wedge \widehat{\Pi}_{J'}}  \\
        \mathrm{colim} \big(\widehat{\mathbf{D}}^{J\cup J'}_{(\mathbb{X},\mathbb{A})+}\big)  \ar[r]^{ \widehat{\Delta}^{J,J'}_{J\cup J',Q_+} \quad\ \ \qquad }\ & \ \mathrm{colim} \big(\widehat{\mathbf{D}}^{J}_{(\mathbb{X},\mathbb{A})+}\big) \wedge 
        \mathrm{colim} \big(\widehat{\mathbf{D}}^{J'}_{(\mathbb{X},\mathbb{A})+}\big).
         }  
   \end{equation}
    
   Similarly, we can obtain the following theorem parallel to Theorem~\ref{Thm:Cohomology-Ring-Isom}.
   
   \begin{thm} \label{Thm:Cohomology-Ring-Isom-X-A}
  Let $(\mathbb{X},\mathbb{A})= \{ (X_j,A_j,a_j) \}^m_{j=1}$ where each of $X_j$ and $A_j$ 
  is either connected or is a disjoint union of a connected CW-complex with its basepoint. Then
 there is a ring isomorphism  
  \begin{equation*} 
      \bigoplus_{J\subseteq [m]} \widehat{\Pi}^*_{J} : \bigoplus_{J\subseteq [m]} \widetilde{H}^*\big( \mathrm{colim}\big(\widehat{\mathbf{D}}^{J}_{(\mathbb{X},\mathbb{A})+}  \big) \big) \longrightarrow \widetilde{H}^*\big(\mathrm{colim}(\mathbf{D}_{(\mathbb{X},\mathbb{A})+})\big)\cong H^*\big( (\mathbb{X},\mathbb{A})^{Q} \big), 
  \end{equation*}  
    $$ \text{ where the product $\circledast$ on $ \bigoplus_{J\subseteq [m]} \widetilde{H}^*\big( \mathrm{colim}\big(\widehat{\mathbf{D}}^{J}_{(\mathbb{X},\mathbb{A})+}  \big) \big)$ is defined by} \qquad\qquad\qquad\quad\quad
    $$
    \begin{equation} \label{Equ:Product-General-X-A}
      \widetilde{H}^*\big( \mathrm{colim}\big(\widehat{\mathbf{D}}^{J}_{(\mathbb{X},\mathbb{A})+}  \big) \big) \otimes
    \widetilde{H}^*\big( \mathrm{colim}\big(\widehat{\mathbf{D}}^{J'}_{(\mathbb{X},\mathbb{A})+}  \big) \big)
      \xlongrightarrow{\circledast} \widetilde{H}^*\big( \mathrm{colim}\big(\widehat{\mathbf{D}}^{J\cup J'}_{(\mathbb{X},\mathbb{A})+}\big)\big) 
      \end{equation}
       $$  u \circledast v := 
      \big( \widehat{\Delta}^{J,J'}_{J\cup J',Q_+} \big)^*(u \times v). $$
      \end{thm}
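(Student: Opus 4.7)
The plan is to mimic the argument used in the proof of Theorem~\ref{Thm:Cohomology-Ring-Isom}, replacing $(D^2,S^1)$ by the general pair $(\mathbb{X},\mathbb{A})$ throughout. Concretely, the additive isomorphism side is already handled by Theorem~\ref{Thm:Stable-Decomp-X-A}: taking reduced cohomology in the wedge decomposition gives a natural isomorphism of abelian groups
\begin{equation*}
  \bigoplus_{J\subseteq [m]} \widehat{\Pi}^*_{J}\colon \bigoplus_{J\subseteq [m]} \widetilde{H}^*\bigl(\mathrm{colim}(\widehat{\mathbf{D}}^J_{(\mathbb{X},\mathbb{A})+})\bigr)\xlongrightarrow{\ \cong\ } \widetilde{H}^*\bigl(\mathrm{colim}(\mathbf{D}_{(\mathbb{X},\mathbb{A})+})\bigr).
\end{equation*}
Here the projection maps $\widehat{\Pi}_J$ are obtained from the natural projections $Q_+\times\prod_j X_j\to Q_+\wedge \bigwedge_{j\in J} X_j$ restricted to the colimit, exactly as in Section~\ref{Sec:Cohomology} but with $X_j$ in place of $D^2_{(j)}$. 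The identification $H^*((\mathbb{X},\mathbb{A})^Q)\cong \widetilde{H}^*((\mathbb{X},\mathbb{A})^Q_+)$ uses~\eqref{Equ:D-A-Extra} together with the fact that $(\mathbb{X},\mathbb{A})^Q_+$ is $(\mathbb{X},\mathbb{A})^Q$ with a disjoint base-point added.

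Next I would promote this additive isomorphism to a ring isomorphism. Define $\circledast$ on the right-hand side by~\eqref{Equ:Product-General-X-A} using the partial diagonal maps $\widehat{\Delta}^{J,J'}_{J\cup J',Q_+}$ of Section~\ref{Sec:Cohomology}, which are constructed from the reduced diagonals of $Q_+$ and of each $X_j$ in the $j$--slot where $j\in J\cap J'$. The commutative diagram~\eqref{Equ:Diagram-Z-Q-X-A}, obtained by restricting the partial-diagonal diagram for $Q_+\times \prod_j X_j$ to $\mathrm{colim}(\mathbf{D}_{(\mathbb{X},\mathbb{A})+})$, then yields the identity
\begin{equation*}
  \widehat{\Pi}^*_{J\cup J'}(u\circledast v)\;=\;\widehat{\Pi}^*_{J}(u)\cup \widehat{\Pi}^*_{J'}(v)
\end{equation*}
for every $u\in \widetilde{H}^*(\mathrm{colim}(\widehat{\mathbf{D}}^J_{(\mathbb{X},\mathbb{A})+}))$ and every $v\in \widetilde{H}^*(\mathrm{colim}(\widehat{\mathbf{D}}^{J'}_{(\mathbb{X},\mathbb{A})+}))$. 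Since the ``diagonal part'' of $u\cup v$ in $\widetilde{H}^*(\mathrm{colim}(\mathbf{D}_{(\mathbb{X},\mathbb{A})+}))$ lands in the $J\cup J'$--summand under the wedge splitting (as it is pulled back from the image of $\widehat{\Pi}_{J\cup J'}$), this shows that $\bigoplus_J \widehat{\Pi}^*_J$ transports $\circledast$ to the cup product, hence is a ring isomorphism.

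The only point that needs any care is that, unlike for $(D^2,S^1)$, the reduced diagonals $\Delta_{X_j}\colon X_j\to X_j\wedge X_j$ need not be null-homotopic, so $\circledast$ will be nontrivial even for $J\cap J'\ne\varnothing$; this is the reason the formula in~\eqref{Equ:Product-General-X-A} is stated without a case split. I expect the main obstacle to be purely bookkeeping: verifying that the partial-diagonal maps on the ambient product $Q_+\times\prod_j X_j$ really do restrict to maps between the relevant colimits (i.e.\ send $\mathrm{colim}(\mathbf{D}_{(\mathbb{X},\mathbb{A})+})$ into the correct $\mathrm{colim}(\widehat{\mathbf{D}}^J_{(\mathbb{X},\mathbb{A})+})\wedge \mathrm{colim}(\widehat{\mathbf{D}}^{J'}_{(\mathbb{X},\mathbb{A})+})$), and that the common base-point $\widehat{q}^{(\mathbb{X},\mathbb{A})}_0$ is respected. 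The former is checked factor by factor using~\eqref{Equ:X-A-sigma-I-L} (the smash of $X_j$ with itself occurs only in slots $j\in J\cap J'$, where both $J$ and $J'$ require $X_j$, not $A_j$, so the strata conditions are compatible), and the latter is immediate from how we added the disjoint base-point. Once these checks are in place, the proof concludes exactly as in the proof of Theorem~\ref{Thm:Cohomology-Ring-Isom}, with no new ingredients required.
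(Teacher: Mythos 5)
Your proposal is correct and follows essentially the same route as the paper, which itself only remarks that the proof is "parallel to Theorem~\ref{Thm:Cohomology-Ring-Isom}": the additive isomorphism comes from the stable decomposition of Theorem~\ref{Thm:Stable-Decomp-X-A}, and the commutative diagram~\eqref{Equ:Diagram-Z-Q-X-A} gives $\widehat{\Pi}^*_{J\cup J'}(u\circledast v)=\widehat{\Pi}^*_{J}(u)\cup\widehat{\Pi}^*_{J'}(v)$, so the additive isomorphism is multiplicative. Your observation that no case split occurs here because the reduced diagonals $\Delta_{X_j}$ need not be null-homotopic is exactly the right point of contrast with the $(D^2,S^1)$ case.
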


      In the following two subsections, we will study the 
      stable decomposition and cohomology ring
      of $(\mathbb{X},\mathbb{A})^Q$ under some special 
      conditions on $(\mathbb{X},\mathbb{A})$.

      \subsection{The case of $(\mathbb{X},\mathbb{A})^Q$ with each $X_j$ contractible}
      \ \vskip .1cm
   
   Observe that in the proof of Theorem~\ref{Thm:Stable-Decomp-Main}, the only properties of $(D^2,S^1)$ that we actually use are that
    \begin{itemize}
    \item[(i)] $D^2$ is contractible;
    
    \item[(ii)] $X\wedge S^1$ is homeomorphic to $\mathbf{\Sigma}(X)$ for any based CW-complex $X$.
    \end{itemize}
    
    So if we assume that every $X_j$ in $(\mathbb{X},\mathbb{A})$ is contractible, we can obtain the following theorem parallel to Theorem~\ref{Thm:Stable-Decomp-Main}.

 \begin{thm} \label{Thm:Main-General-X-A}
  Let $Q$ be a nice manifold with corners with facets $F_1,\cdots, F_m$.
  Let $(\mathbb{X},\mathbb{A})= \{ (X_j,A_j,a_j) \}^m_{j=1}$ where each $X_j$ is contractible and each $A_j$ is either connected or is a disjoint union of a connected CW-complex with its basepoint. Then there is a homotopy equivalence
      \begin{equation*}  
         \mathbf{\Sigma}\big(  (\mathbb{X},\mathbb{A})^{Q} \big) \simeq \bigvee_{J\subseteq [m]} 
           \mathbf{\Sigma} \Big( Q\slash F_J \wedge\bigwedge_{j\in J} A_j \Big) .
           \end{equation*} 
 So the reduced cohomology group
  $$ \widetilde{H}^*\big(  (\mathbb{X},\mathbb{A})^{Q} \big) \cong \bigoplus_{J\subseteq [m]} \widetilde{H}^*\big(Q\slash F_J \wedge\bigwedge_{j\in J} A_j \big).$$    
    \end{thm}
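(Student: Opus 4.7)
The plan is to mimic almost verbatim the proof of Theorem~\ref{Thm:Stable-Decomp-Main}, replacing each $D^2_{(j)}$ by $X_j$ and each $S^1_{(j)}$ by $A_j$, and exploiting the only property of $D^2$ that was actually used there, namely its contractibility (together with Lemma~\ref{Lem:Smash-Contract}).

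I would start from the stable decomposition already supplied by Theorem~\ref{Thm:Stable-Decomp-X-A}, which gives
\begin{equation*}
 \mathbf{\Sigma}\bigl((\mathbb{X},\mathbb{A})^{Q}_+\bigr) \simeq \bigvee_{J\subseteq [m]} \mathbf{\Sigma}\bigl(\mathrm{colim}\bigl(\widehat{\mathbf{D}}^J_{(\mathbb{X},\mathbb{A})+}\bigr)\bigr).
\end{equation*}
The task is therefore to identify each $\mathrm{colim}\bigl(\widehat{\mathbf{D}}^J_{(\mathbb{X},\mathbb{A})+}\bigr)$ up to homotopy. For fixed $J\subseteq[m]$ I would introduce a ``collapsed'' diagram $\widehat{\mathbf{E}}^J_{(\mathbb{X},\mathbb{A})+}:\mathcal{P}_Q\to CW_*$ defined by
\[
 \widehat{\mathbf{E}}^J_{(\mathbb{X},\mathbb{A})+}((f,L)) :=
 \begin{cases}
   f_+ \wedge \bigwedge_{j\in J} A_j, & \text{if } J\cap(I_f\setminus L)=\varnothing;\\
   [\widehat{q}_0^{J,A}], & \text{if } J\cap(I_f\setminus L)\neq\varnothing,
 \end{cases}
\]
with connecting maps being either inclusions or constants, completely in parallel with~\eqref{Equ:F-Hat-J}.

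The key step is to observe that because every $X_j$ is contractible, Lemma~\ref{Lem:Smash-Contract} iterated across the smash factors shows that $\widehat{\mathbf{D}}^J_{(\mathbb{X},\mathbb{A})+}((f,L))$ is contractible whenever $J\cap(I_f\setminus L)\neq\varnothing$, while it coincides with $\widehat{\mathbf{E}}^J_{(\mathbb{X},\mathbb{A})+}((f,L))$ otherwise. Hence the natural map of diagrams $\Phi^J : \widehat{\mathbf{D}}^J_{(\mathbb{X},\mathbb{A})+} \to \widehat{\mathbf{E}}^J_{(\mathbb{X},\mathbb{A})+}$ (identity in the first case, constant to the base-point in the second) is a level-wise based homotopy equivalence, and Theorem~\ref{Thm:Homotopy-Colimit-Lemma} then yields
\[
 \mathrm{colim}\bigl(\widehat{\mathbf{D}}^J_{(\mathbb{X},\mathbb{A})+}\bigr) \simeq \mathrm{colim}\bigl(\widehat{\mathbf{E}}^J_{(\mathbb{X},\mathbb{A})+}\bigr).
\]

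At this point the identification~\eqref{Equ:I-J-1} and~\eqref{Equ:I-J-2} from the proof of Theorem~\ref{Thm:Stable-Decomp-Main} transfers word for word: the union of faces $f$ with $J\cap I_f\neq\varnothing$ is $F_J$, the union of the remaining faces together with $Q^\circ$ is $Q^\circ\cup F_{[m]\setminus J}$, and the overlap is $F_{[m]\setminus J}\cap F_J$. Because the factor $\bigwedge_{j\in J} A_j$ is the same in every non-collapsed level, it factors out of the colimit, giving for $J\neq\varnothing$
\[
 \mathrm{colim}\bigl(\widehat{\mathbf{E}}^J_{(\mathbb{X},\mathbb{A})+}\bigr) \cong (Q/F_J) \wedge \bigwedge_{j\in J} A_j,
\]
and $\mathrm{colim}\bigl(\widehat{\mathbf{E}}^\varnothing_{(\mathbb{X},\mathbb{A})+}\bigr)\cong Q_+$. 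Plugging this into the wedge decomposition and peeling off the $S^1$ summand via $\mathbf{\Sigma}(Q_+)\simeq S^1\vee\mathbf{\Sigma}(Q)$ and $\mathbf{\Sigma}((\mathbb{X},\mathbb{A})^Q_+)\simeq S^1\vee\mathbf{\Sigma}((\mathbb{X},\mathbb{A})^Q)$ yields the stated formula, and the cohomology statement follows by taking $\widetilde{H}^*$ of a suspension.

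The only place requiring any care is checking that the cofibration hypothesis in Theorem~\ref{Thm:Homotopy-Colimit-Lemma} is satisfied by both $\widehat{\mathbf{D}}^J_{(\mathbb{X},\mathbb{A})+}$ and $\widehat{\mathbf{E}}^J_{(\mathbb{X},\mathbb{A})+}$; this reduces to the fact that, under the convention that each face of $Q$ is a subcomplex and each $X_j,A_j$ is a based CW-complex, the inclusions used to assemble the colimits are inclusions of subcomplexes, hence closed cofibrations. That is the main book-keeping obstacle; once it is in place the argument runs on autopilot in parallel with Section~\ref{Sec:Stable-Decomp}.
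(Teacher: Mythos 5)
Your proposal is correct and follows essentially the same route as the paper: the paper's proof simply invokes Theorem~\ref{Thm:Stable-Decomp-X-A} and notes that the argument of Theorem~\ref{Thm:Stable-Decomp-Main} extends verbatim once one observes that the only property of $D^2$ used there is its contractibility, which is exactly the collapsed-diagram argument you spell out. Your additional remarks on the cofibration hypothesis and on factoring $\bigwedge_{j\in J}A_j$ out of the colimit are just the details the paper leaves implicit.
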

    \begin{proof}
     We can easily extend the argument in the proof of Theorem~\ref{Thm:Stable-Decomp-Main} to show 
     \[  \mathrm{colim}\big(\widehat{\mathbf{D}}^{J}_{(\mathbb{X},\mathbb{A})+} \big) \simeq 
      \begin{cases}
    Q\slash F_J \wedge \underset{j\in J}{\scalebox{1.3}{$\bigwedge$}} A_j ,  &  \text{if $J\neq \varnothing$}; \\
    Q_+ ,  &  \text{if $J=\varnothing$}.
 \end{cases}  \]
 
 Then the statements of the theorem follow from Theorem~\ref{Thm:Stable-Decomp-X-A} and the fact that $\mathbf{\Sigma}(Q_+)\simeq
 S^1\vee \mathbf{\Sigma}(Q)$.
    \end{proof}
  
   Moreover, we have the following theorem which is parallel to~\cite[Theorem 1.4]{BBCG12}.
    
      \begin{thm} \label{Equ:Cohomology-Ring-Iso-X-A}
     Under the condition in Theorem~\ref{Thm:Main-General-X-A}, there is a ring isomorphism
      \[  
        \Big( \bigoplus_{J\subseteq [m]} \widetilde{H}^*\big(Q\slash F_J \wedge\bigwedge_{j\in J} A_j \big),\circledast \Big) \longrightarrow
         \widetilde{H}^*\big((\mathbb{X},\mathbb{A})^{Q} \big) \ 
         \text{induced by}\ \bigoplus_{J\subseteq [m]} \widehat{\Pi}^*_{J}. \]           
      \end{thm}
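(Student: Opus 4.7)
The plan is to derive Theorem~\ref{Equ:Cohomology-Ring-Iso-X-A} by combining the general ring isomorphism of Theorem~\ref{Thm:Cohomology-Ring-Isom-X-A} with the explicit identification of the diagrams $\widehat{\mathbf{D}}^J_{(\mathbb{X},\mathbb{A})+}$ carried out in the proof of Theorem~\ref{Thm:Main-General-X-A}. Theorem~\ref{Thm:Cohomology-Ring-Isom-X-A} already furnishes a ring isomorphism $\bigoplus_J \widehat{\Pi}^*_J$ from $\bigoplus_J \widetilde{H}^*(\mathrm{colim}(\widehat{\mathbf{D}}^J_{(\mathbb{X},\mathbb{A})+}))$ onto $\widetilde{H}^*((\mathbb{X},\mathbb{A})^Q)$, where the domain is equipped with the $\circledast$-product of~\eqref{Equ:Product-General-X-A} induced by the partial diagonals $\widehat{\Delta}^{J,J'}_{J\cup J',Q_+}$. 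What remains is to transport this product across the homotopy equivalences
\[ \mathrm{colim}(\widehat{\mathbf{D}}^J_{(\mathbb{X},\mathbb{A})+}) \simeq Q\slash F_J \wedge \bigwedge_{j\in J} A_j \qquad (J\neq\varnothing), \]
together with $\mathrm{colim}(\widehat{\mathbf{D}}^\varnothing_{(\mathbb{X},\mathbb{A})+}) \simeq Q_+$, so that the partial-diagonal formula for $\circledast$ becomes a formula on the model spaces appearing in the statement.

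First I would reproduce, in the current setting, the colimit collapse argument that produced~\eqref{Equ:F-Hat-J} in the proof of Theorem~\ref{Thm:Stable-Decomp-Main}. Since each $X_j$ is contractible, Lemma~\ref{Lem:Smash-Contract} forces $\widehat{\mathbf{D}}^J_{(\mathbb{X},\mathbb{A})+}((f,L))$ to be contractible whenever $J\cap(I_f\setminus L)\neq\varnothing$. Replacing these terms by the base-point $\big[q_0\times\prod_{j\in J} a_j\big]$ yields an auxiliary diagram $\widehat{\mathbf{E}}^J_{(\mathbb{X},\mathbb{A})+}$ together with a map of diagrams $\Phi^J: \widehat{\mathbf{D}}^J_{(\mathbb{X},\mathbb{A})+} \to \widehat{\mathbf{E}}^J_{(\mathbb{X},\mathbb{A})+}$ that is a pointwise homotopy equivalence, and Theorem~\ref{Thm:Homotopy-Colimit-Lemma} promotes this to a homotopy equivalence on colimits. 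The combinatorial computation using~\eqref{Equ:I-J-1}--\eqref{Equ:I-J-2} then carries over verbatim and identifies the limiting quotient with $Q\slash F_J \wedge \bigwedge_{j\in J} A_j$ for $J\neq\varnothing$ and with $Q_+$ for $J=\varnothing$.

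Next I would transport the $\circledast$-product through these identifications. Because the partial diagonal $\widehat{\Delta}^{J,J'}_{J\cup J',Q_+}$ is natural in the underlying based CW pair and $\Phi^J$ is a map in $CW_*$, the partial diagonal descends along the collapse to a well-defined based map between the identified smash targets, which induces a product on $\bigoplus_J \widetilde{H}^*(Q\slash F_J\wedge\bigwedge_{j\in J} A_j)$ that we again denote by $\circledast$. Composing the cohomology isomorphism induced by $\Phi^J$ (on each $J$-summand) with the ring isomorphism of Theorem~\ref{Thm:Cohomology-Ring-Isom-X-A} then produces the ring isomorphism claimed in the theorem, with the composite still induced by $\bigoplus_J \widehat{\Pi}^*_J$ on cohomology.

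The main obstacle I anticipate is checking that the naturality invoked in the transport step is genuinely compatible with the ad~hoc collapse maps defining $\Phi^J$. Fortunately, on each contractible summand that gets collapsed, the reduced cross product vanishes for trivial reasons, so the transported diagonal agrees with the original one up to homotopy; the residual task is to verify that the shuffle $\Theta^{J,J'}_{J\cup J'}$ of~\eqref{Equ:Shuffle} respects the base-point identifications passing to $Q\slash F_J$, which is immediate from the constructions involved. Once these compatibilities are in place, the remaining argument is a diagram chase strictly parallel to the final part of the proof of Theorem~\ref{Thm:Cohomology-Ring-Isom}.
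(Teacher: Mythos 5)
Your proposal is correct and follows essentially the same route as the paper, which states this theorem without a separate proof precisely because it is obtained by combining the ring isomorphism of Theorem~\ref{Thm:Cohomology-Ring-Isom-X-A} with the identification $\mathrm{colim}\big(\widehat{\mathbf{D}}^{J}_{(\mathbb{X},\mathbb{A})+}\big)\simeq Q\slash F_J\wedge\bigwedge_{j\in J}A_j$ established in the proof of Theorem~\ref{Thm:Main-General-X-A}, and then transporting $\circledast$ across these equivalences exactly as in the proof of Theorem~\ref{Thm:Cohomology-Ring-Isom}. Your extra care about the collapse maps $\Phi^J$ is sound but slightly more than needed: since each $\Phi^J$ induces an isomorphism on reduced cohomology, one may simply define $\circledast$ on the model spaces by transporting the product along these isomorphisms, with no need for the partial diagonal to descend to an actual map of spaces.
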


  \begin{rem}
  If any combination of $Q\slash F_{J}$ and $A_j$'s satisfies the
    \emph{strong smash form of the K\"unneth formula} as defined in~\cite[p.\,1647]{BBCG10} over a coefficient ring $\mathbf{k}$, i.e. the natural map
    $$ \widetilde{H}^*\big(Q\slash F_J;\mathbf{k}\big) \otimes \bigotimes_{j\in I } \widetilde{H}^*(A_j;\mathbf{k})  \longrightarrow \widetilde{H}^*\Big(Q\slash F_J\wedge\bigwedge_{j\in I} 
    A_j;\mathbf{k} \Big) $$
    is an isomorphism for any $I, J\subseteq [m]$,    
    we can write the cohomology ring structure of
    $(\mathbb{X},\mathbb{A})^Q$ with $\mathbf{k}$-coefficients more explicitly via 
    the formula in 
    Lemma~\ref{Lem:Cross-Product-Pull-Back}.
   \end{rem}

   In the following, we demonstrate the product $\circledast$ for $(\mathbb{D},\mathbb{S})^{Q}$
   where
      $$(\mathbb{D},\mathbb{S}) =
    \big\{ \big( D^{n_j+1}, S^{n_j}, a_j \big) \big\}^m_{j=1}.$$
     Here $D^{n+1}$ is the unit ball
      in $\R^{n+1}$ and $S^n=\partial D^{n+1}$. 
      
      In particular, if 
      $(\mathbb{D},\mathbb{S}) =
    \big\{ \big( D^{n_j+1}, S^{n_j}, a_j \big) =\big( D^{n+1},S^n,a_0\big) \big\}^m_{j=1}$, we also write
    $$(\mathbb{D},\mathbb{S})^{Q} =
    (D^{n+1},S^n)^{Q}.$$
      
  \begin{exam}
      $\mathcal{Z}_Q 
      \cong (D^2,S^1)^Q$ and 
      $\R\mathcal{Z}_Q 
      \cong (D^1,S^0)^Q$ (see Remark~\ref{Rem:Real-Moment-Angle}).
  \end{exam}  
  
   We define a graded ring structure $\Cup^{(\mathbb{D},\mathbb{S})}$ on $\mathcal{R}^*_Q $ according to
    $(\mathbb{D},\mathbb{S})$ as follows.
  \begin{itemize}
   \item If $J\cap J'=\varnothing$ or $J\cap J' \neq \varnothing$ but $n_j=0$ for all $j\in J\cap J'$, then\\   
  $ H^*(Q,F_J) \otimes H^{*}(Q,F_{J'}) \xlongrightarrow{\Cup^{(\mathbb{D},\mathbb{S})}}
    H^{*}(Q,F_{J\cup J'})$ is the relative cup product $\cup$. \vskip .1cm
          
     \item If $J\cap J' \neq \varnothing$ and there exists $n_j\geq 1$ for some $j\in J\cap J'$, then
    \\   
  $ H^*(Q,F_J) \otimes H^{*}(Q,F_{J'}) \xlongrightarrow{\Cup^{(\mathbb{D},\mathbb{S})}}
    H^{*}(Q,F_{J\cup J'})$ is trivial.
    \end{itemize}
      
      By Lemma~\ref{Lem:Induced-Cup-Prod}, the product $\Cup^{(\mathbb{D},\mathbb{S})}$ on $\mathcal{R}^*_Q $
      induces a product $\widetilde{\Cup}^{(\mathbb{D},\mathbb{S})}$ on $\widetilde{\mathcal{R}}^*_Q $.
            
             \begin{itemize}
   \item If $J\cap J'=\varnothing$ or $J\cap J' \neq \varnothing$ but $n_j=0$ for all $j\in J\cap J'$, then\\   
  $ \widetilde{H}^*(Q\slash F_J) \otimes \widetilde{H}^{*}(Q\slash F_{J'}) \xlongrightarrow{\widetilde{\Cup}^{(\mathbb{D},\mathbb{S})}}
    \widetilde{H}^{*}(Q\slash F_{J\cup J'})$ is the product $\widetilde{\cup}$
    induced from the relative cup product 
    $ H^*(Q,F_J) \otimes H^{*}(Q,F_{J'}) \xlongrightarrow{\cup }
    H^{*}(Q,F_{J\cup J'})$. \vskip .1cm
          
     \item If $J\cap J' \neq \varnothing$ and there exists $n_j\geq 1$ for some $j\in J\cap J'$, then
    \\   
  $ \widetilde{H}^*(Q\slash F_J) \otimes \widetilde{H}^{*}(Q\slash F_{J'}) \xlongrightarrow{\widetilde{\Cup}^{(\mathbb{D},\mathbb{S})}}
    \widetilde{H}^{*}(Q\slash F_{J\cup J'})$ is trivial.
    \end{itemize}

 We have the following theorem which generalizes Theorem~\ref{Thm:Stable-Decomp-Main} and Theorem~\ref{Thm:Cohomology-Ring-Isom}.

     \begin{thm} \label{Thm:Main-Dn-Sn-1}
  Let $Q$ be a nice manifold with corners with facets $F_1,\cdots, F_m$.
 Then for any $(\mathbb{D},\mathbb{S}) =
    \big\{ \big( D^{n_j+1}, S^{n_j} , a_j \big) \big\}^m_{j=1}$,
      \begin{itemize}  
      \item[(a)] There is a homotopy equivalence 
      $$\qquad \mathbf{\Sigma}\big( (\mathbb{D},\mathbb{S})^{Q} \big) \simeq \bigvee_{J\subseteq [m]} 
           \mathbf{\Sigma}\Big( Q\slash F_J \wedge
           \bigwedge_{j\in  J} S^{n_j} \Big) \cong \bigvee_{J\subseteq [m]} 
           \mathbf{\Sigma}^{1+\sum_{j\in J} n_j}\big( Q\slash F_J \big).$$     
     $$  \text{This implies}\  H^p \big( (\mathbb{D},\mathbb{S})^{Q} \big)\cong \bigoplus_{J \subseteq [m]} H^{p-\sum_{j\in J} n_j}(Q,F_J), \, \forall p\in \Z. \qquad\qquad\ $$ 
     
   \item[(b)] There is a ring isomorphism (up to a sign) from
   $(\mathcal{R}^*_Q,\Cup^{(\mathbb{D},\mathbb{S})})$ to the integral cohomology ring of $(\mathbb{D},\mathbb{S})^{Q}$. Moreover, we can make this ring isomorphism degree-preserving by shifting the degrees of the elements in $H^*(Q,F_J)$ for every $J\subseteq [m]$.
   \end{itemize}
      \end{thm}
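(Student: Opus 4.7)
The plan is to obtain Theorem~\ref{Thm:Main-Dn-Sn-1} by specializing the two general results on polyhedral products proved in this section, namely Theorem~\ref{Thm:Main-General-X-A} (stable decomposition) and Theorem~\ref{Equ:Cohomology-Ring-Iso-X-A} (cohomology ring via partial diagonals), to the family $(X_j, A_j) = (D^{n_j+1}, S^{n_j})$. First I would verify the hypotheses: each $D^{n_j+1}$ is contractible, and each $S^{n_j}$ is either connected (when $n_j \geq 1$) or, in the case $S^0$, a disjoint union of a connected CW-complex (a point) with its base-point. So the assumptions of both theorems are satisfied.

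For part (a), Theorem~\ref{Thm:Main-General-X-A} applied to $(\mathbb{D}, \mathbb{S})$ yields directly
\[
\mathbf{\Sigma}\big((\mathbb{D},\mathbb{S})^{Q}\big) \simeq \bigvee_{J \subseteq [m]} \mathbf{\Sigma}\Big(Q\slash F_J \wedge \bigwedge_{j \in J} S^{n_j}\Big).
\]
Then using $S^a \wedge S^b \cong S^{a+b}$ iteratively and $X \wedge S^k \cong \mathbf{\Sigma}^k X$, I rewrite each wedge summand as $\mathbf{\Sigma}^{1 + \sum_{j \in J} n_j}(Q\slash F_J)$. The cohomology statement follows from the suspension isomorphism combined with the identification $\widetilde{H}^*(Q\slash F_J) \cong H^*(Q, F_J)$ for $J \neq \varnothing$, together with the convention $\widetilde{H}^*(Q) \oplus \Z = H^*(Q)$ for the $J = \varnothing$ summand.

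For part (b), Theorem~\ref{Equ:Cohomology-Ring-Iso-X-A} provides a ring isomorphism
\[
\bigoplus_{J \subseteq [m]} \widehat{\Pi}^*_J : \Big(\bigoplus_{J \subseteq [m]} \widetilde{H}^*\big(Q\slash F_J \wedge \bigwedge_{j \in J} S^{n_j}\big), \circledast\Big) \longrightarrow \widetilde{H}^*\big((\mathbb{D},\mathbb{S})^{Q}\big),
\]
where $\circledast$ is induced by the partial diagonals $\widehat{\Delta}^{J,J'}_{J \cup J', Q_+}$. The substance of (b) is identifying this $\circledast$ with the product induced by $\Cup^{(\mathbb{D},\mathbb{S})}$. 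I would proceed by cases, paralleling the argument in the proof of Theorem~\ref{Thm:Cohomology-Ring-Isom}. If $J \cap J' \neq \varnothing$ and $n_j \geq 1$ for some $j \in J \cap J'$, then the factor $S^{n_j}$ is a nontrivial suspension, so by Lemma~\ref{Lem:Reduced-Diagonal-Null} the partial diagonal is null-homotopic and $\circledast$ is trivial, matching the definition of $\Cup^{(\mathbb{D},\mathbb{S})}$. In the remaining case (either $J \cap J' = \varnothing$, or $J \cap J' \neq \varnothing$ but $n_j = 0$ for every $j \in J \cap J'$), the reduced diagonal $S^0 \to S^0 \wedge S^0 = S^0$ is the identity, so the sphere factors contribute trivially to the partial diagonal. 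Combining Lemma~\ref{Lem:Cross-Product-Pull-Back} with Lemma~\ref{Lem:Induced-Cup-Prod} then shows that $\circledast$ is exactly the product induced by the relative cup product on $H^*(Q, F_J) \otimes H^*(Q, F_{J'}) \to H^*(Q, F_{J \cup J'})$, again matching $\Cup^{(\mathbb{D},\mathbb{S})}$.

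The main obstacle will be the bookkeeping in the second case, where one must promote the generators $\iota^{n_j} \in \widetilde{H}^{n_j}(S^{n_j})$ through the reduced cross product to convert between $\widetilde{H}^*(Q\slash F_J)$ and $\widetilde{H}^*(Q\slash F_J \wedge \bigwedge_{j \in J} S^{n_j})$, and verify commutativity of the analogue of diagram~\eqref{Equ:Diagram-Commute}. This gives the ring isomorphism, which can then be made degree-preserving by shifting the degrees of the elements in $H^*(Q, F_J)$ up by $\sum_{j \in J} n_j$ (rather than the $|J|$ appearing in Theorem~\ref{Thm:Cohomology-Ring-Isom}) for every $J \subseteq [m]$.
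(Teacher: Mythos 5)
Your proposal is correct and follows essentially the same route as the paper: part (a) is obtained by specializing Theorem~\ref{Thm:Main-General-X-A} and collapsing the sphere smash factors, and part (b) identifies $\circledast$ with $\Cup^{(\mathbb{D},\mathbb{S})}$ by the same case split (null-homotopic partial diagonal when some $n_j\geq 1$ lies in $J\cap J'$, versus Lemmas~\ref{Lem:Cross-Product-Pull-Back} and~\ref{Lem:Induced-Cup-Prod} otherwise, where the paper first strips the $S^0$ factors via $X\wedge S^0\cong X$ while you instead note that $\Delta_{S^0}$ is the identity --- the same computation). Your degree shift by $\sum_{j\in J}n_j$ agrees with the paper's shift by $N_{J\backslash J_0}$, since $n_j=0$ for $j\in J_0$.
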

 \begin{proof}  For brevity, we use the following notation in our proof.
    $$N_J = \sum_{j\in J} n_j, \ J\subseteq [m].$$ 
    
   Statement (a) follows from Theorem~\ref{Thm:Main-General-X-A} and the simple fact that:
    $$ Q\slash F_J   \wedge  \bigwedge_{j\in  J} S^{n_j} \cong Q\slash F_J   \wedge  S^{N_J} \cong \mathbf{\Sigma}^{N_J}(Q\slash F_J).$$

   For statement (b), note that by Theorem~\ref{Equ:Cohomology-Ring-Iso-X-A} we have a ring isomorphism
   \begin{equation} \label{Equ:Cohomology-Ring-Isomor-Dn-Sn-1}
    \bigoplus_{J\subseteq [m]} \widehat{\Pi}^*_{J}:
         \Big( \bigoplus_{J\subseteq [m]}\widetilde{H}^*\big(Q\slash F_J   \wedge  \bigwedge_{j\in  J} S^{n_j} \big), \circledast \Big) \longrightarrow \widetilde{H}^*\big((\mathbb{D},\mathbb{S})^{Q}\big).
     \end{equation}     
     
    \quad \ \ \, For any $1\leq j \leq m$, let $\iota^{n_j}$ denote a generator of $\widetilde{H}^{n_j}(S^{n_j})$. Let 
 $$ \iota^J_{(\mathbb{D},\mathbb{S})} =\underset{j\in J}{\largetimes} \iota^{n_j} \in \widetilde{H}^{N_J}\Big(\bigwedge_{j\in  J} S^{n_j})\Big) \ \text{be a generator.}$$
 
   \begin{itemize}
      \item[(i)] Assume $J\cap J'\neq \varnothing$ and there exists $n_j\geq 1$ for some $j\in J\cap J'$. Then since $S^{n_j}$ is a suspension space, the map $\widehat{\Delta}^{J,J'}_{J\cup J',Q_+}$ in~\eqref{Equ:Diagram-Z-Q-X-A} is null-homotopic. This implies that the product $\circledast$ in~\eqref{Equ:Cohomology-Ring-Isomor-Dn-Sn-1} is trivial 
      which corresponds to the definition of $\widetilde{\Cup}^{(\mathbb{D},\mathbb{S})}$
      on $\widetilde{\mathcal{R}}^*_Q$ in this case.
      
       \item[(ii)] Assume $J\cap J' \neq \varnothing$ but $n_j=0$ for all $j\in J\cap J'$.  Let 
       $$J_0 =\{ j \in [m]\,|\, n_j=0 \} \subseteq [m].$$        
       So the condition on $J$ and $J'$ is equivalent to $J\cap J'\subseteq J_0$ which implies
     \begin{equation} \label{Equ:Disjoint-Union-J}
      (J\backslash J_0)\cap (J'\backslash J_0)=\varnothing.
     \end{equation}

      Since $X\wedge S^0 \cong X$ for any based space $X$, we have for any $J\subseteq [m]$:
    $$\qquad  Q\slash F_J   \wedge  \bigwedge_{j\in  J} S^{n_j} \cong Q\slash F_J   \wedge  \bigwedge_{j\in  J\backslash J_0} S^{n_j} \cong \mathbf{\Sigma}^{N_{J\backslash J_0}}(Q\slash F_J). $$
    
 By Lemma~\ref{Lem:Cross-Product-Pull-Back} and Lemma~\ref{Lem:Induced-Cup-Prod}, we can derive an explicit formula for the product
  $\circledast$ in~\eqref{Equ:Cohomology-Ring-Isomor-Dn-Sn-1} as follows. 
  For any elements
        \begin{align*}
    u &= \phi\times \iota^{J\backslash J_0}_{(\mathbb{D},\mathbb{S})} \in \widetilde{H}^*\Big(Q\slash F_J   \wedge \bigwedge_{j\in  J\backslash J_0} S^{n_j} \Big)=
     \widetilde{H}^*\big(\mathbf{\Sigma}^{N_{J\backslash J_0}}(Q\slash F_J)\big),\\
     v &= \phi'\times \iota^{J'\backslash J_0}_{(\mathbb{D},\mathbb{S})} \in \widetilde{H}^*\Big(Q\slash F_{J'}   \wedge \bigwedge_{j\in  J'\backslash J_0} S^{n_j}  \Big)=\widetilde{H}^*\big(\mathbf{\Sigma}^{N_{J'\backslash J_0}}(Q\slash F_{J’})\big),\\
   \qquad   u \circledast v  &=  \big(  \widehat{\Delta}^{J\backslash J_0,J'\backslash J_0}_{(J\cup J')\backslash J_0,Q_+} \big)^*
  \Big( \big( \phi\times \iota^{J\backslash J_0}_{(\mathbb{D},\mathbb{S})} \big) \times \big(\phi'\times \iota^{J'\backslash J_0}_{(\mathbb{D},\mathbb{S})} \big) \Big) \overset{\eqref{Equ:Disjoint-Union-J}}{=} (-1)^{N_{J\backslash J_0}|\phi'|} (\phi\,\widetilde{\cup}\, \phi') \times 
 \iota^{(J\cup J')\backslash J_0}_{(\mathbb{D},\mathbb{S})}.  
    \end{align*}   
  So we have a commutative diagram parallel to diagram~\eqref{Equ:Diagram-Commute} below      
   \begin{equation*} 
  \qquad \ \xymatrix{
             \widetilde{H}^*(Q\slash F_J) \otimes \widetilde{H}^*(Q\slash F_{J'}) \ar[d]_{\scalebox{0.8}{$\times \iota^{J\backslash J_0}_{(\mathbb{D},\mathbb{S})}  \otimes \times \iota^{J'\backslash J_0}_{(\mathbb{D},\mathbb{S})} $} }  \ar[r]^{\qquad\ \ \ \ \scalebox{0.85}{$\widetilde{\Cup}^{(\mathbb{D},\mathbb{S})}$}} & \widetilde{H}^*(Q\slash F_{J\cup J'}) \ar[d]_{\scalebox{0.8}{$\times \iota^{(J\cup J')\backslash J_0}_{(\mathbb{D},\mathbb{S})} $}} \\
           \widetilde{H}^*\big(\mathbf{\Sigma}^{N_{J\backslash J_0}}(Q\slash F_J)\big) \otimes  \widetilde{H}^*\big(\mathbf{\Sigma}^{N_{J'\backslash J_0}}(Q\slash F_{J’})\big) \ar[r]^{ \qquad \quad\ \ \scalebox{0.85}{$\circledast$} } & \widetilde{H}^*\big(\mathbf{\Sigma}^{N_{(J\cup J')\backslash J_0}} (Q\slash F_{J\cup J'})\big).
                 }  
  \end{equation*}  
  This implies that the product $\widetilde{\Cup}^{(\mathbb{D},\mathbb{S})}$ on $\widetilde{\mathcal{R}}^*_Q$ corresponds to the product $\circledast$ in~\eqref{Equ:Cohomology-Ring-Isomor-Dn-Sn-1}
    in this case.

      \item[(iii)] When $J\cap J'=\varnothing$, the proof
      of the correspondence between
      the product $\widetilde{\Cup}^{(\mathbb{D},\mathbb{S})}$ on $\widetilde{\mathcal{R}}^*_Q$ and the product $\circledast$ in~\eqref{Equ:Cohomology-Ring-Isomor-Dn-Sn-1} is the same as case (ii). 
 \end{itemize}  
 
  The above discussion implies that there is an isomorphism of rings:
  $$\big(\widetilde{\mathcal{R}}^*_Q, \widetilde{\Cup}^{(\mathbb{D},\mathbb{S})} \big) \longrightarrow
  \Big( \bigoplus_{J\subseteq [m]}\widetilde{H}^*\big(Q\slash F_J   \wedge  \bigwedge_{j\in  J} S^{n_j} \big), \circledast \Big) \xlongrightarrow{\ 
  \underset{J\subseteq [m]}{\bigoplus} \widehat{\Pi}^*_{J}\ }  \widetilde{H}^*\big((\mathbb{D},\mathbb{S})^{Q} \big) .$$
  
 This implies that
   $(\mathcal{R}^*_Q,\Cup^{(\mathbb{D},\mathbb{S})})$ is isomorphic (up to a sign) to the integral cohomology ring $H^*\big((\mathbb{D},\mathbb{S})^{Q} \big)$. Moreover, according to the above diagram, we can make the ring isomorphism between $(\mathcal{R}^*_Q,\Cup^{(\mathbb{D},\mathbb{S})})$ and $H^*\big((\mathbb{D},\mathbb{S})^{Q} \big)$ degree-preserving by
   shifting the degrees of all the elements in 
   $H^*(Q,F_J)$ up by $N_{J\backslash J_0}$ for every $J\subseteq [m]$.
    The theorem is proved.
 \end{proof}
     
   \begin{rem}
    $S^0$ is not a suspension of any space and the reduced diagonal map $\Delta_{S^0} = id_{S^0}: S^0\rightarrow S^0\wedge S^0\cong S^0$ is not null-homotopic. This is the essential reason why for a general $(\mathbb{D},\mathbb{S})$, the cohomology ring of $(\mathbb{D},\mathbb{S})^{Q}$ is more subtle than that of $\mathcal{Z}_Q$.
    \end{rem}
   
      A very special case of Theorem~\ref{Thm:Main-Dn-Sn-1} is $(D^1,S^0)^Q=\R\mathcal{Z}_Q$ where 
         the product $\Cup^{(D^1,S^0)}$ on $\mathcal{R}^*_Q$ is exactly the relative cup product for all $J,J'\subseteq [m]$.

    \begin{cor} \label{Cor:Real-Moment}
     Let $Q$ be a nice manifold with corners with facets $F_1,\cdots, F_m$. Then we have
     $$  \mathbf{\Sigma}( \R\mathcal{Z}_Q ) \simeq \bigvee_{J\subseteq [m]} 
           \mathbf{\Sigma}( Q\slash F_J), \ \ H^p(\R\mathcal{Z}_Q) \cong \bigoplus_{J\subseteq [m]} H^p(Q, F_J), \ \forall p\in \Z.$$
    Moreover, the integral cohomology ring of $\R\mathcal{Z}_Q$ is isomorphic as a graded ring to the ring $(\mathcal{R}^*_Q,\cup)$
     where $\cup$ is the relative cup product
      $$ H^*(Q,F_J) \otimes H^{*}(Q,F_{J'}) \overset{\cup}{\longrightarrow}
    H^{*}(Q,F_{J\cup J'}), \ \forall J,J'\subseteq [m].$$ 
    \end{cor}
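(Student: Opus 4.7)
The plan is to deduce this corollary as a direct specialization of Theorem~\ref{Thm:Main-Dn-Sn-1} applied to the family $(\mathbb{D},\mathbb{S})=\{(D^{1},S^{0},a_j)\}_{j=1}^{m}$, so that $n_j=0$ for every $j\in[m]$. The first step I would carry out is to justify the identification $\R\mathcal{Z}_Q\cong (D^1,S^0)^Q$. This follows by repeating the argument of Lemma~\ref{Lem:Homeo-D2S1-Q-hat-ZQ} with $(S^1)^m$ replaced by $(\Z_2)^m$ and $D^2$ replaced by $D^1=[-1,1]$: one builds the rim-cubicalization $\widehat{Q}$ exactly as in Section~\ref{Subsec:Rim-Cubical}, then notes that the analogous quotient map $[0,1]^m\times(\Z_2)^m\to (D^1)^m$, $(t_j,g_j)\mapsto (g_jt_j)$, restricts on $\widehat{Q}\times(\Z_2)^m$ to give an equivariant homeomorphism with $(D^1,S^0)^Q$.

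Next I would specialize Theorem~\ref{Thm:Main-Dn-Sn-1}(a) with $n_j=0$ for all $j$. Writing $N_J=\sum_{j\in J}n_j=0$, the wedge decomposition collapses to
\[
\mathbf{\Sigma}(\R\mathcal{Z}_Q)\simeq\bigvee_{J\subseteq[m]}\mathbf{\Sigma}^{1+N_J}(Q/F_J)=\bigvee_{J\subseteq[m]}\mathbf{\Sigma}(Q/F_J),
\]
and the cohomology formula becomes $H^{p}(\R\mathcal{Z}_Q)\cong\bigoplus_J H^{p-N_J}(Q,F_J)=\bigoplus_J H^p(Q,F_J)$. Both assertions in the first display of the corollary then follow immediately.

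For the ring structure I would specialize Theorem~\ref{Thm:Main-Dn-Sn-1}(b). The key observation is that since every $n_j=0$, the condition ``$n_j\geq 1$ for some $j\in J\cap J'$'' that forces the product $\Cup^{(\mathbb{D},\mathbb{S})}$ to be trivial is never satisfied. Consequently, for every pair $J,J'\subseteq[m]$ (whether or not $J\cap J'=\varnothing$), the product $\Cup^{(D^1,S^0)}$ reduces by definition to the relative cup product
\[
H^{*}(Q,F_J)\otimes H^{*}(Q,F_{J'})\xlongrightarrow{\cup}H^{*}(Q,F_{J\cup J'}).
\]
Thus $(\mathcal{R}^{*}_Q,\Cup^{(D^{1},S^{0})})=(\mathcal{R}^{*}_Q,\cup)$, and Theorem~\ref{Thm:Main-Dn-Sn-1}(b) supplies the desired graded ring isomorphism onto $H^{*}(\R\mathcal{Z}_Q)$; the degree shifts become trivial because $N_{J\setminus J_0}=0$ for all $J$.

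There is essentially no genuine obstacle here: the one point requiring care is verifying the homeomorphism $\R\mathcal{Z}_Q\cong(D^{1},S^{0})^{Q}$, since this is the bridge that lets us apply the general polyhedral product machinery. Once that identification is in hand, every other claim falls out by setting all $n_j=0$ and reading off the resulting formulas from Theorem~\ref{Thm:Main-Dn-Sn-1}.
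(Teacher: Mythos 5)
Your proposal is correct and follows exactly the paper's route: the paper obtains Corollary~\ref{Cor:Real-Moment} precisely by specializing Theorem~\ref{Thm:Main-Dn-Sn-1} to $(\mathbb{D},\mathbb{S})=\{(D^1,S^0,a_j)\}_{j=1}^m$, using the identification $\R\mathcal{Z}_Q\cong (D^1,S^0)^Q$ and the observation that all $n_j=0$ makes $\Cup^{(\mathbb{D},\mathbb{S})}$ the relative cup product for every pair $J,J'$. Your extra care in justifying $\R\mathcal{Z}_Q\cong (D^1,S^0)^Q$ by the $(\Z_2)^m$-analogue of Lemma~\ref{Lem:Homeo-D2S1-Q-hat-ZQ} is consistent with how the paper treats this identification.
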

   Note that in the case of $\R\mathcal{Z}_Q$, the sign factor of the isomorphism between 
    $(\mathcal{R}^*_Q,\cup)$ and $H^*(\R\mathcal{Z}_Q)$ is trivial because the degree of $\iota^J_{(D^1,S^0)}$ is always zero.
 \begin{rem}
    When $Q$ is a simple polytope, 
    the ring structure of the integral cohomology of $\R\mathcal{Z}_Q$ was studied in~\cite{CaiLi17} via a different method.
    \end{rem}

     \subsection{The case of $(\mathbb{X},\mathbb{A})^Q$ with each $A_j$ contractible} 
     \ \vskip .1cm
     
    If in $(\mathbb{X},\mathbb{A})= \{ (X_j,A_j,a_j) \}^m_{j=1}$, each $A_j$ is contractible, we can derive the
    stable decomposition of $(\mathbb{X},\mathbb{A})^Q$ from Theorem~\ref{Thm:Stable-Decomp-X-A} as follows.
    
   \begin{thm} \label{Thm:Stable-Decomp-A-Contract}
   Let $Q$ be a nice manifold with corners with facets $F_1,\cdots, F_m$.
  Let $(\mathbb{X},\mathbb{A})= \{ (X_j,A_j,a_j) \}^m_{j=1}$ where each $A_j$ is contractible and each $X_j$ is either connected or is a disjoint union of a connected CW-complex with its basepoint. Then there is a homotopy equivalence
  \begin{align*}
    S^1\vee \mathbf{\Sigma}\big( (\mathbb{X},\mathbb{A})^{Q} \big)  \simeq
 \mathbf{\Sigma}\big( (\mathbb{X},\mathbb{A})^{Q}_+ \big) 
  & \simeq \bigvee_{J\subseteq [m]} 
 \mathbf{\Sigma}  \Big(  \big( F_{\cap J}\cup q_0\big)  \wedge \bigwedge_{j\in J}  X_j \Big). 
  \end{align*} 
  \[  \text{So we have}\ \, \mathbf{\Sigma}\big( (\mathbb{X},\mathbb{A})^{Q} \big)\simeq  \bigvee_{J\subseteq [m]} 
 \mathbf{\Sigma}  \Big(   F_{\cap J}  \ltimes \bigwedge_{j\in J}  X_j \Big), \ \text{and} \qquad\qquad\qquad\qquad\quad\ \]
   \[ H^*\big((\mathbb{X},\mathbb{A})^{Q} \big) \cong
   \bigoplus_{J\subseteq [m]} \widetilde{H}^* \Big(  \big( F_{\cap J}\cup q_0\big)  \wedge \bigwedge_{j\in J}  X_j \Big).  \]
  \end{thm}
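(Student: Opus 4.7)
The plan is to follow the argument in the proof of Theorem~\ref{Thm:Stable-Decomp-Main}, but with the roles of the contractibility condition reversed, since here the $A_j$'s (rather than the $S^1_{(j)}$'s or $X_j$'s) are the contractible factors. By Theorem~\ref{Thm:Stable-Decomp-X-A} we already know
$$\mathbf{\Sigma}\big((\mathbb{X},\mathbb{A})^{Q}_+\big)\simeq \bigvee_{J\subseteq [m]}\mathbf{\Sigma}\big(\mathrm{colim}(\widehat{\mathbf{D}}^J_{(\mathbb{X},\mathbb{A})+})\big),$$
so the task reduces to identifying each term $\mathrm{colim}(\widehat{\mathbf{D}}^J_{(\mathbb{X},\mathbb{A})+})$ with $(F_{\cap J}\cup q_0)\wedge \bigwedge_{j\in J}X_j$ up to homotopy.

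First I would observe that, since every $A_j$ is contractible, Lemma~\ref{Lem:Smash-Contract} gives that $\widehat{\mathbf{D}}^J_{(\mathbb{X},\mathbb{A})+}((f,L))=f_+\wedge \bigwedge_{j\in J\cap (I_f\backslash L)}X_j\wedge \bigwedge_{j\in J\backslash (I_f\backslash L)}A_j$ is contractible precisely when $J\backslash (I_f\backslash L)\neq\varnothing$, i.e.\ when either $J\not\subseteq I_f$ or $J\cap L\neq\varnothing$; when $J\subseteq I_f\backslash L$, it is exactly $f_+\wedge \bigwedge_{j\in J}X_j$. In direct analogy with~\eqref{Equ:F-Hat-J} I would introduce a replacement diagram $\widehat{\mathbf{E}}^J_{(\mathbb{X},\mathbb{A})+}:\mathcal{P}_{Q}\rightarrow CW_*$ that equals $f_+\wedge \bigwedge_{j\in J}X_j$ on non-contractible pairs and the basepoint $[\widehat{q}^{J}_0]$ on contractible pairs, with structure maps taken to be the natural inclusions or the constant maps to $[\widehat{q}^{J}_0]$. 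The obvious map from $\widehat{\mathbf{D}}^J_{(\mathbb{X},\mathbb{A})+}$ to $\widehat{\mathbf{E}}^J_{(\mathbb{X},\mathbb{A})+}$ is levelwise a homotopy equivalence, so Theorem~\ref{Thm:Homotopy-Colimit-Lemma} yields a homotopy equivalence of the two colimits.

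Next I would compute $\mathrm{colim}(\widehat{\mathbf{E}}^J_{(\mathbb{X},\mathbb{A})+})$ directly. The key poset-theoretic remark, which goes in the opposite direction to the one used in the proof of Theorem~\ref{Thm:Stable-Decomp-Main}, is that the non-contractibility condition $J\subseteq I_f\backslash L$ is closed under going \emph{downward} in $\mathcal{P}_{Q}$: from $(f,L)\leq (f',L')$ one has $I_f\backslash L\supseteq I_{f'}\backslash L'$, so $J\subseteq I_{f'}\backslash L'$ forces $J\subseteq I_f\backslash L$. Consequently the diagram $\widehat{\mathbf{E}}^J_{(\mathbb{X},\mathbb{A})+}$ has no structure map going from a non-contractible cell to a contractible one; all maps between cells of different type are merely inclusions of the basepoint into some $f_+\wedge \bigwedge_{j\in J}X_j$, and no nontrivial collapsing of non-contractible cells occurs. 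The colimit is therefore simply the union of the spaces $f_+\wedge \bigwedge_{j\in J}X_j$ over those faces $f$ with $J\subseteq I_f$ (equivalently, $f\subseteq F_{\cap J}$), glued along the natural face inclusions. Since $\bigcup_{f\subseteq F_{\cap J}}f=F_{\cap J}$, this yields
$$\mathrm{colim}\big(\widehat{\mathbf{E}}^J_{(\mathbb{X},\mathbb{A})+}\big)\cong (F_{\cap J}\cup q_0)\wedge \bigwedge_{j\in J}X_j.$$

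Combining the preceding steps gives the middle homotopy equivalence in the theorem. The ``unadorned'' version then follows by splitting off the $J=\varnothing$ summand via $\mathbf{\Sigma}(Q_+)\simeq S^1\vee \mathbf{\Sigma}(Q)$ (together with its analogue for disconnected $Q$ provided by Lemma~\ref{Lem:Suspen-Union}), cancelling the resulting $S^1$ against the $S^1$ on the left coming from the added basepoint $\widehat{q}^{(\mathbb{X},\mathbb{A})}_0$, exactly as in the last step of the proof of Theorem~\ref{Thm:Stable-Decomp-Main}; the standard identity $Z_+\wedge Y\cong Z\ltimes Y$ then converts each summand into $F_{\cap J}\ltimes \bigwedge_{j\in J}X_j$. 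The cohomology isomorphism is immediate from the wedge decomposition together with $\widetilde{H}^*(\mathbf{\Sigma}X)\cong \widetilde{H}^{*-1}(X)$. The only delicate point, as in the proof of Theorem~\ref{Thm:Stable-Decomp-Main}, is the explicit computation of $\mathrm{colim}(\widehat{\mathbf{E}}^J_{(\mathbb{X},\mathbb{A})+})$; but the opposite direction of the poset-closure eliminates the collapsing issue that arose there and makes the present computation markedly cleaner.
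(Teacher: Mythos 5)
Your proposal is correct and follows essentially the same route as the paper: reduce via Theorem~\ref{Thm:Stable-Decomp-X-A}, replace the cells containing a contractible $A_j$-factor (those with $J\backslash(I_f\backslash L)\neq\varnothing$) by the basepoint to get an auxiliary diagram (the paper's $\widehat{\mathbf{G}}^J_{(\mathbb{X},\mathbb{A})+}$), apply Theorem~\ref{Thm:Homotopy-Colimit-Lemma}, and identify the resulting colimit with $(F_{\cap J}\cup q_0)\wedge\bigwedge_{j\in J}X_j$ using the fact that the surviving condition $J\subseteq I_f\backslash L$ behaves oppositely in $\mathcal{P}_Q$ to the moment-angle case, so no collapsing of the surviving cells occurs. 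Your poset-closure observation is exactly the paper's point that the would-be identified subspaces already lie at the basepoint of the smash, and the final bookkeeping ($\mathbf{\Sigma}(Q_+)\simeq S^1\vee\mathbf{\Sigma}(Q)$ and $Z_+\wedge Y\cong Z\ltimes Y$) matches the paper as well.
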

    \begin{proof}
       By Lemma~\ref{Lem:Smash-Contract} and our assumption on $A_j$, when $J\backslash (I_f\backslash L) \neq \varnothing$,
    \begin{equation*} 
   \widehat{\mathbf{D}}^J_{(\mathbb{X},\mathbb{A})+}((f,L)) = f_+
       \wedge \bigwedge_{j\in J\cap (I_f\backslash L)} X_j 
   \wedge \bigwedge_{j\in  J\backslash (I_f\backslash L)} A_j
    \end{equation*}
  is contractible. So for any $J\subseteq [m]$, we define a diagram of based CW-complexes 
$$\widehat{\mathbf{G}}^J_{(\mathbb{X},\mathbb{A})+}:  \mathcal{P}_{Q}\rightarrow CW_*$$ 
   \begin{equation} \label{Equ:G-J}
       \widehat{\mathbf{G}}^J_{(\mathbb{X},\mathbb{A})+}((f,L)) :=
        \begin{cases}
  \widehat{\mathbf{D}}^J_{(\mathbb{X},\mathbb{A})+}((f,L)) = f_+
       \wedge \underset{j\in  J}{\scalebox{1.3}{$\bigwedge$}} X_j,  &  \text{if $J\backslash (I_f\backslash L)=\varnothing$}; \\
  \quad   [\widehat{q}^J_0],  &  \text{if $J\backslash (I_f\backslash L) \neq \varnothing$}.
 \end{cases} 
    \end{equation}
 where
$(\widehat{g}^J_{(\mathbb{X},\mathbb{A})+})_{(f,L),(f',L')}: 
 \widehat{\mathbf{G}}^J_{(\mathbb{X},\mathbb{A})+}((f',L'))\rightarrow  \widehat{\mathbf{G}}^J_{(\mathbb{X},\mathbb{A})+}((f,L))$ is either the natural inclusion or the constant map $\mathbf{c}_{[\widehat{q}^J_0]}$ for any $(f,L) \leq (f',L')\in \mathcal{P}_{Q}$. The basepoint of $\widehat{\mathbf{G}}^J_{(\mathbb{X},\mathbb{A})+}((f,L))$ is $[\widehat{q}^J_0]$.
 
  Let $\Psi^J_{(\mathbb{X},\mathbb{A})+}: 
  \widehat{\mathbf{D}}^J_{(\mathbb{X},\mathbb{A})+}\rightarrow \widehat{\mathbf{G}}^J_{(\mathbb{X},\mathbb{A})+}$ be a map of diagrams over $\mathcal{P}_{Q}$ defined by:
   $$  (\Psi^J_{(\mathbb{X},\mathbb{A})+})_{(f,L)} : \widehat{\mathbf{D}}^J_{(\mathbb{X},\mathbb{A})+}((f,L)) \rightarrow  \widehat{\mathbf{G}}^J_{(\mathbb{X},\mathbb{A})+}((f,L)) $$
   $$ (\Psi^J_{(\mathbb{X},\mathbb{A})+})_{(f,L)} =  \begin{cases}
    id_{\widehat{\mathbf{D}}^J_{(\mathbb{X},\mathbb{A})+}((f,L))} ,  &  
  \text{if $J\backslash (I_f\backslash L)=\varnothing$}; \\
    \mathbf{c}_{[\widehat{q}^J_0]},  &  \text{if $J\backslash (I_f\backslash L)\neq\varnothing$}.
 \end{cases}  $$

   Then by Theorem~\ref{Thm:Homotopy-Colimit-Lemma}, there exists a homotopy equivalence:
  $$\mathrm{colim}\big(\widehat{\mathbf{D}}^J_{(\mathbb{X},\mathbb{A})+}\big) \simeq
    \mathrm{colim}\big(\widehat{\mathbf{G}}^J_{(\mathbb{X},\mathbb{A})+}\big), \ J\subseteq [m].$$

  To understand $\mathrm{colim}\big(\widehat{\mathbf{G}}^J_{(\mathbb{X},\mathbb{A})+}\big)$, we need to figure out in~\eqref{Equ:G-J} what are 
 those faces $f$ of $Q$ with some $L\subseteq I_f$ such that $J\backslash (I_f\backslash L) = \varnothing$. 
 \begin{itemize}
  \item There exists $L\subseteq I_f$ such that
  $J\backslash (I_f\backslash L) = \varnothing$ if and only if $J\subseteq I_f$. So 
  \begin{equation} \label{Equ:sum-B-sigma-J}
   \bigcup_{f \in \mathcal{S}_Q} \underset{J\backslash (I_f\backslash L) = \varnothing}{\bigcup_{\exists L\subseteq I_f}} f  = \bigcup_{f\in \mathcal{S}_Q} \bigcup_{J\subseteq I_f} f =
    F_{\cap J}.
    \end{equation}

    \item There exists $L\subseteq I_f$ such that
  $J\backslash (I_f\backslash L) \neq \varnothing$ if and only if $J\neq \varnothing$.
    \end{itemize}

 Then for any $\varnothing\neq J \subseteq [m]$, we have
  \begin{align}
   \mathrm{colim}\big(\widehat{\mathbf{G}}^J_{(\mathbb{X},\mathbb{A})+}\big) &\cong \underset{J\backslash (I_f\backslash L)=\varnothing}{\bigcup_{\exists L\subseteq I_f}}  \Big( f_+
       \wedge \bigwedge_{j\in J} X_j  \Big) 
       \bigg\slash \underset{J\backslash (I_f\backslash L)\neq \varnothing}{\bigcup_{\exists L\subseteq I_f}}  \Big( f_+
       \wedge \bigwedge_{j\in I_f\backslash L} X_j 
   \wedge \bigwedge_{j\in  J\backslash ( I_f\backslash L)} A_j
       \Big)  \nonumber \\
    &\simeq \Big( \big( \bigcup_{J\subseteq I_f} f_+ \big)
       \wedge \bigwedge_{j\in J} X_j  \Big) 
       \bigg\slash \underset{J\backslash (I_f\backslash L)\neq \varnothing}{\bigcup_{f\in\mathcal{S}_Q, L\subseteq I_f}}  \Big( f_+
       \wedge \bigwedge_{j\in I_f\backslash L} X_j 
   \wedge \bigwedge_{j\in  J\backslash ( I_f\backslash L)} a_j
       \Big)  \label{Equ:G-Deform-Retract} \\
    &\cong   \big( \bigcup_{J\subseteq I_f} f_+ \big) \wedge \bigwedge_{j\in J}  X_j \bigg\slash [\widehat{q}^J_0]\
    \overset{\eqref{Equ:sum-B-sigma-J}}{=} \ \big(F_{\cap J} \cup q_0\big) \wedge \bigwedge_{j\in J}  X_j. \label{Equ:G-colimit}
  \end{align}     
  
  The ``$\simeq$'' in~\eqref{Equ:G-Deform-Retract}
  is because each $A_j$ is contractible, so $A_j$ deformation retracts to its basepoint $a_j$.
 
  The ``$\cong$'' in~\eqref{Equ:G-colimit} is because
   $f_+
      \times \underset{j\in I_f\backslash L}{\scalebox{1.1}{$\prod$}} X_j 
   \times  \underset{j\in  J\backslash (I_f\backslash L)}{\scalebox{1.1}{$\prod$}} a_j$ 
   is equivalent to the basepoint $[\widehat{q}^J_0]$
   in $f_+
       \wedge \underset{j\in J}{\scalebox{1.3}{$\bigwedge$}} X_j$ since
       $a_j$ is the basepoint of $X_j$.
       
    When $J=\varnothing$, we have
    $$\mathrm{colim}\big(\widehat{\mathbf{G}}^J_{(\mathbb{X},\mathbb{A})+}\big) = \bigcup_{f\in \mathcal{S}_Q} f_+ = 
    Q \cup q_0 = F_{\cap \varnothing}\cup q_0.$$

  So by Theorem~\ref{Thm:Stable-Decomp-X-A}, we have homotopy equivalences:
  \begin{align*} 
  & \quad\ \,
 \mathbf{\Sigma}\big( (\mathbb{X},\mathbb{A})^{Q}_+ \big) =
  \mathbf{\Sigma}\big( \mathrm{colim}(\mathbf{D}_{(\mathbb{X},\mathbb{A})+}) \big) \simeq \bigvee_{J\subseteq [m]} \mathbf{\Sigma}\big( \mathrm{colim}(\widehat{\mathbf{D}}^J_{(\mathbb{X},\mathbb{A})+})\big) \\
   & \simeq  \bigvee_{J\subseteq [m]} \mathbf{\Sigma}\big( \mathrm{colim}\big(\widehat{\mathbf{G}}^J_{(\mathbb{X},\mathbb{A})+}\big)\big) \simeq  \bigvee_{J\subseteq [m]} 
  \mathbf{\Sigma} \Big( \big( F_{\cap J}\cup q_0\big)  \wedge \bigwedge_{j\in J}  X_j \Big). 
  \end{align*}
 
\[ \text{By Definition~\ref{Def:rtimes-ltimes}, we have
   $\big( F_{\cap J}\cup q_0\big)  \wedge \bigwedge_{j\in J}  X_j \cong 
    \begin{cases}
   F_{\cap J} \ltimes \underset{j\in J}{\scalebox{1}{$\bigwedge$}}  X_j ,  &  \text{if $J\neq \varnothing$}; \\
   Q\cup q_0 ,  &  \text{if $J=\varnothing$}.
 \end{cases} $} \]
  Then since $\mathbf{\Sigma}(Q\cup q_0)\simeq 
  S^1\vee\mathbf{\Sigma}(Q)$, the theorem is proved.
    \end{proof}
    
   The cohomology ring structure of
    $(\mathbb{X},\mathbb{A})^Q$ can be computed by Theorem~\ref{Thm:Cohomology-Ring-Isom-X-A}.
   In particular, if any combination of $F_{\cap J}$ and $X_j$'s satisfies the
    strong smash form of the K\"unneth formula over
    a coefficient ring $\mathbf{k}$,    
    we can give an explicit description of the cohomology ring of
    $(\mathbb{X},\mathbb{A})^Q$ with $\mathbf{k}$-coefficients. Indeed, by Theorem~\ref{Thm:Cohomology-Ring-Isom-X-A} and Theorem~\ref{Thm:Stable-Decomp-A-Contract} we obtain an isomorphism of rings
    \begin{equation} \label{Equ:Isom-Cohom-A-Contract}
     \bigoplus_{J\subseteq [m]} \widehat{\Pi}^*_{J}: \bigoplus_{J\subseteq [m]}\Big( H^*\big( F_{\cap J}; \mathbf{k} \big)\otimes \bigotimes_{j\in J} 
    \widetilde{H}^*(X_j;\mathbf{k})   \Big)
     \longrightarrow      
     H^*\big((\mathbb{X},\mathbb{A})^{Q};\mathbf{k} \big) 
     \end{equation}
   where the product $\circledast$ on the left-hand side is defined by~\eqref{Equ:Product-General-X-A} via the partial diagonal maps.    
    We will do some computation of this kind 
    in the next section to describe the equivariant cohomology ring of the moment-angle manifold $\mathcal{Z}_Q$.

 \section{Equivariant cohomology ring of $\mathcal{Z}_Q$
 and $\R\mathcal{Z}_Q$}
 \label{Sec:Equivariant-Cohom}
 
 Let $Q$ be a nice manifold with corners whose facets are $F_1,\cdots, F_m$. Since there is a canonical action of $(S^1)^m$ on $\mathcal{Z}_{Q}$ 
 (see~\eqref{Equ:Canon-Action-Complex}), it is a natural
 problem to compute the equivariant 
 cohomology ring of $\mathcal{Z}_{Q}$ with respect to this action.
 
  For a simple polytope $P$, it is shown in Davis-Januszkiewicz~\cite{DaJan91} that the equivariant cohomology of $\mathcal{Z}_{P}$ with integral coefficients
 is isomorphic to the face ring (or Stanley-Reisner ring)
 $\Z[P]$ of $P$ defined by
 \begin{equation*}
     \Z[P] = \Z[x_1,\cdots, x_m]\slash \mathcal{I}_P,
 \end{equation*} 
    where $\mathcal{I}_P$ is the ideal generated by all
    square-free monomials $x_{i_1}x_{i_2}\cdots x_{i_s}$
    such that $F_{i_1}\cap\cdots\cap F_{i_s} =\varnothing$ in $P$. A liner basis of $\Z[P]$ is given by
    \begin{equation} \label{Equ:Face-Ring-Basis}
     \{1\}\cup   \big\{  x^{n_1}_{i_1}\cdots x^{n_s}_{i_s}\,|\,
        F_{i_1}\cap\cdots\cap F_{i_s} \neq \varnothing,   \,  n_1>0,\cdots, n_s>0  \big\}. 
      \end{equation}
   We can also think of $\Z[P]$ as the face ring of $\partial P^*$ where $P^*$ is the dual simplicial polytope 
   of $P$ (see~\cite[Ch.\,3]{BP02}).

    For brevity, let $T^m=(S^1)^m$. By definition, the \emph{equivariant cohomology} of 
     $\mathcal{Z}_{Q}$, denoted by $H^*_{T^m}(\mathcal{Z}_{Q})$ is the cohomology of the \emph{Borel construction} 
     $$  ET^m \times_{T^m} \mathcal{Z}_{Q} = 
     ET^m \times \mathcal{Z}_{Q} \big\slash \sim $$
    where $(e,x)\sim (eg,g^{-1}x)$ for any $e\in 
    ET^m$, $x\in \mathcal{Z}_{Q}$ and $g\in T^m$.
    Here we let 
    $$ET^m = (ES^1)^m = (S^{\infty})^m.$$ 
    
    Associated to the Borel construction, there is a canonical fiber bundle
    \begin{equation} \label{Equ:Borel-Fiber}
      \mathcal{Z}_{Q} \rightarrow ET^m \times_{T^m} \mathcal{Z}_{Q} \rightarrow BT^m 
      \end{equation}
     where $BT^m = (BS^1)^m = (S^{\infty}\slash S^1)^m=(\mathbb{C}P^{\infty})^m$ is the \emph{classifying space} of $T^m$.

 By Lemma~\ref{Lem:Homeo-D2S1-Q-hat-ZQ}, $\mathcal{Z}_Q$ is equivariantly homeomorphic to $(D^2,S^1)^{Q}$. So computing the equivariant cohomology of $\mathcal{Z}_Q$ is equivalent to computing that for $(D^2,S^1)^{Q}$.

   By the colimit construction of $(D^2,S^1)^{Q}$ in~\eqref{Equ:ZQ-Colimit} and our notation for polyhedral products~\eqref{Equ:X-A-sigma-I-L}, the Borel construction
    \begin{align*} \label{Equ:ZQ-Equiv-Colimit}
  ET^m \times_{T^m} (D^2,S^1)^{Q} &=  
  \bigcup_{(f,L)\in\mathcal{P}_{Q}} 
 ET^m \times_{T^m} (D^2,S^1)^{(f,L)} \\
  &=  \bigcup_{(f,L)\in\mathcal{P}_{Q}} 
   (S^{\infty}\times_{S^1} D^2, S^{\infty}\times_{S^1} 
   S^1)^{(f,L)} \\
   &= (S^{\infty}\times_{S^1} D^2, S^{\infty}\times_{S^1} 
   S^1)^Q.
\end{align*}
  
    Then by the homotopy equivalence of the pairs
    $$(S^{\infty}\times_{S^1} D^2, S^{\infty}\times_{S^1} 
   S^1) \rightarrow (\mathbb{C}P^{\infty}, * )$$
   we can derive from Theorem~\ref{Thm:Homotopy-Colimit-Lemma} that there is a homotopy equivalence
   $$  (S^{\infty}\times_{S^1} D^2, S^{\infty}\times_{S^1} 
   S^1)^Q \simeq (\mathbb{C}P^{\infty}, * )^Q.$$
   
   We call $(\mathbb{C}P^{\infty}, * )^Q$ the \emph{Davis-Januszkiewicz space} of $Q$, denoted by $\mathcal{DJ}(Q)$.
   So the equivariant cohomology ring of $\mathcal{Z}_Q$
   is isomorphic to the ordinary cohomology ring of $\mathcal{DJ}(Q)$.  
   
   Similarly, we can prove that the Borel construction of $\R\mathcal{Z}_Q$
   with respect to the canonical $(\Z_2)^m$-action is
   $(\mathbb{R}P^{\infty}, * )^Q$.

 \begin{proof}[\textbf{Proof of Theorem~\ref{Thm:Equivariant-Cohomology-Z-Q}}]
 \ \vskip .1cm
 
   By the proof of Theorem~\ref{Thm:Stable-Decomp-A-Contract} and the fact that $H^*(\mathbb{C}P^{\infty})$ is torsion free, we can deduce from~\eqref{Equ:G-colimit} that
    $$  \widetilde{H}^*\big( \widehat{\mathbf{D}}^{J}_{(\mathbb{C}P^{\infty}, *)+} \big) \cong
     \widetilde{H}^*\big( \widehat{\mathbf{G}}^{J}_{(\mathbb{C}P^{\infty}, *)+} \big) \cong
   H^*( F_{\cap J})\otimes \bigotimes_{j\in J} 
    \widetilde{H}^*(\mathbb{C}P^{\infty}_{(j)}) , \ \forall J\subseteq [m] $$
  where $(\mathbb{C} P^{\infty})^m = \prod_{j\in [m]}
  \mathbb{C}P^{\infty}_{(j)}$. Then we obtain a ring isomorphism from~\eqref{Equ:Isom-Cohom-A-Contract}:
     \begin{equation*} 
     \bigoplus_{J\subseteq [m]} \widehat{\Pi}^*_{J}: \bigoplus_{J\subseteq [m]}\Big( H^*( F_{\cap J})\otimes \bigotimes_{j\in J} 
    \widetilde{H}^*(\mathbb{C}P^{\infty}_{(j)})   \Big)
     \longrightarrow      
     H^*\big((\mathbb{C}P^{\infty}, *)^{Q}\big)
     \cong H^*_{T^m}\big(\mathcal{Z}_{Q} \big) 
     \end{equation*}
     where the product $\circledast$ on the left-hand side is defined by~\eqref{Equ:Product-General-X-A} via the partial diagonal maps:
     
   \qquad  $    \mathrm{colim} \big(\widehat{\mathbf{D}}^{J\cup J'}_{(\mathbb{C}P^{\infty}, *)+}\big)  \xlongrightarrow{\ \ \, \widehat{\Delta}^{J,J'}_{J\cup J',Q_+}  \ } \mathrm{colim} \big(\widehat{\mathbf{D}}^{J}_{(\mathbb{C}P^{\infty}, *)+}\big) \wedge 
        \mathrm{colim} \big(\widehat{\mathbf{D}}^{J'}_{(\mathbb{C}P^{\infty}, *)+}\big) $.  
  
  \begin{exam} \label{Exam:CP-infty}
    If $Q = [0,1)$, the moment-angle manifold $\mathcal{Z}_{[0,1)} = D^2\backslash S^1$ whose Borel construction is homotopy equivalent to $\mathbb{C}P^{\infty}$. Then we have
    $$ H^*_{S^1}(\mathcal{Z}_{[0,1)}) \cong H^*(\mathbb{C}P^{\infty})\cong \Z[x], \ \mathrm{deg}(x)=2.$$
    
    The above ring isomorphism implies that the homomorphism
    $\Delta^*_{\mathbb{C}P^{\infty}}$ induced by the reduced diagonal map
    $\Delta_{\mathbb{C}P^{\infty}} : \mathbb{C}P^{\infty} \rightarrow 
  \mathbb{C}P^{\infty} \wedge \mathbb{C}P^{\infty}$
   on the integral cohomology is given by
\[
   \Delta^*_{\mathbb{C}P^{\infty}} : \widetilde{H}^*(\mathbb{C}P^{\infty} \wedge \mathbb{C}P^{\infty}) \cong 
  \underset{\overset{\qquad\qquad}{\scalebox{0.94}{$\qquad\quad\ \theta \otimes \theta' \xlongrightarrow{\quad \quad \ \ \ \ \ \qquad\ }  \theta\cup \theta'.$}}}{ \widetilde{H}^*(\mathbb{C}P^{\infty})\otimes 
  \widetilde{H}^*(\mathbb{C}P^{\infty}) \longrightarrow \widetilde{H}^*(\mathbb{C}P^{\infty}) }
\]
 
  \end{exam}

  Then by Lemma~\ref{Lem:Cross-Product-Pull-Back}
  and the above example, for any elements
   \begin{align*}
       u &= \phi \otimes \bigotimes_{j\in J} \theta_j,\ 
       \phi\in  H^*( F_{\cap J}), \theta_j\in 
       \widetilde{H}^*(\mathbb{C}P^{\infty}_{(j)}), \\      
       v &= \phi' \otimes \bigotimes_{j\in J'} \theta'_j,
       \  \phi'\in  H^*( F_{\cap J'}), \theta'_j\in 
       \widetilde{H}^*(\mathbb{C}P^{\infty}_{(j)}),
   \end{align*}
   $$u\circledast v = \big(\kappa^*_{J\cup J', J}(\phi)\cup
   \kappa^*_{J\cup J', J'}(\phi') \big) \otimes \bigotimes_{J\backslash J'} \theta_j\otimes 
   \bigotimes_{J'\backslash J} \theta'_j \otimes \bigotimes_{j\in J\cap J'} \big(\theta_j\cup \theta'_{j}\big),$$
   where $\kappa_{I', I} : F_{\cap I'}\rightarrow F_{\cap I}$ is the inclusion map for any subsets $I\subseteq I'\subseteq [m]$.
   
  Finally, since there is a graded ring isomorphism 
    $$ H^*\big((\mathbb{C}P^{\infty})^m \big) \cong \Z[x_1,\cdots, x_m], \ \mathrm{deg}(x_1)=\cdots=\mathrm{deg}(x_m)=2,$$    
   it is easy to check that $\bigoplus_{J\subseteq [m]}\Big( H^*( F_{\cap J})\otimes \bigotimes_{j\in J} 
    \widetilde{H}^*(\mathbb{C}P^{\infty}_{(j)}) \Big)$
    with the product $\circledast$
    is isomorphic to the topological face ring 
    $\Z\langle Q\rangle =
    \bigoplus_{J\subseteq [m]} H^*( F_{\cap J})\otimes R^J_{\Z}$ where $\bigotimes_{j\in J} 
    \widetilde{H}^*(\mathbb{C}P^{\infty}_{(j)})$ corresponds to $R^J_{\Z}$ (see~\eqref{Equ:RJ-K}). 
    
    By replacing $(D^2,S^1)$ with $(D^1,S^0)$, $(S^1)^m$ with $(\Z_2)^m$ and $\mathbb{C}P^{\infty}$ with $\R P^{\infty}$ in the above argument, and by the fact $H^*(\R P^{\infty};\Z_2)\cong \Z_2[x]$, $\mathrm{deg}(x)=1$,
   we obtain the parallel result for $\R\mathcal{Z}_Q$.     
   \end{proof}
   
   From the canonical fiber bundle associated to the Borel construction in~\eqref{Equ:Borel-Fiber}, we 
   have a natural $H^*(BT^m)$-module structure on
   $H^*_{T^m}(\mathcal{Z}_Q)$. By the identification
   $$H^*(BT^m) = \Z[x_1,\cdots, x_m],$$
    we can write the $H^*(BT^m)$-module structure on $H^*_{T^m}(\mathcal{Z}_Q)$
   as: for each $1\leq i \leq m$,
   \begin{equation} \label{Equ-BT-module-struc}
    x_i \cdot (\phi \otimes f(x)) = (1\otimes x_i)\star (\phi \otimes f(x)) \overset{\eqref{Equ:Face-Ring-Product}}{=} 
   \kappa^*_{J\cup\{i\}, J}(\phi) \otimes x_i f(x)
\end{equation}
   where $\phi\in  H^*(F_{\cap J})$ and
   $f(x)\in R^J_{\Z}$, $J\subseteq [m]$.

 \begin{exam}\label{Exam:Face-Ring-Polytope}
   Let $P$ be a simple polytope with facets $F_1,\cdots, F_m$. For a subset $J\subseteq [m]$, $F_{\cap J}$ is either empty or a face of $P$ and hence acyclic. So we can write the topological face ring of $P$ as
   \begin{align*}
      \Z\langle P\rangle  &\cong  \Big( \underset{J\subseteq[m]}{\bigoplus_{F_{\cap J}\neq \varnothing}}  R^J_{\Z},\ \star \Big) 
      \end{align*}
   where for any $f(x)\in R^J_{\Z}, f'(x)\in R^{J'}_{\Z}$ 
     with $F_{\cap J}\neq \varnothing, F_{\cap J'}\neq \varnothing$,   
   \begin{align*}   
    f(x) &\star f'(x)  =  \begin{cases}
   f(x)f'(x) ,  &  \text{if $F_{\cap(J\cup J')}\neq \varnothing$}; \\
   \   0 ,  &  \text{otherwise}.
 \end{cases}
 \end{align*}
 
  According to the linear basis of the face ring $\Z[P]$ in~\eqref{Equ:Face-Ring-Basis}, we can easily check that 
  $\Z\langle P\rangle$ is isomorphic to $\Z[P]$.
 \end{exam}
 
  \begin{thm} \label{Thm:Equiv-Free-Quotient}
   Let $Q$ be a nice manifold with corners with $m$ facets. 
  If a subtorus $H \subseteq T^m=(S^1)^m$ acts freely on
   $\mathcal{Z}_Q$ through the canonical action, 
    the equivariant cohomology ring with $\Z$-coefficients of the quotient space
   $\mathcal{Z}_Q\slash H$ with respect to the induced
   action of $T^m\slash H$ is isomorphic to 
   the topological face ring $\Z\langle Q\rangle$ of $Q$. 
  \end{thm}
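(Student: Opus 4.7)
The plan is to reduce the statement to Theorem~\ref{Thm:Equivariant-Cohomology-Z-Q} by invoking a standard principle about Borel constructions: whenever a closed subgroup $H$ of a topological group $G$ acts freely on a $G$-space $X$, there is a homotopy equivalence of Borel constructions
$$ EG \times_G X \simeq E(G/H) \times_{G/H} (X/H), $$
which yields a ring isomorphism $H^*_G(X) \cong H^*_{G/H}(X/H)$ on integral cohomology. Once this is established with $G = T^m$ and $X = \mathcal{Z}_Q$, the theorem follows immediately by combining with Theorem~\ref{Thm:Equivariant-Cohomology-Z-Q}.

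To prove the equivalence, I would first consider the projection
$$ p : EG \times_H X \longrightarrow X/H, \qquad [(e,x)] \longmapsto [x]. $$
Since the restricted $H$-action on $EG$ is free, the map $p$ is the $EG$-bundle associated to the principal $H$-bundle $X \to X/H$; contractibility of $EG$ makes $p$ a homotopy equivalence. Because $G$ is abelian (so $H$ is normal), the residual $G/H$-actions on both sides are well defined, and $p$ is $G/H$-equivariant with respect to them. Applying the functor $E(G/H) \times_{G/H} (-)$ preserves $G/H$-equivariant homotopy equivalences between reasonable $G/H$-CW spaces, so we obtain
$$ E(G/H) \times_{G/H} (EG \times_H X) \simeq E(G/H) \times_{G/H} (X/H). $$

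Finally, I would identify the left-hand side with $EG \times_G X$. Rewriting it as $(E(G/H) \times EG \times X)/G$ where $G$ acts diagonally, via the quotient $G \to G/H$ on $E(G/H)$ and via the given actions on $EG$ and $X$, one observes that $E(G/H) \times EG$ is contractible and carries a free $G$-action (inherited from the free $G$-action on $EG$). Hence $E(G/H) \times EG$ is a model for $EG$, and the left-hand side is homotopy equivalent to $EG \times_G X$. Chaining these homotopy equivalences with Theorem~\ref{Thm:Equivariant-Cohomology-Z-Q} gives graded ring isomorphisms
$$ H^*_{T^m/H}(\mathcal{Z}_Q/H) \cong H^*_{T^m}(\mathcal{Z}_Q) \cong \Z\langle Q\rangle. $$

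The conceptual content is transparent, but the main obstacle is bookkeeping: making sure the various residual actions of $G/H$ are consistent across the identifications, and confirming that the induced isomorphisms on cohomology respect the cup product (which they do, since they come from honest homotopy equivalences of topological spaces rather than merely abstract cohomological manipulations). Since Theorem~\ref{Thm:Equivariant-Cohomology-Z-Q} already provides a graded ring isomorphism, no further comparison of products is needed.
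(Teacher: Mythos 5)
Your proposal is correct and follows essentially the same route as the paper: both reduce the statement to Theorem~\ref{Thm:Equivariant-Cohomology-Z-Q} by showing the Borel constructions $ET^m\times_{T^m}\mathcal{Z}_Q$ and $E(T^m/H)\times_{T^m/H}(\mathcal{Z}_Q/H)$ are homotopy equivalent when $H$ acts freely. The only cosmetic difference is that the paper exploits the splitting $T^m\cong H\times (T^m/H)$ to write $ET^m = EH\times E(T^m/H)$ directly, whereas you argue via the contractible-fiber bundle $EG\times_H X\to X/H$ and the model $E(G/H)\times EG$ for $EG$; both are standard and yield the same graded ring isomorphism.
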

  \begin{proof}
   Suppose $T^m \slash H \cong T^k$. Since $H$ acts freely on $\mathcal{Z}_Q$, the Borel constructions of $\mathcal{Z}_{Q}\slash H$ and $\mathcal{Z}_{Q}$ are homotopy equivalent by
  \begin{equation} \label{Equ-Homotopy-Borel}
        ET^m\times_{T^m} \mathcal{Z}_{Q} \cong 
     E H \times \Big( E\big(T^m\slash H \big) \times_{T^m\slash H} \big(\mathcal{Z}_{Q} \slash  H \big) \Big) \simeq ET^k\times_{T^k} 
     \mathcal{Z}_{Q}\slash H.
   \end{equation}
   
     So the equivariant cohomology ring of
   $\mathcal{Z}_Q\slash H$ is isomorphic to the equivariant cohomology ring of $\mathcal{Z}_Q$.  
    Then the theorem follows from 
 Theorem~\ref{Thm:Equivariant-Cohomology-Z-Q}.
  \end{proof}
  
  In Theorem~\ref{Thm:Equiv-Free-Quotient},
  the group homomorphism $T^m \rightarrow T^m\slash H\cong T^k$ induces a commutative diagram
  \[
   \xymatrix{
           E T^m \ar[d] \ar[r]
                & E \big( T^m\slash H \big) \ar[d]  \\
           BT^m  \ar[r] & B \big( T^m\slash H \big).
                 } 
  \]
  which, along with the maps in~\eqref{Equ-Homotopy-Borel}, induce the diagram 
  \begin{equation} \label{Diag-Borel-Construc}
   \xymatrix{
           E T^m \times_{T^m} \mathcal{Z}_Q \ar[d] \ar[r]^{\simeq\qquad\quad }
                & E\big( T^m\slash H \big)\times_{T^m\slash H} \big(\mathcal{Z}_{Q} \slash  H \big)\ar[d]  \\
           BT^m  \ar[r] & B \big(T^m\slash H \big).
                 } 
  \end{equation}
  
  We can describe the natural $H^*\big(B \big(T^m\slash H \big)\big)$-module structure of the integral equivariant cohomology ring of $\mathcal{Z}_Q\slash H$ as follows.
  The inclusion $H \hookrightarrow T^m$ 
  induces a monomorphism $\varphi_H: \Z^{m-k}\rightarrow \Z^m$ whose image is a direct summand in $\Z^m$. This determines an integer $m\times (m-k)$ matrix
  $S=(s_{ij})$ if we choose a basis for each of $\Z^{m-k}$
  and $\Z^m$. Then since the image of $\varphi_H$ is 
  a direct summand in $\Z^m$, there is an integer $k\times m$ matrix $R=(r_{ij})$ of rank $k$ such that 
  $R\cdot S=0$ which defines the homomorphism $T^m\rightarrow T^m\slash H$.
  
 If we write
  $H^*\big(B \big(T^m\slash H \big)\big)=H^*(B T^k) =\Z[y_1,\cdots,y_k]$, it follows from the diagram~\eqref{Diag-Borel-Construc} that the natural $H^*\big(B \big(T^m\slash H \big)\big)$-module structure of the integral equivariant cohomology ring of 
  $\mathcal{Z}_Q\slash H$ is determined by the formula in~\eqref{Equ-BT-module-struc} along with the map $H^*\big(B \big(T^m\slash H \big)\big)\rightarrow H^*(B T^m)$ given by:
   \begin{align*}
    \Z[y_1,\cdots,y_k] &\longrightarrow \Z[x_1,\cdots, x_m]\\
    y_i &\longmapsto r_{i1}x_1 +\cdots + r_{im}x_m.
   \end{align*}
     The above formula is parallel to the formula given  in~\cite[Theorem 7.37]{BP02} (where $Q$ is a simple polytope).
   
  \begin{rem}
   If a subtorus $H\subseteq T^m$ of dimension $m-\dim(Q)$ 
   acts freely on $\mathcal{Z}_Q$ through the canonical action, the quotient space
   $\mathcal{Z}_Q\slash H$ with the induced
   action of $T^m\slash H\cong T^{\dim(Q)}$ can be considered as a 
   generalization of \emph{quasitoric manifold} over a simple polytope defined by Davis and Januszkiewicz~\cite{DaJan91}.  
  \end{rem}
  
   The following is an application of Theorem~\ref{Thm:Equiv-Free-Quotient}
   to locally standard torus actions on closed manifolds. Recall that an action of $T^n$ on a closed $2n$-manifold $M^{2n}$ is called \emph{locally standard} (see~\cite[\S\,1]{DaJan91}) if every point in $M^{2n}$ has a $T^n$-invariant neighborhood that is weakly equivariantly diffeomorphic an open subset of
   $\mathbb{C}^n$ invariant under the \emph{standard $T^n$-action}:
   \[  (g_1,\cdots, g_n)\cdot (z_1,\cdots, z_n) = (g_1z_1,\cdots,g_nz_n), \ g_i\in S^1, z_i\in \mathbb{C}, 1\leq i \leq n.  \]
   
  \begin{cor} \label{Cor:Equiv-Cohom-Free-Quotient}
   Let $M^{2n}$ be a closed smooth $2n$-manifold with a smooth locally standard $T^n$-action and the free part of the action is a trivial $T^n$-bundle. Then the integral equivariant cohomology ring $H^*_{T^n}(M^{2n})$ of $M^{2n}$ is isomorphic to the topological face ring $\Z\langle M^{2n}\slash T^n \rangle$.
  \end{cor}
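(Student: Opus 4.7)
The strategy is to realize $M^{2n}$ as a free quotient of a moment-angle manifold $\mathcal{Z}_Q$ over $Q:=M^{2n}/T^n$, and then to invoke Theorem~\ref{Thm:Equiv-Free-Quotient}.

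First, I would verify that $Q$ is a nice $n$-dimensional manifold with corners. This follows from local standardness, since the orbit space of the standard $T^n$-action on $\mathbb{C}^n$ is $\R^n_{\geq 0}$, so $Q$ is locally modelled on $\R^n_{\geq 0}$, and transversality of the local facets along multiple intersections gives the niceness condition. Let $F_1,\ldots,F_m$ be the facets of $Q$, and let $\pi:M^{2n}\to Q$ denote the quotient map. For each facet $F_i$, the isotropy of any point in $\pi^{-1}(F_i^\circ)$ is a circle subgroup of $T^n$; let $\lambda_M(F_i)\in \Z^n$ be its primitive generator. Local standardness ensures that for every point $x\in Q$ lying in a face, the set $\{\lambda_M(F_i)\mid x\in F_i\}$ extends to a $\Z$-basis of $\Z^n$.

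Next, I would form $\mathcal{Z}_Q$ using the standard basis $\lambda(F_i)=e_i$ of $\Z^m$. The characteristic map $\lambda_M$ extends to a surjective $\Z$-linear map $\Lambda:\Z^m\to \Z^n$, which induces a surjective torus homomorphism $\Lambda_*:T^m\to T^n$. Set $H:=\ker(\Lambda_*)$, a subtorus of $T^m$ of dimension $m-n$, so that $T^m/H\cong T^n$. Using the hypothesis that the free part of the $T^n$-action on $M^{2n}$ is a trivial principal bundle, which provides a global section $s:Q^\circ\to \pi^{-1}(Q^\circ)$, I would define a map $\Phi:\mathcal{Z}_Q\to M^{2n}$ by $\Phi([(x,g)]):=\Lambda_*(g)\cdot \tilde{s}(x)$, where $\tilde{s}$ is a continuous extension of $s$ to $Q$ chosen compatibly with the corner structure, and verify that $\Phi$ descends to an equivariant homeomorphism $\mathcal{Z}_Q/H\cong M^{2n}$ intertwining the induced $T^m/H$-action with the original $T^n$-action. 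The basis property of $\{\lambda_M(F_i)\}$ along each face of $Q$ ensures that $H=\ker(\Lambda_*)$ meets the stabilizer $\mathbb{T}^\lambda_x$ of each point of $\mathcal{Z}_Q$ only at the identity, so the induced $H$-action on $\mathcal{Z}_Q$ is free.

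Finally, an application of Theorem~\ref{Thm:Equiv-Free-Quotient} to this free $H$-action yields
\[
 H^*_{T^n}(M^{2n}) \;\cong\; H^*_{T^n}(\mathcal{Z}_Q/H) \;\cong\; \Z\langle Q\rangle \;=\; \Z\langle M^{2n}/T^n\rangle
\]
as graded rings, as required. The main obstacle is the construction of the equivariant homeomorphism $\Phi$: this is precisely where the triviality hypothesis on the free part is indispensable, since a nontrivial principal $T^n$-bundle over $Q^\circ$ would obstruct the realization of $M^{2n}$ as a quotient of the standard $\mathcal{Z}_Q$ and would require a twisted variant of the moment-angle construction instead.
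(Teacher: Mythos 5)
Your proposal is correct and follows essentially the same route as the paper: observe that $Q=M^{2n}/T^n$ is a nice manifold with corners, realize $M^{2n}$ as a free quotient $\mathcal{Z}_Q/H$ via the Davis--Januszkiewicz characteristic-function argument (which the paper cites to \cite{DaJan91} and \cite{MasPanov06} rather than spelling out, as you do with the map $\Lambda$ and the section coming from the triviality of the free part), and then apply Theorem~\ref{Thm:Equiv-Free-Quotient}. The only small point the paper adds is the remark that $Q$ is triangulable by \cite{John83}, so that the standing convention (a CW structure on $Q$ with faces as subcomplexes) is satisfied and the earlier theorems are applicable.
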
 
  \begin{proof}
    The orbit space $Q=M^{2n}\slash T^n$ is a smooth nice manifold with corners since the $T^n$-action is locally standard and smooth. Then $Q$ is triangulable (by~\cite{John83}) and hence all our theorems can be applied to $Q$.
   In addition, using the characteristic function argument
   in Davis-Januszkiewicz~\cite{DaJan91} (also see~\cite[\S\,4.2]{MasPanov06} or~\cite{Yos11}), we can prove that $M^{2n}$ is a free quotient space
   of $\mathcal{Z}_{Q}$ by a canonical action of some torus.
   Then this corollary follows from Theorem~\ref{Thm:Equiv-Free-Quotient}. 
   \end{proof}
   
   \begin{rem} 
    The equivariant cohomology ring
   $H^*_{T^n}(M^{2n})$ in the above corollary was also computed by
   Ayzenberg-Masuda-Park-Zeng~\cite[Proposition 5.2]{AntonMasParkZeng17}
   under an extra assumption that 
   all the proper faces of $M^{2n}\slash T^n$ are acyclic.
   We leave it as an exercise for the reader to check that the formula for $H^*_{T^n}(M^{2n})$ given in~\cite{AntonMasParkZeng17} is isomorphic to $\Z\langle M^{2n}\slash T^n \rangle$.  
   \end{rem}

    \section{Generalizations} \label{Sec:Generalization}
      
        Let $Q$ be a nice manifold with corners
        with facets
        $\mathcal{F}(Q) =\{F_1,\cdots, F_m \}$.  
      Observe that neither in the construction of $\mathcal{Z}_Q$ nor in the proof of Theorem~\ref{Thm:Stable-Decomp-Main} and Theorem~\ref{Thm:Cohomology-Ring-Isom} do we really use the connectedness of each facet $F_j$. So we have the following generalization of $\mathcal{Z}_Q$.

   Let $\mathcal{J}=\{ J_1,\cdots, J_k\}$ 
       be a \emph{partition} of $[m]=\{1,\cdots, m\}$.
        i.e. the $J_i$'s are disjoint subsets of $[m]$
       with $J_1 \cup\cdots \cup J_k =[m]$. So $\partial Q = F_{J_1}\cup \cdots \cup F_{J_k}$. Moreover, we require
    $\mathcal{J}$ to satisfy the following condition in our discussion: 
      \begin{equation}\label{Condition-J}
      \text{for any $1\leq i \leq k$, if $j, j' \in J_i$, then $F_j\cap F_{j'}=\varnothing$. }
       \end{equation}      
       From $Q$ and the partition $\mathcal{J}$, we can construct the following manifold. 
         
    Let $\{ e_1,\cdots, e_k\}$ be a unimodular basis of $\Z^k$. Let $\mu: \mathcal{F}(Q) \rightarrow \Z^k$ be the map which sends all the facets in $F_{J_i}$ to $e_i$ for every $1\leq i \leq k$. Define         
       \begin{equation*} 
       \mathcal{Z}_{Q,\mathcal{J}} := Q\times (S^1)^k \slash \sim
     \end{equation*} 
     where $(x,g) \sim (x',g')$ if and only if $x=x'$ and $g^{-1}g' \in \mathbb{T}^{\mu}_x$
   where $\mathbb{T}^{\mu}_x$ is the subtorus of $(S^1)^k=\R^k\slash \Z^k$ determined by
   the linear subspace of $\R^k$ spanned by the set $\{ \mu(F_j) \, |\, x\in F_j \}$.
  There is a canonical action of $(S^1)^k$ on $\mathcal{Z}_{Q,\mathcal{J}}$ defined by:
   \begin{equation} \label{Equ:Canon-Action-Complex-J}
    g' \cdot [(x,g)] = [(x,g'g)], \ x\in Q,\ g,g'\in (S^1)^k. 
   \end{equation} 
   
   If $\mathcal{J}_0=\big\{ \{1\},\cdots, \{m\}\big\}$
    is the trivial partition of $[m]$, then $  \mathcal{Z}_{Q,\mathcal{J}_0}  = \mathcal{Z}_Q$.  
      
   Note that here $\{ F_{J_i}\}$
    play the role of facets $\{ F_j\}$ in the definition of $\mathcal{Z}_Q$. But $F_{J_i}$
    may not be connected. Using the term defined in Davis~\cite{Da83}, the decomposition of $\partial Q$ into $\{ F_{J_i}\}$ is called a \emph{panel structure on $Q$} and each
    $F_{J_i}$ is called a \emph{panel}.

 \begin{rem}
   For a general partition $\mathcal{J}$ of $[m]$, it is possible that $F_j\cap F_{j'}\neq \varnothing$ for some $j,j' \in J_i$. Although the definition of $\mathcal{Z}_{Q,\mathcal{J}}$ still makes sense in the general setting,
   the orbit space of the $(S^1)^k$-action on $\mathcal{Z}_{Q,\mathcal{J}}$ may not be $Q$ (as a
manifold with corners). It would be $Q$ with some corners smoothed. But for a general partition of $[m]$, one can always reduce to the case where the  condition~\eqref{Condition-J} is satisfied by smoothing the corners of the orbit space. 
\end{rem}

    For any subset $\omega\subseteq [k] =\{1,\cdots, k\}$, let 
      $$F_{\omega} = \bigcup_{i\in \omega} F_{J_i},\ F_{\varnothing}=\varnothing,
       \ \ F_{\cap\omega} = \bigcap_{i\in \omega} F_{J_i}, \ F_{\cap\varnothing}=Q.$$

  \begin{thm} \label{Thm:Main-General-2}
   Let $Q$ be a nice manifold with corners with facets
   $F_1,\cdots, F_m$.
    For any partition $\mathcal{J}=\{ J_1,\cdots, J_k\}$ 
     of $[m]=\{1,\cdots, m\}$, we have
      \begin{equation*}  
           \mathbf{\Sigma}\big( \mathcal{Z}_{Q,\mathcal{J}} \big) \simeq \bigvee_{\omega\subseteq [k]} 
           \mathbf{\Sigma}^{|\omega|+1}( Q\slash F_{\omega}),\ \
           H^p \big( \mathcal{Z}_{Q,\mathcal{J}} \big)\cong \bigoplus_{\omega\subseteq [k]} H^{p-|\omega|}(Q,F_{\omega}), \, \forall p\in \Z. 
  \end{equation*} 
      \end{thm}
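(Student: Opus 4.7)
The plan is to mimic the proof of Theorem~\ref{Thm:Stable-Decomp-Main} essentially verbatim, replacing the individual facets $F_1,\dots,F_m$ by the panels $F_{J_1},\dots,F_{J_k}$. The key observation is that the argument of Section~\ref{Sec:Stable-Decomp} never uses the fact that each $F_j$ is connected or codimension-one; it only uses the combinatorial incidence data between the face poset $\mathcal{S}_Q$ and the indexed collection of facets. Since $\{F_{J_i}\}_{i=1}^{k}$ is also a finite indexed collection of closed subsets whose union is $\partial Q$, the same combinatorial framework is available with facets replaced by panels.

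Concretely, I first introduce the panel strata index $I_f^{\mathcal{J}} := \{i \in [k] \mid f \subseteq F_{J_i}\}$ of a face $f$, and define the panel rim-cubicalization
$$\widehat{Q}_{\mathcal{J}} := \bigcup_{f \in \mathcal{S}_Q} \widehat{f}_{\mathcal{J}} \subseteq Q \times [0,1]^k, \qquad \widehat{f}_{\mathcal{J}} := f \times \prod_{i \in I_f^{\mathcal{J}}} [0,1]_{(i)} \times \prod_{i \in [k]\setminus I_f^{\mathcal{J}}} 1_{(i)}.$$
The isotopy argument of Lemma~\ref{Lem:Homeo-Q-Q-hat} then identifies $\widehat{Q}_{\mathcal{J}}$ with $Q$, and an adaptation of Lemma~\ref{Lem:Homeo-D2S1-Q-hat-ZQ} shows that $\mathcal{Z}_{Q,\mathcal{J}}$ is equivariantly homeomorphic to
$$(D^2,S^1)^{Q,\mathcal{J}} := \bigcup_{f \in \mathcal{S}_Q} \Big( f \times \prod_{i \in I_f^{\mathcal{J}}} D^2_{(i)} \times \prod_{i \in [k]\setminus I_f^{\mathcal{J}}} S^1_{(i)} \Big) \subseteq Q \times (D^2)^k,$$
via the analogue of the quotient map~\eqref{Equ:Pi-square}. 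Refining this by subsets $L \subseteq I_f^{\mathcal{J}}$ as in~\eqref{Equ:D2S1-Sigma-I-L}, one realizes $(D^2,S^1)^{Q,\mathcal{J}}$ as the colimit of a diagram of CW-complexes indexed by the panel poset $\mathcal{P}_{Q,\mathcal{J}} := \{(f,L) \mid f \in \mathcal{S}_Q,\ L \subseteq I_f^{\mathcal{J}}\}$.

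With this setup, the argument of Section~\ref{Subsec:Stable-Decomp} transfers verbatim: adjoin a common base-point, use Lemma~\ref{Lem:Stable-Decomp-plus} and Theorem~\ref{Thm:Stabel-Decomp-rtimes} to split each level of the diagram into a wedge indexed by subsets $\omega \subseteq [k]$, then apply Theorem~\ref{Thm:Homotopy-Colimit-Lemma} to collapse the contractible pieces. The only spot requiring real checking is the panel analogues of identities~\eqref{Equ:I-J-1} and~\eqref{Equ:I-J-2}, namely
$$\bigcup_{f \in \mathcal{S}_Q}\; \bigcup_{\substack{\exists L \subseteq I_f^{\mathcal{J}} \\ \omega \cap (I_f^{\mathcal{J}}\setminus L) \neq \varnothing}} f \;=\; F_{\omega}, \qquad \bigcup_{f \in \mathcal{S}_Q}\; \bigcup_{\substack{\exists L \subseteq I_f^{\mathcal{J}} \\ \omega \cap (I_f^{\mathcal{J}}\setminus L) = \varnothing}} f \;=\; Q^{\circ} \cup F_{[k]\setminus \omega},$$
but both follow at once from the tautology $f \subseteq F_{J_i} \Leftrightarrow i \in I_f^{\mathcal{J}}$ together with $F_{\omega} = \bigcup_{i \in \omega} F_{J_i}$. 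The colimit for $\omega \neq \varnothing$ is then $\mathbf{\Sigma}^{|\omega|}(Q\slash F_{\omega})$ and for $\omega = \varnothing$ it is $Q_+$; using $\mathbf{\Sigma}(Q_+) \simeq S^1 \vee \mathbf{\Sigma}(Q)$ and peeling off the extra $S^1$ coming from the added base-point yields the stated wedge decomposition, and the cohomology formula follows by reduced suspension. I do not foresee any essential obstacle, since the whole proof is a direct transport of the facet-indexed argument to the panel-indexed one.
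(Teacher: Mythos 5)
Your proposal is correct and follows essentially the same route as the paper: the paper's own proof likewise introduces the panel strata index $I^{\mathcal{J}}_f$, the panel rim-cubicalization $\widehat{Q}^{\mathcal{J}}\subseteq Q\times[0,1]^k$, the spaces $(D^2,S^1)^{(f,L)}_{\mathcal{J}}$, and then transports the proof of Theorem~\ref{Thm:Stable-Decomp-Main} via exactly the dictionary of symbols you describe. Your observation that connectedness of the facets is never used, and your identification of the panel analogues of~\eqref{Equ:I-J-1} and~\eqref{Equ:I-J-2} as the only identities needing re-verification, match the paper's argument.
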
     
  \begin{proof} 
       We can generalize the rim-cubicalization of $Q$ in Section~\ref{Subsec:Rim-Cubical} as follows. For any
       face $f$ of $Q$, let
       $$I^{\mathcal{J}}_f=\{ i\in [k]\,|\, f\subseteq F_{J_i} \} \subseteq [k]. $$
       
       Then define
        \begin{equation*} 
       \widehat{f}^{\mathcal{J}} = f 
       \times \prod_{i\in I^{\mathcal{J}}_f} [0,1]_{(i)} 
   \times \prod_{i\in [k]\backslash I^{\mathcal{J}}_f} 1_{(i)}
   \end{equation*}
   \begin{equation*}
      \widehat{Q}^{\mathcal{J}} = \bigcup_{f\in \mathcal{S}_Q}   \widehat{f}^{\mathcal{J}}  \subseteq Q\times [0,1]^k.\
   \end{equation*}

  By the same argument as in the proof of Lemma~\ref{Lem:Homeo-Q-Q-hat}, we can show that 
 $\widehat{Q}^{\mathcal{J}}$ with faces $\widehat{f}^{\mathcal{J}}$ is homeomorphic to
 $Q$ as a manifold with corners.
 The partition $\mathcal{J}$ of the 
 facets of $Q$ naturally induces a partition of
 the corresponding facets of $\widehat{Q}^{\mathcal{J}}$,  also denoted by $\mathcal{J}$. So we have
 $\mathcal{Z}_{\widehat{Q}^{\mathcal{J}},\mathcal{J}} \cong \mathcal{Z}_{Q,\mathcal{J}}$.
      
  For any
       face $f$ of $Q$, let 
   \begin{align*} 
      (D^2,S^1)^f_{\mathcal{J}} &:= f
       \times \prod_{i\in I^{\mathcal{J}}_f} D^2_{(i)} 
   \times \prod_{i\in [k]\backslash I^{\mathcal{J}}_f} S^1_{(i)},\\ 
  (D^2,S^1)^{Q}_{\mathcal{J}} 
  &:= \bigcup_{f\in \mathcal{S}_Q}   (D^2,S^1)^f_{\mathcal{J}} \subseteq Q\times (D^2)^k .
     \end{align*}  
      
   There is a canonical $(S^1)^k$-action on $ (D^2,S^1)^{Q}_{\mathcal{J}} $ induced from the
   canonical $(S^1)^k$-action on $Q\times (D^2)^k$.
   And parallel to Lemma~\ref{Lem:Homeo-D2S1-Q-hat-ZQ}, we can prove that there is an equivariant homeomorphism
      from $ (D^2,S^1)^{Q}_{\mathcal{J}} $ to 
      $\mathcal{Z}_{\widehat{Q}^{\mathcal{J}},\mathcal{J}} \cong \mathcal{Z}_{Q,\mathcal{J}}$.
      
      For any subset $L\subseteq I^{\mathcal{J}}_f$, let
      \[   (D^2,S^1)^{(f,L)}_{\mathcal{J}} := f
       \times \prod_{i\in I^{\mathcal{J}}_f\backslash L} D^2_{(i)} 
   \times \prod_{i\in [k]\backslash (I^{\mathcal{J}}_f \backslash L)} S^1_{(i)}. \]
      
       We can easily translate the proof of 
  Theorem~\ref{Thm:Stable-Decomp-Main} to obtain the
   desired
   stable decomposition of $
   \mathcal{Z}_{Q,\mathcal{J}}\cong (D^2,S^1)^Q_{\mathcal{J}}$ by the following correspondence of symbols.      
     \begin{align*}
  \text{The proof of Theorem~\ref{Thm:Stable-Decomp-Main}}\quad\ &\  
      \text{The proof of Theorem~\ref{Thm:Main-General-2}}\\
         J \subseteq [m]  \qquad \qquad\ \ \ & \qquad\qquad\ \  \omega \subseteq [k]\\
       F_J \  \ \ \qquad\qquad\quad\ & \qquad\qquad\;\,\; \ \ \, F_{\omega}\\
        I_f  \subseteq [m]  \qquad \qquad\ \ \ & \qquad\qquad\   I^{\mathcal{J}}_f \subseteq [k] \\
     D^2_{(j)}, S^1_{(j)}, j\in [m]  \quad \quad\ \ \ & \quad \quad \  \ \  D^2_{(i)}, S^1_{(i)}, i\in [k]\\
       (D^2,S^1)^{(f,L)}   \qquad\qquad \, & \qquad\quad\
       \ (D^2,S^1)^{(f,L)}_{\mathcal{J}}
        \end{align*} 
   \end{proof}
   
  \begin{rem}
    Theorem~\ref{Thm:Main-General-2}
     is an analogue of~\cite[Theorem 1.3]{Yu19}.   
   \end{rem}

  To describe the cohomology ring of $\mathcal{Z}_{Q,\mathcal{J}}$, let
  \begin{equation} \label{Equ:Def-R-Q}
    \mathcal{R}^*_{Q,\mathcal{J}} :=\bigoplus_{\omega \subseteq [k]} H^*(Q,F_{\omega}).
    \end{equation} 
  
  There is a graded ring structure $\Cup_{\mathcal{J}}$ on $\mathcal{R}^*_{Q,\mathcal{J}}$ defined as follows.
  \begin{itemize}
       \item If $\omega\cap \omega' \neq \varnothing$,
       then
     $ H^*(Q,F_{\omega}) \otimes H^{*}(Q,F_{\omega'}) \overset{\Cup_{\mathcal{J}}}{\longrightarrow}
    H^{*}(Q,F_{\omega\cup \omega'})$ is trivial.
    
     \item If $\omega\cap \omega'=\varnothing$, then
  $ H^*(Q,F_{\omega}) \otimes H^{*}(Q,F_{\omega'}) \overset{\Cup_{\mathcal{J}}}{\longrightarrow}
    H^{*}(Q,F_{\omega\cup \omega'})$ is the relative cup product $\cup$.
     
    \end{itemize}

   To describe the equivariant cohomology ring of $\mathcal{Z}_{Q,\mathcal{J}}$, let 
     $$ \mathbf{k}^{\mathcal{J}}\langle Q\rangle := \bigoplus_{\omega\subseteq [k]} H^*( F_{\cap \omega};\mathbf{k})\otimes R^{\omega}_{\mathbf{k}}. $$
   where the product on $\mathbf{k}^{\mathcal{J}}\langle Q\rangle $
   is defined in the same way as $\mathbf{k}\langle Q\rangle$ in
   Definition~\ref{Def:Top-Face-Ring}.

    The following theorem generalizes
 Theorem~\ref{Thm:Cohomology-Ring-Isom} and 
 Theorem~\ref{Thm:Equivariant-Cohomology-Z-Q}. The proof 
  is omitted since it is completely parallel to the proof of these two theorems.
 
 \begin{thm}  \label{Thm:Main-General-Cohomology-2}
    Let $Q$ be a nice manifold with corners with $m$ facets $F_1,\cdots, F_m$ and let $\mathcal{J}=\{J_1,\cdots, J_k\}$ be a partition of $[m]$. 
    \begin{itemize}
     \item There is a ring isomorphism (up to a sign)
     from $(\mathcal{R}^*_{Q,\mathcal{J}},\Cup_{\mathcal{J}})$ to the integral cohomology ring of $\mathcal{Z}_{Q,\mathcal{J}}$. Moreover,
    we can make this ring isomorphism degree-preserving by shifting 
    the degrees of all
    the elements in $H^*(Q,F_{\omega})$ up by $|\omega|$ for every $\omega\subseteq [k]$.
    
     \item There is a graded ring isomorphism from the equivariant cohomology ring of $\mathcal{Z}_{Q,\mathcal{J}}$ with integral coefficients to $\Z^{\mathcal{J}}\langle Q\rangle$ by
     choosing $\mathrm{deg}(x_i)=2$ for all $1\leq i \leq k$.
     \end{itemize}  
 \end{thm}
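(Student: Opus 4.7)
The plan is to mimic the proofs of Theorem~\ref{Thm:Cohomology-Ring-Isom} and Theorem~\ref{Thm:Equivariant-Cohomology-Z-Q} verbatim under the symbol dictionary already exhibited at the end of the proof of Theorem~\ref{Thm:Main-General-2}, namely $J\subseteq[m]\leftrightarrow \omega\subseteq[k]$, $F_J\leftrightarrow F_{\omega}$, $I_f\leftrightarrow I^{\mathcal{J}}_f$, and $D^2_{(j)},S^1_{(j)}\leftrightarrow D^2_{(i)},S^1_{(i)}$.

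For the first bullet I would first set up the colimit model. Writing $\mathcal{Z}_{Q,\mathcal{J}}\cong (D^2,S^1)^Q_{\mathcal{J}}$ and adjoining the base-point $\widehat{q}_0=q_0\times\prod_{i\in[k]}1_{(i)}$ to every piece $(D^2,S^1)^{(f,L)}_{\mathcal{J}}$, I obtain a diagram $\mathbf{D}^{\mathcal{J}}_+$ over the poset $\mathcal{P}^{\mathcal{J}}_Q=\{(f,L)\mid L\subseteq I^{\mathcal{J}}_f\}$ whose colimit is the plus version of $(D^2,S^1)^Q_{\mathcal{J}}$. The reduced diagonal of $Q_+\times(D^2)^k$ restricts to a commutative square of colimits exactly analogous to~\eqref{Equ:Diagram-Z-Q} for every pair $\omega,\omega'\subseteq[k]$, and this together with the wedge splitting of Theorem~\ref{Thm:Main-General-2} endows $\bigoplus_{\omega\subseteq[k]}\widetilde{H}^*(\mathrm{colim}(\widehat{\mathbf{D}}^{\omega}_+))$ with a product $\circledast$ for which $\bigoplus_\omega\widehat{\Pi}^*_\omega$ is a ring isomorphism onto $\widetilde{H}^*(\mathcal{Z}_{Q,\mathcal{J}})$. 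When $\omega\cap\omega'\neq\varnothing$, picking $i\in\omega\cap\omega'$ and using that $D^2_{(i)}\simeq\mathbf{\Sigma}D^1_{(i)}$ is a suspension, Lemma~\ref{Lem:Reduced-Diagonal-Null} forces the relevant partial diagonal to be null-homotopic, so $\circledast$ vanishes; when $\omega\cap\omega'=\varnothing$, Lemma~\ref{Lem:Cross-Product-Pull-Back} together with Lemma~\ref{Lem:Induced-Cup-Prod} and the identification $\mathrm{colim}(\widehat{\mathbf{D}}^{\omega}_+)\simeq\mathbf{\Sigma}^{|\omega|}(Q/F_\omega)$ translates $\circledast$ into the relative cup product $\cup\colon H^*(Q,F_\omega)\otimes H^*(Q,F_{\omega'})\to H^*(Q,F_{\omega\cup\omega'})$. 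These two cases match the definition of $\Cup_{\mathcal{J}}$, and the degree shift by $|\omega|$ is produced by the cross product with $\iota^{\omega}$ as in~\eqref{Equ:Cohom-Wedge-Sphere}.

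For the second bullet I would first reduce to a polyhedral product computation. Rerunning Section~\ref{Sec:Equivariant-Cohom} with panels in place of facets gives the homotopy equivalence $ET^k\times_{T^k}\mathcal{Z}_{Q,\mathcal{J}}\simeq(\mathbb{C}P^{\infty},\ast)^Q_{\mathcal{J}}$, so the equivariant cohomology equals the ordinary cohomology of this $\mathcal{J}$-polyhedral product. Since each $A_i=\ast$ is contractible, the $\mathcal{J}$-analogue of Theorem~\ref{Thm:Stable-Decomp-A-Contract} yields $\mathrm{colim}(\widehat{\mathbf{G}}^{\omega}_{(\mathbb{C}P^{\infty},\ast)+})\simeq (F_{\cap\omega}\cup q_0)\wedge\bigwedge_{i\in\omega}\mathbb{C}P^{\infty}_{(i)}$. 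The freeness of $H^*(\mathbb{C}P^{\infty})$ lets the K\"unneth formula identify the reduced cohomology of this space with $H^*(F_{\cap\omega})\otimes R^{\omega}_{\Z}$, and by Example~\ref{Exam:CP-infty} the reduced diagonal of $\mathbb{C}P^{\infty}$ pulls back the cross product to the cup product. Feeding this into the shuffle formula of Lemma~\ref{Lem:Cross-Product-Pull-Back} for the partial diagonal $\widehat{\Delta}^{\omega,\omega'}_{\omega\cup\omega',Q_+}$ shows that $\circledast$ becomes precisely the product $\star$ in~\eqref{Equ:Face-Ring-Product} after identifying $\bigotimes_{i\in\omega}\widetilde{H}^*(\mathbb{C}P^{\infty}_{(i)})$ with $R^{\omega}_{\Z}$, giving the desired graded ring isomorphism onto $\Z^{\mathcal{J}}\langle Q\rangle$ with $\mathrm{deg}(x_i)=2$.

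The main obstacle is essentially bookkeeping rather than conceptual: the panels $F_{J_i}$ and their intersections $F_{\cap\omega}$ need not be connected, so before quoting the stable decomposition results I must verify that each $f_+$ and each $F_{\cap\omega,+}$ satisfies the hypotheses of Theorem~\ref{Thm:Generalized-Stable-Decomp} and Theorem~\ref{Thm:Stabel-Decomp-rtimes}, which is where the ``disjoint union of a connected CW-complex with its base-point'' clause is used. The disconnectedness also means I must carefully define the inclusions $\kappa_{\omega',\omega}\colon F_{\cap\omega'}\to F_{\cap\omega}$ in the $\mathcal{J}$-setting so that the product $\star$ in $\Z^{\mathcal{J}}\langle Q\rangle$ is well-defined; once this is done, every step of the original two proofs transfers through the dictionary above without further modification.
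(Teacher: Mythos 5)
Your proposal is correct and follows exactly the route the paper intends: the paper omits the proof of this theorem precisely because it is ``completely parallel'' to the proofs of Theorem~\ref{Thm:Cohomology-Ring-Isom} and Theorem~\ref{Thm:Equivariant-Cohomology-Z-Q} under the symbol dictionary given at the end of the proof of Theorem~\ref{Thm:Main-General-2}, which is what you carry out. Your bookkeeping caveat about disconnected panels is reasonable but harmless, since the wedge decompositions are only ever applied to the (connected) faces $f$ of $Q$, not to the panels $F_{J_i}$ or the intersections $F_{\cap\omega}$ themselves.
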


     By combining the constructions in Theorem~\ref{Thm:Main-General-X-A} and Theorem~\ref{Thm:Main-General-2}, we have the following  definitions which provide the most general setting for our study.  
      
       Let $\mathcal{J}=\{ J_1,\cdots, J_k\}$ 
       be a \emph{partition} of $[m]=\{1,\cdots, m\}$ and
       let 
       $$(\mathbb{X},\mathbb{A}) = \{ (X_i,A_i,a_i) \}^k_{i=1}$$
   where $X_i$ and $A_i$ are CW-complexes with a basepoint $a_i\in A_i\subseteq X_i$. 
   
   For any face $f$ of $Q$, let
   \begin{equation*} 
      (\mathbb{X},\mathbb{A})^f_{\mathcal{J}} := f 
       \times \prod_{i\in I^{\mathcal{J}}_f} X_i 
   \times \prod_{i\in [k]\backslash I^{\mathcal{J}}_f} A_i,
   \end{equation*}
     \begin{equation*} 
  (\mathbb{X},\mathbb{A})^Q_{\mathcal{J}}
  := \bigcup_{f\in \mathcal{S}_Q}   (\mathbb{X},\mathbb{A})^f_{\mathcal{J}}  
   \subseteq Q\times \prod_{i\in [k]} X_i.
   \end{equation*}

   The following theorem generalizes 
   Theorem~\ref{Thm:Main-General-X-A} and
   Theorem~\ref{Equ:Cohomology-Ring-Iso-X-A}.

     \begin{thm} \label{Thm:Main-General-J-1}
     Let $Q$ be a nice manifold with corners
      with facets $F_1,\cdots, F_m$. Let $ (\mathbb{X},\mathbb{A}) = \{ (X_i,A_i,a_i) \}^k_{i=1}$ where each $X_i$ is contractible and each $A_i$ is either connected or is a disjoint union of a connected CW-complex with its basepoint. Then for any partition $\mathcal{J}=\{J_1,\cdots,J_k\}$ of $[m]$, there is a homotopy equivalence
      \begin{equation*}  
           \mathbf{\Sigma}\Big(  (\mathbb{X},\mathbb{A})^Q_{\mathcal{J}}  \Big) \simeq \bigvee_{\omega\subseteq [k]} 
           \mathbf{\Sigma} \Big( Q\slash F_{\omega}\wedge\bigwedge_{i\in\omega} A_i \Big).
           \end{equation*} 
           
  In addition,  there is a ring isomorphism
      \[  
         \Big( \bigoplus_{\omega\subseteq [k]} \widetilde{H}^*\big(Q\slash F_{\omega}\wedge\bigwedge_{i\in\omega} A_i \big), \circledast \Big) \longrightarrow
         \widetilde{H}^*\big( (\mathbb{X},\mathbb{A})^Q_{\mathcal{J}} \big) \]  
         where $\circledast$ is defined in the same way
         as in~\eqref{Equ:Product-General-X-A}.
      \end{thm}

      In particular, for
      $(\mathbb{D},\mathbb{S}) =
    \big\{ \big( D^{n_i+1}, S^{n_i}, a_i \big)\big\}^k_{i=1}$, we can describe the integral cohomology ring of $(\mathbb{D},\mathbb{S})^Q_{\mathcal{J}} $ explicitly as follows.      
   Define a graded ring structure $\Cup^{(\mathbb{D},\mathbb{S})}_{\mathcal{J}}$ on $\mathcal{R}^*_{Q,\mathcal{J}} $ according to
    $(\mathbb{D},\mathbb{S})$ by:
  \begin{itemize}
   \item If $\omega\cap \omega'=\varnothing$ or $\omega\cap \omega' \neq \varnothing$ but $n_i=0$ for all $i\in \omega\cap \omega'$, then \\   
  $ H^*(Q,F_{\omega}) \otimes H^{*}(Q,F_{\omega'}) \xlongrightarrow{\ \Cup^{(\mathbb{D},\mathbb{S})}_{\mathcal{J}}}
    H^{*}(Q,F_{\omega\cup \omega'})$ is the relative cup product.\vskip .1cm
          
     \item If $\omega\cap \omega' \neq \varnothing$ and there exists $n_i\geq 1$ for some $i\in \omega\cap \omega'$, then
    \\   
  $ H^*(Q,F_{\omega}) \otimes H^{*}(Q,F_{\omega'}) \xlongrightarrow{\ \Cup^{(\mathbb{D},\mathbb{S})}_{\mathcal{J}}}
    H^{*}(Q,F_{\omega\cup \omega'})$ is trivial.
    \end{itemize}
    
  \begin{thm} \label{Thm:Main-General-J-2}
     Let $Q$ be a nice manifold with corners with facets $F_1,\cdots, F_m$. For any partition $\mathcal{J}=\{J_1,\cdots,J_k\}$ of $[m]$ and $(\mathbb{D},\mathbb{S}) =
    \big\{ \big( D^{n_i+1}, S^{n_i}, a_i \big)\big\}^k_{i=1}$, there is a homotopy equivalence
      \begin{equation*}  
           \mathbf{\Sigma}\big(  (\mathbb{D},\mathbb{S})^Q_{\mathcal{J}}  \big) \simeq \bigvee_{\omega\subseteq [k]} 
            \mathbf{\Sigma}^{1+\sum_{i\in \omega} n_i} \big( Q\slash F_{\omega} \big).
           \end{equation*}                       
   And there is a ring isomorphism (up to a sign) from
      $ 
         \big(\mathcal{R}^*_{Q,\mathcal{J}},\Cup^{(\mathbb{D},\mathbb{S})}_{\mathcal{J}} \big)$ to the integral cohomology ring of $(\mathbb{D},\mathbb{S})^Q_{\mathcal{J}}$. Moreover, we can make this 
         ring isomorphism degree-preserving by shifting the degrees
         of the elements in $H^*(Q,F_{\omega})$ for every $\omega\subseteq [k]$.
      \end{thm}   
      
    When $(\mathbb{D},\mathbb{S})=\{ (D^1,S^0,a_0)\}^k_{i=1}$, we denote $(\mathbb{D},\mathbb{S})^Q_{\mathcal{J}}$ also by $\R\mathcal{Z}_{Q,\mathcal{J}}$ which is the real analogue of $\mathcal{Z}_{Q,\mathcal{J}}$. Then we have the following corollary which generalizes
      Corollary~\ref{Cor:Real-Moment} and Theorem~\ref{Thm:Equivariant-Cohomology-Z-Q}.
      
  \begin{cor} \label{Cor:Real-Moment-General}
     Let $Q$ be a nice manifold with corners with facets $F_1,\cdots, F_m$. Then for any partition $\mathcal{J}=\{J_1,\cdots,J_k\}$ of $[m]$, we have
     $$  \mathbf{\Sigma}( \R \mathcal{Z}_{Q,\mathcal{J}} ) \simeq \bigvee_{\omega\subseteq [k]} 
           \mathbf{\Sigma}( Q\slash F_{\omega}), \ \ H^p(\R\mathcal{Z}_{Q,\mathcal{J}}) \cong \bigoplus_{\omega\subseteq [k]} H^p(Q, F_{\omega}), \ \forall p\in \Z.$$
\begin{itemize}
     \item The integral cohomology ring of $\R\mathcal{Z}_{Q,\mathcal{J}}$ is isomorphic as a graded ring to the ring $(\mathcal{R}^*_{Q,\mathcal{J}}, \cup)$
     where $\cup$ is the relative cup product
      $$ H^*(Q,F_{\omega}) \otimes H^{*}(Q,F_{\omega'}) \overset{\cup}{\longrightarrow}
    H^{*}(Q,F_{\omega\cup \omega'}), \ \forall\, \omega,\omega'\subseteq [k].$$ 
     
     \item There is a graded ring isomorphism from the equivariant $\Z_2$-cohomology ring of $\R\mathcal{Z}_{Q,\mathcal{J}}$ to $\Z_2^{\mathcal{J}}\langle Q\rangle$ by
     choosing $\mathrm{deg}(x_i)=1$ for all $1\leq i \leq k$.
     \end{itemize} 
   \end{cor}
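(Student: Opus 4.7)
The plan is to obtain both parts of the corollary as direct consequences of the preceding machinery, applying the partition-indexed generalization in Theorem~\ref{Thm:Main-General-J} to the specific data $(\mathbb{D},\mathbb{S})=\{(D^1,S^0,a_0)\}^k_{i=1}$ and then running the Borel-construction argument from the proof of Theorem~\ref{Thm:Equivariant-Cohomology-Z-Q} in the $(\Z_2)^k$-equivariant $\mathcal{J}$-setting. First I would unfold the definition $\R\mathcal{Z}_{Q,\mathcal{J}} = (D^1,S^0)^Q_{\mathcal{J}}$. Since every $n_i=0$ in this case, the hypothesis ``$n_i\geq 1$ for some $i\in \omega\cap\omega'$'' of Theorem~\ref{Thm:Main-General-J} is never satisfied, so the product $\Cup^{(D^1,S^0)}_{\mathcal{J}}$ on $\mathcal{R}^*_{Q,\mathcal{J}}$ coincides with the relative cup product $\cup$ on every pair $(\omega,\omega')$. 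The homotopy equivalence $\mathbf{\Sigma}(\R\mathcal{Z}_{Q,\mathcal{J}})\simeq\bigvee_{\omega\subseteq[k]}\mathbf{\Sigma}^{1+\sum_{i\in\omega}n_i}(Q\slash F_\omega)$ of Theorem~\ref{Thm:Main-General-J} collapses to $\bigvee_{\omega\subseteq[k]}\mathbf{\Sigma}(Q\slash F_\omega)$, which gives the stable decomposition and, by taking reduced cohomology and using $\widetilde{H}^p(Q/F_\omega)\cong H^p(Q,F_\omega)$ for $\omega\neq\varnothing$ together with $H^p(Q,F_\varnothing)=H^p(Q)$, the additive isomorphism stated in the corollary. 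The degree shifting disappears because all shifts are by $\sum_{i\in\omega}n_i=0$, so the ring isomorphism $(\mathcal{R}^*_{Q,\mathcal{J}},\cup)\to H^*(\R\mathcal{Z}_{Q,\mathcal{J}})$ is automatically degree-preserving.

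For the equivariant part I would imitate the proof of Theorem~\ref{Thm:Equivariant-Cohomology-Z-Q} after replacing the pair $(D^2,S^1)$ by $(D^1,S^0)$, the group $(S^1)^m$ by $(\Z_2)^k$, and the classifying space $\mathbb{C}P^\infty$ by $\R P^\infty$. Writing $\R\mathcal{Z}_{Q,\mathcal{J}}$ as the colimit of the diagram of spaces $(D^1,S^0)^{(f,L)}_{\mathcal{J}}$ indexed by $\mathcal{P}_Q$ from the proof of Theorem~\ref{Thm:Main-General-2}, the Borel construction becomes
\[
E(\Z_2)^k\times_{(\Z_2)^k}\R\mathcal{Z}_{Q,\mathcal{J}}=(S^\infty\times_{\Z_2}D^1,\,S^\infty\times_{\Z_2}S^0)^Q_{\mathcal{J}},
\]
and the componentwise homotopy equivalence of pairs $(S^\infty\times_{\Z_2}D^1,\,S^\infty\times_{\Z_2}S^0)\to(\R P^\infty,*)$ combined with Theorem~\ref{Thm:Homotopy-Colimit-Lemma} identifies this Borel construction with the partition-indexed polyhedral product $(\R P^\infty,*)^Q_{\mathcal{J}}$.

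Next, since $\{*\}$ is contractible, I would invoke the $\mathcal{J}$-analogue of Theorem~\ref{Thm:Stable-Decomp-A-Contract} (whose proof translates verbatim by the symbol correspondence used in Theorem~\ref{Thm:Main-General-2}) to get $\mathrm{colim}(\widehat{\mathbf{D}}^\omega_{(\R P^\infty,*)+})\simeq(F_{\cap\omega}\cup q_0)\wedge\bigwedge_{i\in\omega}\R P^\infty_{(i)}$, and then apply the $\mathcal{J}$-version of Theorem~\ref{Thm:Cohomology-Ring-Isom-X-A} to obtain a ring isomorphism via $\bigoplus_\omega\widehat{\Pi}^*_\omega$ with the product $\circledast$ defined by partial diagonals. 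Because $H^*(\R P^\infty;\Z_2)\cong\Z_2[x]$ is a free $\Z_2$-module, the K\"unneth formula gives
\[
\widetilde{H}^*\bigl(\mathrm{colim}(\widehat{\mathbf{D}}^\omega_{(\R P^\infty,*)+});\Z_2\bigr)\cong H^*(F_{\cap\omega};\Z_2)\otimes\bigotimes_{i\in\omega}\widetilde{H}^*(\R P^\infty_{(i)};\Z_2),
\]
and the reduced diagonal on $\R P^\infty$ sends $x^n$ to the evident cup product, exactly as in Example~\ref{Exam:CP-infty}. Then Lemma~\ref{Lem:Cross-Product-Pull-Back} computes $\circledast$ by the same formula as in the proof of Theorem~\ref{Thm:Equivariant-Cohomology-Z-Q}, which is precisely the defining product of $\Z_2^{\mathcal{J}}\langle Q\rangle$ under the identification $\bigotimes_{i\in\omega}\widetilde{H}^*(\R P^\infty_{(i)};\Z_2)=R^\omega_{\Z_2}$ with $\deg(x_i)=1$.

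The only genuine point of care, rather than a serious obstacle, will be checking that the $\mathcal{J}$-versions of Theorem~\ref{Thm:Stable-Decomp-A-Contract} and Theorem~\ref{Thm:Cohomology-Ring-Isom-X-A} hold without change; but this follows from the same symbol substitution table $J\subseteq[m]\leftrightarrow\omega\subseteq[k]$, $I_f\leftrightarrow I^{\mathcal{J}}_f$, $F_J\leftrightarrow F_\omega$, $F_{\cap J}\leftrightarrow F_{\cap\omega}$ used in the proof of Theorem~\ref{Thm:Main-General-2}, since neither the colimit/partial-diagonal machinery nor Theorem~\ref{Thm:Generalized-Stable-Decomp} depends on the panels $F_{J_i}$ being connected. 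Combining the two pieces yields the cohomology ring statement and the equivariant ring isomorphism, completing the proof of Corollary~\ref{Cor:Real-Moment-General}.
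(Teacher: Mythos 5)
Your proposal is correct and matches the paper's intended argument: the paper explicitly omits this proof, noting it is obtained from Corollary~\ref{Cor:Real-Moment} and Theorem~\ref{Thm:Equivariant-Cohomology-Z-Q} by the same symbol substitution ($J\subseteq[m]\leftrightarrow\omega\subseteq[k]$, $F_J\leftrightarrow F_\omega$, $(S^1)^m\leftrightarrow(\Z_2)^k$, $\mathbb{C}P^\infty\leftrightarrow\R P^\infty$) used for Theorem~\ref{Thm:Main-General-2}, which is precisely what you carry out. Your observations that all $n_i=0$ forces $\Cup^{(D^1,S^0)}_{\mathcal{J}}$ to be the relative cup product everywhere and kills the degree shifts, and that freeness of $H^*(\R P^\infty;\Z_2)$ supplies the K\"unneth identification needed for the topological face ring, are exactly the points the paper relies on.
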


     The proofs of Theorem~\ref{Thm:Main-General-J-1}, Theorem~\ref{Thm:Main-General-J-2} and Corollary~\ref{Cor:Real-Moment-General} are almost the same as their counterparts
     in Section~\ref{Sec:Polyhedral-Prod} and Section~\ref{Sec:Equivariant-Cohom}, hence omitted.
  
  For any partition $\mathcal{J}=\{ J_1,\cdots, J_k\}$ of $[m]$,
  \begin{itemize}
  \item We can think of $\mathcal{Z}_{Q,\mathcal{J}}$ as the quotient space of $\mathcal{Z}_Q$ by the canonical action of an
  $(m-k)$-dimensional subtorus $\mathbb{T}^{\mathcal{J}}$ of $(S^1)^m$ determined by (see~\eqref{Equ:glue-back-complex})
   $$\qquad\quad \ \{ \lambda(F_{j}) - \lambda(F_{j'}) \,|\, j, j' \ \text{belong to the same}\ J_i \ \text{for some}\  1\leq i \leq k \}  \subseteq \Z^m.$$

  \item Similarly, we can think of $\R\mathcal{Z}_{Q,\mathcal{J}}$ as the quotient space of $\R\mathcal{Z}_Q$ by the canonical action of a subgroup of rank $m-k$ in $(\Z_2)^m$.
  \end{itemize}
  
  Note that the canonical action of
  $\mathbb{T}^{\mathcal{J}}$ on $\mathcal{Z}_Q$ may not be free. But when the action is free, the integral equivariant cohomology ring of 
  $\mathcal{Z}_Q \slash \mathbb{T}^{\mathcal{J}} = 
  \mathcal{Z}_{Q,\mathcal{J}}$ is isomorphic to
 $\Z\langle Q \rangle$ by Theorem~\ref{Thm:Equiv-Free-Quotient}. So by Theorem~\ref{Thm:Main-General-Cohomology-2}, $\Z^{\mathcal{J}}\langle Q \rangle$ is isomorphic as a ring to $\Z\langle Q \rangle$ 
  in this case. But this ring isomorphism is not obvious from the algebraic definition of $\Z^{\mathcal{J}}\langle Q \rangle$ and $\Z\langle Q \rangle$.
  
  \begin{rem}
  For any partition $\mathcal{J}=\{ J_1,\cdots, J_k\}$ of $[m]$ with $k=\dim(Q)$, the
  $\mathcal{Z}_{Q,\mathcal{J}}$ and $\R\mathcal{Z}_{Q,\mathcal{J}}$ can be considered as a generalization of
  the \emph{pull-back from the linear model} (see~\cite[Example 1.15]{DaJan91}) in the study of quasitoric manifolds and small covers.
  \end{rem}
     
   \section*{Acknowledgment}
  This work is partially supported by the National 
 Natural Science Foundation of China (grant no.11871266) and 
 the PAPD (priority academic program development) of Jiangsu higher education institutions.  The author wants to thank Professor Tony Bahri for some
inspiring comment on an earlier version of the paper
and thank Feifei Fan for pointing out a sign problem in the paper.
The author also wants to thank the referee for some valuable suggestions.


\begin{thebibliography}{99}

 \bibitem{AntonBuch11}
 A.~Ayzenberg and V.~M.~Buchstaber, \emph{Nerve complexes and moment-angle spaces of convex polytopes}, Proc. Steklov Inst. Math. 275 (2011), no.~\textbf{1}, 15--46.
 
 \bibitem{AntonMasParkZeng17}
 A.~Ayzenberg, M.~Masuda, S.~Park and H.~Zeng, \emph{Cohomology of toric origami manifolds with acyclic proper faces}, 
 J. Symplectic Geom. 15 (2017), no.~\textbf{3}, 645--685.
 
 \bibitem{BBCG10}
   A.~Bahri, M.~Bendersky, F.~Cohen and S.~Gitler, 
    \emph{The Polyhedral Product Functor: a method of decomposition for moment-angle complexes, 
     arrangements and related spaces},
      Adv. Math.~\textbf{225} (2010) no.~\textbf{3}, 1634--1668. 
      
\bibitem{BBCG12}
   A.~Bahri, M.~Bendersky, F.~Cohen and S.~Gitler, 
   \emph{Cup-products for the polyhedral product functor}, Math.~Proc.~Cambridge~Philos.~Soc. 153 (2012), no.~\textbf{3}, 457--469.    

\bibitem{BBCG17}
 A.~Bahri, M.~Bendersky, F.~Cohen and S.~Gitler,
\emph{A spectral sequence for polyhedral products},
Adv. Math. 308 (2017), 767--814.

\bibitem{BBCG20}
 A.~Bahri, M.~Bendersky, F.~Cohen and S.~Gitler, \emph{A Cartan formula for the cohomology of polyhedral products and its application to the ring structure},
  arXiv:2009.06818.
 

\bibitem{BasBuchPanov04}
 I.~V.~Baskakov, V.~M.~Buchstaber and T.~E.~Panov,  \emph{Algebras of cellular cochains, and torus actions}, (Russian) Uspekhi Mat. Nauk 59 (2004), no.~\textbf{3} 
 (357), 159--160; translation in Russian Math. Surveys 59 (2004), no.~\textbf{3}, 562--563. 
 
\bibitem{BP00}
 V.~M.~Buchstaber and T.~E.~Panov, \emph{Actions of tori, combinatorial topology and homological algebra}, Russian
Math. Surveys, 55 (2000), 825--921. 
 
\bibitem{BP02} 
  V.~M.~Buchstaber and T.~E.~Panov,
 \emph{Torus actions and their applications in topology and
 combinatorics}, University Lecture Series, \textbf{24}.
 American Mathematical Society, Providence, RI, 2002.   

 \bibitem{BP15}
V.~M.~Buchstaber and T.~E.~Panov, \emph{Toric Topology},
 Mathematical Surveys and Monographs, Vol.~204, 
 American Mathematical Society, Providence, RI, 2015.
 
 \bibitem{CaiLi17}
   L.~Cai, \emph{On products in a real moment-angle manifold}, J. Math. Soc. Japan 69 (2017), no.~\textbf{2}, 503--528.
   
  \bibitem{CanGuilPires11}  
  A.~Cannas da Silva, V.~Guillemin and A.~R.~Pires, \emph{Symplectic origami},
Int. Math. Res. Not. IMRN 2011, no.~\textbf{18}, 4252--4293.    
 
\bibitem{Chou64}
   W.~Chow, \emph{On unmixedness theorem}.
Amer. J. Math. 86 (1964), 799--822.

\bibitem{Da83}
 M.~W.~Davis, 
 \emph{Groups generated by reflections and aspherical manifolds not covered by
Euclidean space}, Ann. of Math. 117 (1983), 293--324. 

\bibitem{Da87}
 M.~W.~Davis, \emph{The homology of a space on which a reflection group acts}, Duke Math. J. 55 (1987), no.~\textbf{1}, 97--104.  
 
\bibitem{Davisbook08}
M.~W.~Davis, \emph{The geometry and topology of Coxeter groups}, London Mathematical Society Monographs Series, 32. Princeton University Press, Princeton, NJ, 2008. 
  
\bibitem{DaJan91}  M.~W.~Davis and T.~Januszkiewicz, \textit{Convex polytopes,
Coxeter orbifolds and torus actions}, Duke Math. J. \textbf{62}
(1991), no.~\textbf{2}, 417--451.

\bibitem{Franz06}
  M.~Franz, \emph{On the integral cohomology of smooth toric varieties}, Proc. Steklov Inst. Math. 252 (2006), 53--62.
  
\bibitem{FrobHoa92}
  R.~Fr\"oberg and L.~Hoa, \emph{Segre products and Rees algebras of face rings}, Comm. Algebra 20 (1992), no. 11, 3369--3380.

  
\bibitem{Hatcher02}
   A.~Hatcher, \emph{Algebraic topology}, Cambridge University Press, Cambridge, 2002. 
     
   
\bibitem{Hoa88}
  L.~Hoa, \emph{On Segre products of affine semigroup rings}, Nagoya Math. J. 110 (1988), 113--128.   
  
\bibitem{HolmPires13}
  T.~Holm and A.~R.~Pires, \emph{The topology of toric origami manifolds},
    Math. Res. Lett. 20 (2013), no.~\textbf{5}, 885--906.   
     
   
\bibitem{James55}
  I.~James, \emph{Reduced product spaces}, 
   Ann. of Math. 62 (1955), 170--197.

\bibitem{Jan20}
  T.~Januszkiewicz, \emph{Actions with section},
  a talk in ``Workshop on Torus Actions in Topology'' held at the Fields Institute for
Research in Mathematical Sciences, May 11--15 (2020).
http://www.fields.utoronto.ca/talks/Actions-section

\bibitem{John83}
  F.~E.~A.~Johnson, 
  \emph{On the triangulation of stratified sets and singular varieties},
Trans. Amer. Math. Soc. 275 (1983), no.~\textbf{1}, 333--343.
 
\bibitem{LuPanov11}
   Z.~L\"u and T.~E.~Panov, \emph{Moment-angle complexes from simplicial posets},
     Cent. Eur. J. Math. 9 (2011), no.~\textbf{4}, 715--730.
     
\bibitem{MasPanov06}
 M.~Masuda and T.~Panov, \textit{On the cohomology of torus manifolds}, Osaka J.~Math.~43
  (2006), no.~\textbf{3}, 711--746.     
     
\bibitem{MilSturm05}
   E.~Miller and B.~Sturmfels, \emph{Combinatorial Commutative Algebra}, Graduate Texts in Mathematics~\textbf{227} (2005), Springer, Berlin.    
     
\bibitem{PodSark15} 
 M.~Poddar and S.~Sarkar, \emph{A class of torus manifolds with nonconvex orbit space}, Proc. Amer. Math. Soc. 143 (2015), no.~\textbf{4}, 1797--1811. 
 
\bibitem{Stanley07}
   R.~Stanley, \emph{Combinatorics and commutative algebra}, 2nd edition. Birkh\"auser Boston, 2007.
   
\bibitem{Vogt73}   
R.~Vogt, \emph{Homotopy limits and colimits},
 Math. Z. 134 (1973) 11--52.   
  
\bibitem{WangZheng15}
  X.~J.~Wang, Q.~B.~Zheng, \emph{The homology of simplicial complements and the cohomology of polyhedral products},
Forum Math. 27 (2015), no.~\textbf{4}, 2267--2299.  
  
\bibitem{WelkZiegZiv99}
V.~Welker, G.~Ziegler, R.~\v{Z}ivaljevi\'{c}, \emph{Homotopy colimits-comparison lemmas for combinatorial applications},
J. Reine Angew. Math. 509 (1999) 117--149. 


\bibitem{Whitehead56}
 G.~W.~Whitehead, \emph{Homotopy groups of joins and unions}, Trans. Amer. Math. Soc. 83 (1956), 55--69.  
 
\bibitem{Yos11}
 T.~Yoshida, \emph{Local torus actions modeled on the standard representation},  Adv. Math., 227 (2011),
1914--1955. 
  
\bibitem{Yu19}
  L.~Yu, \emph{On Hochster's formula for a class of quotient spaces of moment-angle complexes}, 
  Osaka J. Math. 56 (2019), no.~\textbf{1},  33--50. 
  
  
\bibitem{Zheng16}
  Q.~B.~Zheng, \emph{The cohomology algebra of polyhedral product spaces}, J. Pure Appl. Algebra 220 (2016), no.~\textbf{11}, 3752–3776.
  
  
\bibitem{ZiegZiv93}
G.~Ziegler, R.~\v{Z}ivaljevi\'{c}, \emph{Homotopy types of sub-space arrangements via diagrams of spaces}, Math. Ann. 295 (1993), 527--548.  

  
\end{thebibliography}
\end{document}